\theoremstyle{plane}
\newtheorem{thm}{Theorem}[section]
\newtheorem{prop}[thm]{Proposition}%[section]
\newtheorem{lem}[thm]{Lemma}%[section]
\newtheorem{cor}[thm]{Corollary}%[section]
\newtheorem{fact}[thm]{Fact}%[section]
\theoremstyle{definition}
\newtheorem{dfn}[thm]{Definition}%[section]
\newtheorem{ex}[thm]{Example}%[section]
\theoremstyle{remark}
\newtheorem{rem}[thm]{Remark}%[section]
\newcommand{\rank}{\operatorname{rank}}
\newcommand{\im}{\operatorname{Im}}
\newcommand{\R}{\mathbb{R}}
\newcommand{\bS}{\mathbb{S}}
\newcommand{\bV}{\bm{V}}
\newcommand{\e}{\bm{e}}
\newcommand{\x}{\bm{x}}
\newcommand{\y}{\bm{y}}
\newcommand{\F}{\mathcal{F}}
\newcommand{\cH}{\mathcal{H}}
\newcommand{\Sig}{\Sigma}
\renewcommand{\phi}{\varphi}
\newcommand{\eps}{\varepsilon}
\newcommand{\what}{\widehat}
\newcommand{\til}{\tilde}
\newcommand{\wtil}{\widetilde}
\newcommand{\inner}[2]{\left\langle{#1},{#2}\right\rangle}
\numberwithin{equation}{section}
\title[Focal surfaces of fronts]
{Focal surfaces of fronts associated to unbounded principal curvatures}
\author[K. Teramoto]{Keisuke Teramoto}
\address{Department of Mathematics, 
Hiroshima University, Higashi-Hiroshima 739-8526, Japan}
\email{kteramoto@hiroshima-u.ac.jp}
\thanks{The author was partially supported by JSPS KAKENHI Grant Numbers JP19K14533, JP22K13914 and JP20H01801, 
and the Japan-Brazil bilateral project JPJSBP1 20190103.}
\subjclass[2020]{53A05, 53A55, 57R45}
\keywords{Front, Focal surface, Singularity, Principal curvature, Sub-parabolic point, Gaussian curvature}
\date{\today}
\begin{document}
%%%%%TEXT START%%%%%

\maketitle

\begin{abstract}
We study focal surfaces of (wave) fronts associated to unbounded principal curvatures 
near non-degenerate singular points of initial fronts. 
We give characterizations of singularities of those focal surfaces 
in terms of types of singularities and geometrical properties of initial fronts. 
Moreover, we investigate behavior of the Gaussian curvature of the focal surface.
%\\
%{\sc R\'{e}sum\'{e}}. 
%Dans cet article, nous \'{e}tudions les surfaces focales des fronts par rapport 
%\`{a} la courbure principale non born\'{e}e pr\`{e}s des points singuliers non d\'{e}g\'{e}n\'{e}r\'{e}s des fronts initiaux.
%Nous donnons des caractérisations des singularit\'{e}s de ces surfaces focales par 
%types de singularit\'{e}s et propri\'{e}t\'{e}s g\'{e}om\'{e}triques des fronts initiaux.
%De plus, nous \'{e}tudions le comportement de la courbure gaussienne de la surface focale.
\end{abstract}
%\tableofcontents

%%%%%SECTION 1%%%%%
\section{Introduction}\label{sec:intro}
{\it Focal surfaces} (or {\it caustics}) of regular surfaces in the Euclidean $3$-space $\R^3$ 
can be characterized by several ways: loci of centers of principal curvature spheres 
of initial surfaces, singular value sets of the normal congruence, 
and the bifurcation set of the family of distance squared functions for instance (cf. \cite{ifrt,ist-line,porteous-normal,bgt-focal}). 
Although initial surfaces have no singular points, 
their focal surfaces have singularities in general (\cite{agv,arnold-redbook,zak,ifrt,porteous-normal,porteous-book}). 
It is known that types of (co)rank one singularities on focal surfaces correspond to 
geometric properties arising from principal curvatures of initial surfaces (\cite{ifrt,porteous-normal,porteous-book}). 
Investigating singularities of focal surfaces, one might have new geometrical properties of surfaces 
(\cite{bw-folding,morris,bgt-crest,bgt-focal,ifrt}). 

On the other hand, there are classes of surfaces with singular points 
called {\it frontals} and ({\it wave}) {\it fronts}. 
These surfaces admit smooth unit normal vector field even at singular points. 
Owing to this property, frontals and fronts can be considered as 
a kind of generalization of regular surfaces (i.e., immersions) in $\R^3$.
Recently, there are several studies on frontals and fronts from differential geometric viewpoint, 
and various geometric invariants at singular points are introduced 
(\cite{ft-framed,fsuy-maximal,ist-flat-ce,ms-ce,msuy,suy-front,os-folded,hs-rhamphoid,nuy-cuspidal,
ot-flatce,hnuy-isom,hhnsuy,fukui2020}). 
Moreover, relation among behavior of the curvatures of frontals and fronts and geometric invariants are considered 
(\cite{msuy,hs-rhamphoid,suy-front,tera3,st-behavior}). 
In particular, for a front, it is known that 
one principal curvature can be extended as a bounded $C^\infty$ function 
near a non-degenerate singular point, and another is unbounded near such a singular point (\cite{mura-ume,tera3,tito-2020}). 
Although one principal curvature of a front is unbounded near a non-degenerate singular point, 
the radius function (i.e., the reciprocal) of it 
can be extended as a $C^\infty$ function near the singular point. 
This fact plays a crucial role to study focal surfaces of fronts with bounded Gaussian curvature 
because the bounded principal curvature vanishes on the set of singular points of such fronts, 
and hence we cannot define corresponding focal surfaces near such points (cf. \cite{mura-ume,ku-flattori,kruy-flatfocal,Roitman-flat}).
%For instance, focal surfaces of fronts with bounded Gaussian curvature such as 
%flat fronts in $\R^3$ are essentially those surfaces 
%with respect to unbounded principal curvatures (cf. \cite{mura-ume,ku-flattori,kruy-flatfocal}).

In this paper, we investigate singularities and geometrical properties of focal surfaces of fronts 
associated to unbounded principal curvatures. 
We note that characterizations of singularities on focal surfaces 
relative to bounded principal curvatures are given in \cite{tera2}. 
Moreover, if the initial front has a cuspidal edge, 
then the focal surface associated to the unbounded principal curvature 
is regular at the corresponding point (\cite{tera2}). 
Thus we consider focal surfaces of fronts with 
singular points of the second kind, 
which are classes of non-degenerate singular points of fronts. 
A swallowtail singularity is a typical example. %of a singular point of the second kind of a front in $3$-space. 
To give characterizations of singularities of the focal surface, 
we recall behavior of principal curvatures and 
principal vectors around a non-degenerate singular point in Section \ref{sec:principal}. 
By observing the unbounded principal curvature and corresponding principal vector, 
we extend a concept of {\it sub-parabolic points} to singular points of the second kind on fronts in Section \ref{sec:subpara}. 
In particular, we show relation between behavior of the Gaussian curvature and sub-parabolic points 
of a front (Propositions \ref{prop:subp-rational} and \ref{prop:subpara}). 

In Section \ref{sec:focal}, we study a focal surface of a front associated to the unbounded principal curvature. 
We give characterizations of singularities on focal surfaces 
by geometrical properties of initial fronts (Theorem \ref{thm:singularity-C} and Proposition \ref{prop:sing-C-deg}). 
Furthermore, we study contact between singular sets of the initial front 
and the focal surface associated to the unbounded principal curvature. 
We characterize types of singularities of the focal surface in terms of the contact order of these curves (Proposition \ref{prop:contact}). 
Finally, we observe that the behavior of the Gaussian curvature of the focal surface. 
Especially, for the case of a singular point of the second kind, 
we give characterization for the rational boundedness of the Gaussian curvature of the focal surface 
by a certain geometrical property of the initial front (Theorem \ref{thm:r-bounded-focal}).% and Corollary \ref{cor:kn-C}). 

%%%%%SECTION 2%%%%%
\section{Preliminaries}\label{sec:prelim}
We recall some notions on fronts. 
For details, see \cite{arnold-redbook,suy-front,krsuy-flat,fsuy-maximal,ifrt,agv}.

\subsection{Fronts}
Let $(\Sig;u,v)$ be a domain in the $(u,v)$-plane $\R^2$. 
Let $f\colon\Sig\to\R^3$ be a $C^\infty$ map. 
Then $f$ is said to be a {\it frontal} if there exists a $C^\infty$ map 
$\nu\colon\Sig\to\bS^2$ such that $\inner{df_q(\bm{X})}{\nu(q)}=0$ for any $q\in\Sig$ and $\bm{X}\in T_q\Sig$, 
where $\bS^2$ is the standard unit sphere in $\R^3$ 
and $\inner{\cdot}{\cdot}$ is the canonical inner product on $\R^3$. 
Moreover, a frontal $f$ is a {\it front} if 
the pair $(f,\nu)\colon\Sig\to\R^3\times\bS^2$ gives an immersion. 
We call $\nu$ a {\it unit normal vector field} or a {\it Gauss map} of $f$. 

We fix a frontal $f$. 
A point $p\in\Sig$ is called a {\it singular point} of $f$ if $f$ is not an immersion at $p$. 
We denote by $S(f)$ the set of singular points of $f$. 
Set a function $\lambda\colon\Sig\to\R$ by 
\begin{equation}\label{eq:lambda}
\lambda(u,v)=\det(f_u,f_v,\nu)(u,v)\quad (f_u=\partial f/\partial u,\ f_v=\partial f/\partial v),
\end{equation} 
where $\det$ is the determinant of $3\times3$ matrices. 
Then one can check that $\lambda^{-1}(0)=S(f)$ holds by the definition. 
We call the function $\lambda$ the {\it signed area density function} of $f$. 

Take a singular point $p\in S(f)$ of a frontal $f$. 
Then $p$ is said to be {\it non-degenerate} (resp. {\it degenerate}) 
if $(\lambda_u(p),\lambda_v(p))\neq(0,0)$ (resp. $(\lambda_u(p),\lambda_v(p))=(0,0)$) holds. 
For a non-degenerate singular point $p$ of $f$, 
there exist an open neighborhood $U(\subset\Sig)$ of $p$ 
and a regular curve $\gamma\colon(-\eps,\eps)\ni t\mapsto\gamma(t)\in U$ $(\eps>0)$ such that 
$\gamma(0)=p$ and $\lambda(\gamma(t))=0$ on $U$ 
by the implicit function theorem. 
Moreover, since a non-degenerate singular point $p$ is a corank one singular point (i.e., $\rank df_p=1$), 
there exists a never-vanishing vector field $\eta$ on $U$ such that 
$df_q(\eta_q)=0$ for any $q\in S(f)\cap U$ ($\eta_q\in T_qU$). 
We call $\gamma$ and $\eta$ the {\it singular curve} for $f$ and a {\it null vector field}, respectively. 
We remark that one can take a null vector field $\eta$ of a front near a corank one singular point $p$. 

A non-degenerate singular point $p$ is said to be of the {\it first kind} 
if $\gamma'=d\gamma/dt$ and $\eta$ are linearly independent at $p=\gamma(0)$. 
Otherwise, it is said to be of the {\it second kind}. 
Let $p$ be a singular point of the second kind of a frontal $f$. 
Then $p$ is said to be {\it admissible} if 
for each open neighborhood $U$ of $p$, 
the intersection $S(f)\cap U$ contains a singular point of the first kind. 
Otherwise, we call $p$ {\it non-admissible}. 
We say that a non-degenerate singular point $p$ is {\it admissible} 
if $p$ is either of the first kind or the admissible second kind.

\begin{dfn}
Let $f\colon(\Sig,p)\to\R^3$ be a $C^\infty$ map germ and $p$ a singular point of $f$. 
Then 
\begin{enumerate}
\item $f$ is a {\it cuspidal edge} at $p$ 
if $f$ is $\mathcal{A}$-equivalent to the germ $(u,v)\mapsto(u,v^2,v^3)$ at the origin. 
\item $f$ is a {\it swallowtail} at $p$ 
if $f$ is $\mathcal{A}$-equivalent to the germ $(u,v)\mapsto(u,4v^3+2uv,3v^4+uv^2)$ at the origin.
\item $f$ is a {\it cuspidal butterfly} at $p$ 
if $f$ is $\mathcal{A}$-equivalent to the germ $(u,v)\mapsto(u,5v^4+2uv,4v^5+uv^2)$ at the origin.
\item $f$ is a {\it cuspidal lips} at $p$ if $f$ is $\mathcal{A}$-equivalent to the germ 
$(u,v)\mapsto(u,3v^4+2u^2v^2,v^3+u^2v)$ at the origin.
\item $f$ is a {\it cuspidal beaks} at $p$ if $f$ is $\mathcal{A}$-equivalent to the germ 
$(u,v)\mapsto(u,3v^4-2u^2v^2,v^3-u^2v)$ at the origin.
\end{enumerate}
Here, two map germs $f_1,f_2\colon(\R^2,0)\to(\R^3,0)$ are {\it $\mathcal{A}$-equivalent} 
if there exist diffeomorphism germs $\phi\colon(\R^2,0)\to(\R^2,0)$ on the source 
and $\Phi\colon(\R^3,0)\to(\R^3,0)$ on the target 
such that $f_2=\Phi\circ f_1\circ \phi^{-1}$. 
\end{dfn}

A cuspidal edge, a swallowtail and a cuspidal butterfly are non-degenerate front singular points. 
Moreover, a cuspidal edge and a swallowtail are generic singularities of fronts in $\R^3$, 
and a cuspidal lips/beaks and a cuspidal butterfly are generic corank one singularities of 
one-parameter bifurcation of fronts (cf. \cite{agv,zak}). 
Further, a cuspidal edge is a singular point of the first kind, and 
a swallowtail and a cuspidal butterfly are of the admissible second kind (cf. \cite{msuy}). 
%Thus these are admissible singularities of fronts. 
For these singular points, the following characterizations are known. 

\begin{fact}[{\cite{krsuy-flat,suy-ak,is-mandala,ist-horoflat}}]\label{fact:crit}
Let $f\colon\Sig\to\R^3$ be a front and $p\in\Sig$ a corank one singular point of $f$. 
Then we have the following.
\begin{enumerate}
\item $f$ is a cuspidal edge at $p$ if and only if $\eta\lambda(p)\neq0$.
\item $f$ is a swallowtail at $p$ if and only if $d\lambda(p)\neq0$, $\eta\lambda(p)=0$ and $\eta\eta\lambda(p)\neq0$.
\item $f$ is a cuspidal butterfly at $p$ if and only if $d\lambda(p)\neq0$,
$\eta\lambda(p)=\eta\eta\lambda(p)=0$ and $\eta\eta\eta\lambda(p)\neq0$.
\item $f$ is a cuspidal lips at $p$ if and only if $d\lambda(p)=0$ and $\det(\cH_{\lambda})(p)>0$, 
that is, $\lambda$ has a Morse type singularity of index zero or two at $p$.
\item $f$ is a cuspidal beaks at $p$ if and only if $d\lambda(p)=0$, $\eta\eta\lambda(p)\neq0$ and $\det(\cH_{\lambda})(p)<0$, 
that is, $\lambda$ has a Morse type singularity of index one and $\eta\eta\lambda\neq0$ at $p$.
\end{enumerate}
Here, $\lambda$ is the signed area density function of $f$ as in \eqref{eq:lambda}, 
$\eta$ is a null vector field, $\eta\lambda$ means the directional derivative of $\lambda$ in the direction $\eta$ 
and $\det(\cH_{\lambda})$ is the Hessian of $\lambda$. 
\end{fact}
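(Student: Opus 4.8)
The plan is to reduce $f$ to an explicit normal form in adapted coordinates and then to recognize the $\mathcal{A}$-equivalence class from the vanishing orders of $\lambda$ along the null direction. Since $p$ is a corank one singular point, I would first choose coordinates $(u,v)$ centered at $p$ in which the null vector field is $\eta=\partial_v$; this is possible because $\eta$ is a nowhere-vanishing vector field near $p$. In these coordinates $f_v(p)=\0$, and with the identity $\eta\lambda=\lambda_v$ the directional-derivative conditions in the statement become conditions on the partial $v$-derivatives of $\lambda$ at the origin. The front hypothesis — that $(f,\nu)$ is an immersion — forces $\nu_v(p)\neq\0$ (otherwise $(f_v,\nu_v)(p)=\0$), and this is what controls the target direction transverse to $f_u(p)$ and lets the Taylor coefficients of $f$ be normalized.

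For (1), the condition $\eta\lambda(p)\neq0$ says $\lambda_v(p)\neq0$, i.e. the singular curve $\{\lambda=0\}$ is transverse to the null direction, so $p$ is of the first kind. I would then expand $f$ to low order, use $f_v(p)=\0$ together with the front condition to see that $\{f_u(p),f_{vv}(p)\}$ spans the $2$-plane orthogonal to $\nu(p)$, and apply source and target diffeomorphisms to bring $f$ to $(u,v^2,v^3)$; this is the standard cuspidal-edge normalization.

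For (2) and (3), the vanishing $\eta\lambda(p)=0$ means the null direction is tangent to the singular curve, so $p$ is of the second kind; here the relevant quantity is the order of vanishing of $\eta\lambda$ measured along $\eta$. The conditions $\eta\eta\lambda(p)\neq0$ (resp. $\eta\lambda(p)=\eta\eta\lambda(p)=0$ and $\eta\eta\eta\lambda(p)\neq0$) pin down the contact order and, after normalizing $\lambda$ and $f$ order by order, identify $f$ with the swallowtail (resp. cuspidal butterfly) normal form. The recognition step — that these finitely many jet-level conditions already determine the $\mathcal{A}$-class — is where I would invoke the determinacy and recognition results behind the cited references rather than recompute the coordinate changes by hand.

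For the degenerate cases (4) and (5), $p$ satisfies $d\lambda(p)=0$, so $\lambda$ has a critical point at $p$; by the Morse lemma the non-vanishing Hessian makes $\lambda$ a non-degenerate quadratic form up to a coordinate change, and its signature distinguishes the two cases: $\det(\cH_{\lambda})(p)>0$ (definite, Morse index $0$ or $2$) yields a cuspidal lips, while $\det(\cH_{\lambda})(p)<0$ (index one) together with $\eta\eta\lambda(p)\neq0$ yields a cuspidal beaks. The main obstacle throughout is precisely the second-kind and degenerate analysis: the null vector field is no longer transverse to the singular set, so one must control the interaction of $\eta$ with the Morse normalization of $\lambda$ and verify that the stated conditions are exactly those needed for the recognition of each $\mathcal{A}$-equivalence class.
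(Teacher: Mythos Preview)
The paper does not prove this statement: it is recorded as a \emph{Fact} with citations to \cite{krsuy-flat,suy-ak,is-mandala,ist-horoflat} and no argument is given in the text. So there is no ``paper's own proof'' to compare against; the result is imported wholesale from the literature.

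Your sketch is a reasonable outline of how those references actually proceed: choose coordinates in which $\eta=\partial_v$, use the front condition $\nu_v(p)\neq\0$ to control the transverse direction, read off the contact order of $\lambda$ along $\eta$, and then invoke finite determinacy/recognition to match each normal form. That is the right architecture. The honest gap you yourself flag is the ``recognition step'': verifying that the stated jet conditions are \emph{exactly} (necessary and sufficient) the ones that pin down each $\mathcal{A}$-class requires the explicit unfolding-theoretic arguments in the cited papers (e.g.\ the $A_k$-criteria in \cite{suy-ak} and the lips/beaks criteria in \cite{is-mandala,ist-horoflat}), and your proposal defers to those rather than supplying them. For a self-contained proof you would need to carry out the coordinate changes in each case, including checking that for the beaks the extra condition $\eta\eta\lambda(p)\neq0$ is genuinely needed to exclude more degenerate bifurcations; as a citation-backed sketch, however, what you wrote is in line with how the result is established.
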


%We note that criteria for other singular points of frontals and fronts are also known (\cite{fsuy-maximal,hks-cmc}). 

\subsection{Geometric invariants} 
We recall geometric invariants of fronts. 
\subsubsection{Geometric invariants of cuspidal edges}
First we consider the case of cuspidal edges. 
Let $f\colon\Sig\to\R^3$ be a front, $\nu$ its Gauss map and $p$ a cuspidal edge of $f$. 
Let $\gamma(t)$ be a singular curve passing through $p$ and $\eta$ a null vector field of $f$. 
Then one can define the following geometric invariants: the {\it singular curvature} $\kappa_s$ (\cite{suy-front}), 
the {\it limiting normal curvature} $\kappa_\nu$ (\cite{suy-front,msuy}), 
the {\it cuspidal curvature} $\kappa_c$ (\cite{msuy}) 
and the {\it cuspidal torsion} $\kappa_t$ (\cite{ms-ce}).
We note that $\kappa_s$ is an intrinsic invariant and its sing has a geometrical meaning (see \cite{suy-front,hhnsuy}). 
It is known that $\kappa_c$ does not vanish 
when $\gamma$ consists of cuspidal edges (\cite{msuy,ms-ce}). 
We remark that these invariants can be defined at singular points of the first kind for frontals but not fronts. 
In such cases, $\kappa_c$ vanishes at non-front singular points (cf. \cite[Proposition 3.11]{msuy}).

The limiting normal curvature $\kappa_\nu$ relates to the boundedness of the Gaussian curvature of a front with a cuspidal edge. 
In fact, the Gaussian curvature $K$ of a front $f$ is bounded on a sufficiently small neighborhood $U$ 
of a cuspidal edge $p$ if and only if $\kappa_\nu$ vanishes along the singular curve $\gamma$ through $p$ (\cite[Theorem 3.9]{msuy}). 
In this case, $K$ can be extended as a $C^\infty$ function on $U$. 
Moreover, the following property holds. 
\begin{fact}[{\cite[Remark 3.19]{msuy},\cite[Theorem 1.9]{fukui2020}}]\label{fact:K-bounded}
Let $f$ be a front in $\R^3$ and $p$ a cuspidal edge. 
Let $K$ be the Gaussian curvature of $f$ defined on the set of regular points of $f$. 
Suppose that $K$ is bounded on a sufficiently small neighborhood $U$ of $p$. 
Then $K$ satisfies
\begin{equation}\label{eq:K-ce}
4K=-4\kappa_t^2-\kappa_s\kappa_c^2
\end{equation}
at $p$.
\end{fact}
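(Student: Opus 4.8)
I would prove \eqref{eq:K-ce} by a direct computation in a strongly adapted coordinate system, followed by matching the leading Taylor coefficients of the fundamental forms with the invariants $\kappa_s,\kappa_c,\kappa_t,\kappa_\nu$. Choose coordinates $(u,v)$ centred at $p$ so that $S(f)=\{v=0\}$, the null vector field is $\eta=\partial_v$, and $u$ is the arc-length parameter of the singular image $\gamma(u)=f(u,0)$; thus $f_v(u,0)=\0$ and $|f_u(u,0)|=1$. Writing $E,F,G$ and $L,M,N$ for the coefficients of the first and second fundamental forms on the regular set, one has $EG-F^2=\lambda^2$, and non-degeneracy gives a simple zero $\lambda=v\,\tilde\lambda$ with $\tilde\lambda(u,0)=\det(f_u,f_{vv},\nu)(u,0)\neq0$. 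Hence
\begin{equation}\label{eq:K-quot}
K=\frac{LN-M^2}{\lambda^2}=\frac{LN-M^2}{v^2\,\tilde\lambda^2}.
\end{equation}

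The first step is to record the low-order behaviour of $L,M,N$ in $v$. Differentiating the identity $\inner{f_v}{\nu}\equiv0$ in $u$ and in $v$ and setting $v=0$ (where $f_v=\0$) yields $M(u,0)=\inner{f_{uv}}{\nu}(u,0)=0$ and $N(u,0)=\inner{f_{vv}}{\nu}(u,0)=0$, while $L(u,0)=\inner{f_{uu}}{\nu}(u,0)=\kappa_\nu$ along $\gamma$. Plugging the expansions $L=L_0+L_1v+\cdots$, $M=M_1v+\cdots$, $N=N_1v+\cdots$ into \eqref{eq:K-quot} gives, to leading order, $K=\kappa_\nu N_1/(\tilde\lambda_0^2\,v)+O(1)$ with $\tilde\lambda_0=\tilde\lambda(u,0)$. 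Since $N_1\neq0$ (it carries the non-vanishing cuspidal curvature), $K$ is bounded near $p$ exactly when $\kappa_\nu\equiv0$ along $\gamma$, recovering the criterion of \cite[Theorem 3.9]{msuy} quoted above. Imposing $\kappa_\nu=0$, i.e. $L_0=0$, the numerator of \eqref{eq:K-quot} becomes $(L_1N_1-M_1^2)v^2+O(v^3)$, so
\begin{equation}\label{eq:K-limit}
K(p)=\lim_{v\to0}K=\frac{L_1N_1-M_1^2}{\tilde\lambda_0^2}.
\end{equation}

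The heart of the proof is to identify the three coefficients $L_1=\partial_v\inner{f_{uu}}{\nu}|_{v=0}$, $N_1=\partial_v\inner{f_{vv}}{\nu}|_{v=0}$ and $M_1=\partial_v\inner{f_{uv}}{\nu}|_{v=0}$ with the geometric invariants. Using the definitions of \cite{suy-front,msuy,ms-ce} in the present adapted frame, I expect $N_1$ to be proportional to the cuspidal curvature $\kappa_c$, $L_1$ to combine the singular curvature $\kappa_s$ with $\kappa_c$, and $M_1/\tilde\lambda_0$ to equal the cuspidal torsion $\kappa_t$. Substituting these into \eqref{eq:K-limit} should turn $-M_1^2/\tilde\lambda_0^2$ into $-\kappa_t^2$ and $L_1N_1/\tilde\lambda_0^2$ into $-\tfrac14\kappa_s\kappa_c^2$, yielding \eqref{eq:K-ce}. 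The main obstacle is precisely this last identification: one must differentiate the orthogonality relations $\inner{f_u}{\nu}=\inner{f_v}{\nu}=0$ up to second order, use $f_v(u,0)=\0$ and $|f_u(u,0)|=1$ to eliminate the auxiliary derivatives of $\nu$, and then reorganise the surviving third-order data of $f$ so that the mixed coefficient assembles into the torsion square while the diagonal product assembles into $\kappa_s\kappa_c^2$ with the correct factor $\tfrac14$. Keeping careful track of signs and of the normalisation of $\tilde\lambda_0$ is the delicate part; the remainder is routine Taylor expansion.
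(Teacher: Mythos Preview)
The paper does not prove this statement; it is recorded as a \emph{Fact} with explicit citations to \cite[Remark 3.19]{msuy} and \cite[Theorem 1.9]{fukui2020}, so there is no in-paper argument to compare against. Your strategy---expand $L,M,N$ to first order in $v$ in a (special) adapted coordinate system, observe that boundedness of $K$ forces $L_0=\kappa_\nu\equiv0$, and then read off $K(p)=(L_1N_1-M_1^2)/\tilde\lambda_0^2$---is exactly the route taken in the cited references, and the preliminary computations you give (the vanishing of $M,N$ on $v=0$ and the form of the limit) are correct.

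However, as written this is not yet a proof: the entire content of \eqref{eq:K-ce} lies in the identifications you describe only as ``I expect''. You still need to show, in special adapted coordinates, that $M_1/\tilde\lambda_0=\kappa_t$, that $N_1$ equals (a specific multiple of) $\kappa_c$, and---the subtlest point---that $L_1N_1/\tilde\lambda_0^2=-\tfrac14\kappa_s\kappa_c^2$. The last identity is not obvious: $L_1=\partial_v\langle f_{uu},\nu\rangle|_{v=0}$ involves $\langle f_{uuv},\nu\rangle$ and $\langle f_{uu},\nu_v\rangle$, and one must use the constraint $\kappa_\nu\equiv0$ along the whole $u$-axis (not just at $p$) together with the definitions of $\kappa_s$ and $\kappa_c$ to extract the product $\kappa_s\kappa_c^2$ with the correct sign and factor. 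Until those three identities are actually established, the argument is a plausible outline rather than a proof; the references you would be reproducing carry out precisely these computations.
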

%We will see relationships between the Gaussian curvature of a front and of the focal surface.

On the other hand, we can take a coordinate system $(U;u,v)$ around $p$ satisfying the following properties: 
\begin{enumerate}
\item the $u$-axis is the singular curve,
\item $\partial_v$ gives a null vector field, and 
\item there are no singular points other than the $u$-axis.
\end{enumerate}
We call this local coordinate system $(U;u,v)$ {\it adapted} (\cite{msuy,suy-front,krsuy-flat}). 
Moreover, an adapted coordinate system $(U;u,v)$ is called {\it special adapted} 
if the frame $\{f_u,f_{vv},\nu\}$ gives an orthonormal frame along the $u$-axis in addition to the above conditions 
(\cite[Lemma 3.2]{suy-front}, \cite{msuy}). 

On an adapted coordinate system $(U;u,v)$, since $\eta=\partial_v$, $f_v(u,0)=0$ holds. 
Thus there exists a $C^\infty$ map $g\colon U\to\R^3$ such that $f_v=vg$. 
We note that $g$ does not vanish along the $u$-axis since $f_{vv}=g$ holds along the $u$-axis. 
Therefore the pair $\{f_u,g,\nu\}$ gives a frame along $f$. 
Moreover, when we take a special adapted coordinate system $(U;u,v)$ around a cuspidal edge $p$, 
then $\{f_u,g,\nu\}$ gives an orthonormal frame along the $u$-axis. 
In particular, $\nu$ can be taken as $\nu=(f_u\times g)/|f_u\times g|$. 
Using these mappings, we define the following functions:
\begin{align}\label{eq:fundamental-cusp}
\begin{aligned}
\wtil{E}&=\inner{f_u}{f_u},& \wtil{F}&=\inner{f_u}{g},&\wtil{G}&=\inner{g}{g},\\
\wtil{L}&=-\inner{f_u}{\nu_u},&\wtil{M}&=-\inner{g}{\nu_u},&\wtil{N}&=-\inner{g}{\nu_v}.
\end{aligned}
\end{align} 
We remark that $\wtil{E}\wtil{G}-\wtil{F}^2>0$ on $U$. 
Relation between geometric invariants stated above and the functions 
in \eqref{eq:fundamental-cusp} are known (\cite{hhnsuy,tera3}). 

%\begin{fact}[\cite{hhnsuy},\cite{tera3}]
%Take a special adapted coordinate system $(U;u,v)$ centered at a cuspidal edge $p$. 
%Then geometric invariants as in \eqref{eq:inv-pair} can be represented as 
%\begin{equation}\label{eq:inv-ce}
%\kappa_s=-\dfrac{\wtil{E}_{vv}}{2},\quad 
%\kappa_\nu=\wtil{L},\quad \kappa_c=\dfrac{\wtil{N}}{2},\quad 
%\kappa_t=\wtil{M}
%\end{equation}
%along the $u$-axis.
%\end{fact}
%By this fact, the function $\wtil{N}$ does not vanish along the $u$-axis.

\subsubsection{Geometric invariants at singular points of the second kind}
We next consider geometric invariants at a singular point of the admissible second kind of a front. 
Let $p$ be a singular point of the second kind of a front $f$ in $\R^3$. 
Then one can take a local coordinate system $(U;u,v)$ centered at $p$ satisfying 
\begin{enumerate}
\item $f_u(p)=0$,
\item the $u$-axis is the singular curve on $U$, and
\item $|f_v(p)|=1$.
\end{enumerate}
We also call this local coordinate system {\it adapted}. 
Further, if an adapted coordinate system $(U;u,v)$ around a singular point of the second kind $p$ 
satisfies $\inner{f_{uv}}{f_v}(p)=0$, then $(U;u,v)$ is called a {\it strongly adapted coordinate system} 
(cf. \cite[Definitions 4.1 and 4.6]{msuy}). 
%(We take a strongly adapted coordinate system in the following of this paper 
%when we consider a singular point of the second kind.) 

Using an adapted coordinate system $(U;u,v)$ around a singular point of the second kind $p$, 
we define geometric invariants at $p$ as follows:
\begin{equation}\label{eq:inv2}
\kappa_\nu(p)=\lim_{u\to0}\dfrac{\inner{f_{uu}}{\nu}}{|f_u|^2}(u,0),\quad 
\mu_c=\dfrac{-\inner{f_{uv}}{\nu_u}}{|f_{uv}\times f_v|^2}(p).
\end{equation}
$\kappa_\nu(p)$ is the {\it limiting normal curvature} and $\mu_c$ is the {\it normalized cuspidal curvature} at $p$ 
(see \cite[Proposition 2.9 and (4.7)]{msuy}). 
These invariants are related to the boundedness of the Gaussian and the mean curvature near singular points 
of the second kind (see \cite[Propositions 4.2 and 4.3, Theorem 4.4]{msuy})
%We remark that $\mu_c$ does not vanish if $f$ is a front at $p$. 
%Moreover, if $p$ is a swallowtail, we can consider the following invariant in addition above two: 
%\begin{equation}\label{eq:inv3}
%\tau_s=\dfrac{|\det(f_{uu},f_{uuu},\nu)|}{|f_{uu}|^{5/2}}(p).
%\end{equation}
%We call $\tau_s$ the {\it limiting singular curvature} at $p$ (\cite{msuy}). 

Let us take an adapted coordinate system $(U;u,v)$ 
around a singular point of the (admissible) second kind $p$. 
Then one can take a null vector field $\eta$ as 
$$\eta=\partial_u+e(u)\partial_v,$$
where $e(u)$ is a $C^\infty$ function with $e(0)=0$ (\cite{msuy}). 
We note that there exists a positive integer $l$ such that 
$e(0)=e'(0)=\cdots=e^{(l-1)}(0)=0$ and $e^{(l)}(0)\neq0$ if $p$ is of the admissible (cf. \cite[Lemma 2.2]{saji-swallow}).

Since $df(\eta)=\eta{f}=f_u+e(u)f_v=0$ along the $u$-axis, 
there exists a $C^\infty$ map $h\colon U\to\R^3$ such that $\eta{f}=vh$, 
and hence $f_u=vh-e(u)f_v$. 
Using this map $h$, we have $\lambda=\det(f_u,f_v,\nu)=v\det(h,f_v,\nu)$. 
By the non-degeneracy, $\lambda_v(p)\neq0$ holds. 
Thus we have $\det(h,f_v,\nu)(p)\neq0$. 
This implies that $\{h,f_v,\nu\}$ gives a frame along $f$ near $p$. 
We take $\nu$ satisfying $\lambda_v(p)>0$, 
so one may take $\nu$ as 
$$\nu=\dfrac{h\times f_v}{|h\times f_v|}$$
in what follows. 

We define the following functions:
\begin{align}\label{eq:fundamentals}
\begin{aligned}
\what{E}&=\inner{h}{h},& \what{F}&=\inner{h}{f_v},&\what{G}&=\inner{f_v}{f_v},\\
\what{L}&=-\inner{h}{\nu_u},&\what{M}&=-\inner{h}{\nu_v},&\what{N}&=-\inner{f_v}{\nu_v}.
\end{aligned}
\end{align}
We note that $\what{E}\what{G}-\what{F}^2>0$ on $U$. 
Using functions as in \eqref{eq:fundamentals}, 
differentials $\nu_u$ and $\nu_v$ of $\nu$ can be written as follows.

\begin{lem}[{\cite[Lemma 2.8]{tera3}}]\label{lem:weingarten2}
On an adapted coordinate system $(U;u,v)$, 
$\nu_u$ and $\nu_v$ can be written as 
\begin{align}\label{eq:weingarten}
\begin{aligned}
\nu_u&=\frac{\what{F}(v\what{M}-e(u)\what{N})-\what{G}\what{L}}{\what{E}\what{G}-\what{F}^2}h+
\frac{\what{F}\what{L}-\what{E}(v\what{M}-e(u)\what{N})}{\what{E}\what{G}-\what{F}^2}f_v,\\
\nu_v&=\frac{\what{F}\what{N}-\what{G}\what{M}}{\what{E}\what{G}-\what{F}^2}h+
\frac{\what{F}\what{M}-\what{E}\what{N}}{\what{E}\what{G}-\what{F}^2}f_v.
\end{aligned}
\end{align}
\end{lem}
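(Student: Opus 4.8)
The plan is to carry out a Weingarten-type computation in the moving frame $\{h,f_v,\nu\}$. Because $\inner{\nu}{\nu}\equiv1$, differentiation gives $\inner{\nu_u}{\nu}=\inner{\nu_v}{\nu}=0$, so both $\nu_u$ and $\nu_v$ lie in the plane spanned by $h$ and $f_v$ (recall that $\{h,f_v,\nu\}$ is a frame and that, since $\nu=(h\times f_v)/|h\times f_v|$, the normal $\nu$ is orthogonal to both $h$ and $f_v$). I would therefore set
\[
\nu_u=a\,h+b\,f_v,\qquad \nu_v=c\,h+d\,f_v
\]
with unknown $C^\infty$ coefficients $a,b,c,d$, and determine them by pairing each relation with $h$ and with $f_v$. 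Using the notation of \eqref{eq:fundamentals}, this produces two linear systems with the common coefficient (Gram) matrix $\begin{pmatrix}\what{E}&\what{F}\\\what{F}&\what{G}\end{pmatrix}$, whose determinant $\what{E}\what{G}-\what{F}^2$ is positive on $U$; hence each system is uniquely solvable, and Cramer's rule will output exactly the claimed fractions once the four right-hand sides are computed correctly.

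Three of the four inner products are immediate from \eqref{eq:fundamentals}: $\inner{\nu_u}{h}=-\what{L}$, $\inner{\nu_v}{h}=-\what{M}$ and $\inner{\nu_v}{f_v}=-\what{N}$. The one remaining pairing, $\inner{\nu_u}{f_v}$, is not among the fundamental quantities, and computing it is the crux of the proof. I would obtain it by differentiating $\inner{\nu}{f_v}=0$ in $u$, which gives $\inner{\nu_u}{f_v}=-\inner{\nu}{f_{uv}}$, and then invoking the structural identity $f_u=v\,h-e(u)f_v$ special to a second-kind adapted coordinate. Differentiating the latter in $v$ yields $f_{uv}=h+v\,h_v-e(u)f_{vv}$, so that
\[
\inner{\nu}{f_{uv}}=\inner{\nu}{h}+v\inner{\nu}{h_v}-e(u)\inner{\nu}{f_{vv}}.
\]
Here $\inner{\nu}{h}=0$; moreover differentiating $\inner{\nu}{f_v}=0$ in $v$ gives $\inner{\nu}{f_{vv}}=-\inner{\nu_v}{f_v}=\what{N}$, and differentiating $\inner{\nu}{h}=0$ in $v$ gives $\inner{\nu}{h_v}=-\inner{\nu_v}{h}=\what{M}$. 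Substituting these produces $\inner{\nu}{f_{uv}}=v\what{M}-e(u)\what{N}$, whence $\inner{\nu_u}{f_v}=-\bigl(v\what{M}-e(u)\what{N}\bigr)$.

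With all four right-hand sides in hand, solving the two $2\times2$ systems by Cramer's rule gives the coefficients $a,b$ from the data vector $\bigl(-\what{L},\,-(v\what{M}-e(u)\what{N})\bigr)$ and $c,d$ from $(-\what{M},-\what{N})$, and these are precisely the fractions appearing in \eqref{eq:weingarten}. The only nonroutine step is the identification of the off-diagonal term $\inner{\nu_u}{f_v}$: in contrast to the regular-surface case the relevant second-fundamental-form data are not symmetric, and the asymmetry is governed exactly by the factor $v$ and the null-direction function $e(u)$ entering through $f_u=v\,h-e(u)f_v$. Keeping careful track of these two contributions is where the computation must be done attentively, but no essential difficulty beyond this bookkeeping is expected.
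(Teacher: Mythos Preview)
Your argument is correct. The paper does not supply its own proof of this lemma (it is quoted from \cite{tera3}), but your Weingarten-type computation in the frame $\{h,f_v,\nu\}$ is exactly the natural one and matches the method the paper itself uses a few lines later for the analogous expansion of $h_u$, $h_v$, $f_{vv}$ in Lemma~\ref{lem:derivative}; in particular your identification of the one nontrivial pairing $\inner{\nu_u}{f_v}=-(v\what{M}-e(u)\what{N})$ via $f_{uv}=h+vh_v-e(u)f_{vv}$ is correct and is precisely the computation recorded in \eqref{eq:diff_fuv}.
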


If $f$ is a front at a singular point of the second kind $p$, 
then $\eta\nu\neq0$ along the singular curve $\gamma$. 
We rephrase this condition using functions as in \eqref{eq:fundamentals}.  
By $\eta=\partial_u+e(u)\partial_v$ and Lemma \ref{lem:weingarten2}, 
it follows that 
\begin{equation}\label{eq:eta-nu}
d\nu(\eta)=\nu_u+e(u)\nu_v
=-\dfrac{\what{L}+e(u)\what{M}}{\what{E}\what{G}-\what{F}^2}(\what{G}h-\what{F}f_v)
\end{equation}
holds along the $u$-axis. 
Thus if $f$ is a front around $p$, then $\what{L}+e(u)\what{M}\neq0$ along the $u$-axis, 
in particular, $\what{L}(p)\neq0$. 
Moreover, we have the following.
\begin{lem}[\cite{msuy,tera3}]\label{lem:relation}
Take a strongly adapted coordinate system $(U;u,v)$ 
around a singular point of the $($admissible$)$ second kind $p$ of a front $f$ in $\R^3$.
Then $\kappa_\nu(p)=\what{N}(p)$ and $\mu_c={\what{L}(p)}/{\what{E}}(p)$ hold. 
%Moreover, if $p$ is a swallowtail of $f$, then 
%$$\tau_s=\dfrac{2|\lambda_v(p)|}{|e'(0)|^{1/2}}$$
%holds.
\end{lem}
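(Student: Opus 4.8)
The plan is to prove both identities by direct computation in a strongly adapted coordinate system, using the explicit Weingarten formulas from Lemma~\ref{lem:weingarten2} together with the definitions \eqref{eq:inv2} of the invariants. First I would recall what the strongly adapted condition $\inner{f_{uv}}{f_v}(p)=0$ buys us, and extract the values of the fundamental quantities in \eqref{eq:fundamentals} at the singular point. Along the $u$-axis we have $f_u=vh-e(u)f_v$, so at $p$ (where $v=0$ and $e(0)=0$) the relation $f_u=vh-e(u)f_v$ degenerates, and I would differentiate it to express $f_{uv}$ and $f_{uu}$ in terms of $h$, $f_v$ and their derivatives. In particular $f_{uv}=h+vh_v-e(u)f_{vv}$, which gives $f_{uv}(p)=h(p)$, and this is the key bridge between the second-kind invariants (built from $f_{uv}$) and the frame quantities (built from $h$).

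For the claim $\kappa_\nu(p)=\what{N}(p)$, I would start from the definition $\kappa_\nu(p)=\lim_{u\to0}\inner{f_{uu}}{\nu}/|f_u|^2$ and rewrite it using $f_u=vh-e(u)f_v$. The subtlety is that both numerator and denominator vanish as $u\to0$ along the $u$-axis, so the limit is genuinely a $0/0$ that must be resolved; I expect that after differentiating $f_u=vh-e(u)f_v$ with respect to $u$ and taking inner products with $\nu$ (using $\inner{h}{\nu}=\inner{f_v}{\nu}=0$), the ratio collapses to $-\inner{f_v}{\nu_v}(p)=\what{N}(p)$. Here the strongly adapted hypothesis ensures the cross terms involving $\inner{f_{uv}}{f_v}$ drop out so that no extra contribution survives.

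For the claim $\mu_c=\what{L}(p)/\what{E}(p)$, I would substitute $f_{uv}(p)=h(p)$ and $f_v(p)$ into the definition $\mu_c=-\inner{f_{uv}}{\nu_u}/|f_{uv}\times f_v|^2$. The numerator becomes $-\inner{h}{\nu_u}(p)=\what{L}(p)$ directly from \eqref{eq:fundamentals}. For the denominator, $|f_{uv}\times f_v|^2=|h\times f_v|^2=\what{E}\what{G}-\what{F}^2$ by Lagrange's identity, but since the coordinate system is strongly adapted and $\{h,f_v,\nu\}$ is set up with $\nu=(h\times f_v)/|h\times f_v|$, I would check that $\what{F}(p)=0$ so that the denominator reduces to $\what{E}(p)\,\what{G}(p)=\what{E}(p)$ after normalizing $|f_v(p)|=1$, i.e. $\what{G}(p)=1$. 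Putting these together yields $\mu_c=\what{L}(p)/\what{E}(p)$.

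The main obstacle will be the $\kappa_\nu$ computation, because of the indeterminate limit: one must carefully track the $v$- and $e(u)$-dependence of $f_u$ and its $u$-derivative near $p$, and verify that the strongly adapted condition is exactly what kills the otherwise-surviving term $\inner{f_{uv}}{f_v}$ that would spoil the clean identity. The $\mu_c$ identity is essentially algebraic once $f_{uv}(p)=h(p)$ and the orthonormality-type relations at $p$ are in hand, so I would present that part briefly and concentrate the detailed bookkeeping on the limiting normal curvature.
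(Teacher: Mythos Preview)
Your plan is correct and will go through. The paper's own proof is not really a proof: it simply cites \cite[Lemma 2.9]{tera3} for $\kappa_\nu(p)=\what N(p)$ and \cite[(4.13)]{msuy} together with \eqref{eq:fundamentals} for $\mu_c=\what L(p)/\what E(p)$. Your direct computation is therefore more self-contained and is exactly the kind of argument those references unpack.

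One small correction to your expectations: the strongly adapted hypothesis $\inner{f_{uv}}{f_v}(p)=0$ is \emph{not} needed for the $\kappa_\nu$ identity. Along the $u$-axis you have $f_u=-e(u)f_v$, hence $|f_u|^2=e(u)^2\what G$; and from $f_{uu}=-e'(u)f_v-e(u)f_{uv}$ with $f_{uv}(u,0)=h-e(u)f_{vv}$ you get $\inner{f_{uu}}{\nu}=e(u)^2\what N$ on the $u$-axis (no $\inner{f_{uv}}{f_v}$ term ever appears, since you are pairing with $\nu$). The ratio is $\what N/\what G$, and only the adapted normalization $\what G(p)=1$ is used to conclude. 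The strongly adapted condition enters only in the $\mu_c$ computation, precisely to force $\what F(p)=\inner{h}{f_v}(p)=\inner{f_{uv}}{f_v}(p)=0$, so that $|h\times f_v|^2(p)=\what E(p)\what G(p)-\what F(p)^2=\what E(p)$. With that adjustment your outline is complete.
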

\begin{proof}
For $\kappa_\nu$, it follows from \cite[Lemma 2.9]{tera3}. 
For $\mu_c$, we have the expression by \eqref{eq:fundamentals} and \cite[(4.13)]{msuy}. 
\end{proof}

\subsubsection{Rational boundedness of the Gaussian curvature}
Let $f\colon\Sig\to\R^3$ be a front and $p\in \Sig$ a non-degenerate singular point of $f$. 
Then the Gaussian curvature $K$ of $f$ is unbounded near $p$ in general. 
In \cite{msuy}, a notion of the {\it rational boundedness} for unbounded functions was introduced. 
(For precise definition and descriptions of the rational boundedness, see \cite[Definition 3.4 and Page 260]{msuy}.)
For the Gaussian curvature of a front, the following assertion holds.

\begin{fact}[{\cite[Corollary C]{msuy}}]\label{fact:rational-bounded}
Let $f\colon\Sig\to\R^3$ be a front and $p\in \Sig$ a non-degenerate singular point of $f$, 
where $\Sig$ is a domain in $\R^2$. 
Then the following statements are equivalent.
\begin{enumerate}
\item The Gaussian curvature of $f$ is rationally bounded at $p$.
\item The limiting normal curvature of $f$ vanishes at $p$.
\item The Gauss map of $f$ has a singularity at $p$.
\end{enumerate}
\end{fact}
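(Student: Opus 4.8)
The plan is to reduce all three conditions to a single scalar equation, namely the vanishing at $p$ of the smooth function $\lambda_\nu:=\det(\nu_u,\nu_v,\nu)$. First I would record the Weingarten-type identity $K\lambda=\lambda_\nu$, valid on the set of regular points of $f$: expanding $\nu_u,\nu_v$ in the tangent basis $\{f_u,f_v\}$ on the regular set, the determinant $\det(\nu_u,\nu_v,\nu)$ factors as $(\det\text{ of the Weingarten coefficient matrix})\cdot\det(f_u,f_v,\nu)$, and the first factor is exactly the Gaussian curvature $K$ while the second is $\lambda$. Since $\lambda$ and $\lambda_\nu$ both extend smoothly across $S(f)$, this expresses the a priori singular quantity $K$ as the quotient $\lambda_\nu/\lambda$ of two globally defined $C^\infty$ functions, which is precisely the form in which rational boundedness is to be tested.

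The equivalence of (3) with $\lambda_\nu(p)=0$ is then immediate: because $\nu$ is unit-valued we have $\nu_u,\nu_v\perp\nu$, so $\nu_u\times\nu_v=\lambda_\nu\,\nu$, and $(d\nu)_p$ drops rank exactly when $\lambda_\nu(p)=0$. For (2) I would compute $\lambda_\nu$ in a strongly adapted coordinate system at a singular point of the second kind. Substituting the expressions for $\nu_u,\nu_v$ from Lemma \ref{lem:weingarten2} and using $\lambda=v\det(h,f_v,\nu)$, a direct determinant computation gives, along the singular curve, $\det(\nu_u,\nu_v,\nu)=\dfrac{\what N(\what L+e(u)\what M)}{\what E\what G-\what F^2}\,|h\times f_v|$, so that at $p$, where $e(0)=0$ and $|h\times f_v|=\sqrt{(\what E\what G-\what F^2)(p)}$, one obtains $\lambda_\nu(p)=\what N(p)\what L(p)/\sqrt{(\what E\what G-\what F^2)(p)}$. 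The front condition forces $\what L(p)\neq0$ (as noted after \eqref{eq:eta-nu}) and $\kappa_\nu(p)=\what N(p)$ by Lemma \ref{lem:relation}; hence $\lambda_\nu(p)=0$ if and only if $\kappa_\nu(p)=0$. The first-kind (cuspidal edge) case is treated identically using the frame $\{f_u,g,\nu\}$ and the quantities \eqref{eq:fundamental-cusp}, yielding again $\lambda_\nu(p)=c\,\kappa_\nu(p)$ with $c\neq0$, so that (2) and (3) coincide in all cases.

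It remains to read off (1). Having written $K=\lambda_\nu/\lambda$ with $\lambda_\nu,\lambda$ smooth and $\lambda$ having a regular zero set through $p$ by non-degeneracy, the singular behaviour of $K$ along $S(f)$ is governed entirely by the numerator $\lambda_\nu$: the quotient is rationally bounded precisely when $\lambda_\nu$ is divisible by $\lambda$ in the appropriate sense, which by the non-degeneracy of $p$ reduces to the pointwise threshold $\lambda_\nu(p)=0$. Unwinding the definition of rational boundedness from \cite[Definition 3.4]{msuy} for the quotient $\lambda_\nu/\lambda$, and verifying that $\lambda_\nu(p)=0$ is exactly the condition at which the simple pole of $K$ along the singular curve becomes admissible, is the step I expect to be most delicate, since it is where the precise (rather than naive) notion of boundedness enters and where the first- and second-kind normal forms must be reconciled. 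Combining the three reductions then yields (1)$\Leftrightarrow$(2)$\Leftrightarrow$(3).
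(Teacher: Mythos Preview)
The paper does not prove this statement at all: it is stated as a \textbf{Fact} with a citation to \cite[Corollary C]{msuy}, and no argument is supplied. There is therefore nothing in the present paper to compare your proposal against.

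That said, your outline is the natural one and is essentially how the result is established in the cited reference. The reductions (2)$\Leftrightarrow\lambda_\nu(p)=0\Leftrightarrow$(3) via the identity $K\lambda=\lambda_\nu$ and the adapted-coordinate computation are correct; your formula $\lambda_\nu(p)=\what N(p)\what L(p)/\sqrt{(\what E\what G-\what F^2)(p)}$ in the second-kind case is right and matches \eqref{eq:front_GH} after multiplying by $\lambda=v\sqrt{\what E\what G-\what F^2}$. The only genuinely incomplete step is the one you flag yourself: the equivalence (1)$\Leftrightarrow\lambda_\nu(p)=0$ is not simply ``divisibility of $\lambda_\nu$ by $\lambda$'' but hinges on the precise definition of rational boundedness in \cite[Definition~3.4]{msuy}, which for a quotient $\psi/\lambda$ with $d\lambda(p)\neq0$ amounts exactly to $\psi(p)=0$. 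Once that definition is unpacked, the equivalence is immediate, so your concern is warranted but the gap is small.
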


We will characterize rational boundedness of the Gaussian curvature of the focal surface 
in terms of certain geometrical property of the initial front. 

\subsection{Second order derivatives of fronts}\label{append1}
Let $f\colon\Sig\to\R^3$ be a front, $\nu$ its unit normal vector and $p$ a singular point of the second kind of $f$. 
Take an adapted coordinate system $(U;u,v)$ around $p$. 
We then prepare expressions of $h_u$, $h_v$ and $f_{vv}$ by the frame $\{h,f_v,\nu\}$ along $f$, 
where $h$ is a $C^\infty$ map defined on $U$ satisfying $df(\eta)=vh$. 
These shall play important role to analyze some geometric properties of focal surfaces of fronts. 
\begin{lem}\label{lem:derivative}
	Let $f\colon\Sig\to\R^3$ be a front, $\nu$ its unit normal vector and $p\in\Sig$ a singular point of the second kind. 
	Take an adapted coordinate system $(U;u,v)$ centered at $p$. 
	Then we have the following:
	\begin{align}\label{eq:derivatives}
		\begin{aligned}
			h_u&=\frac{\what{E}_u\what{G}-2\what{F}A}{2(\what{E}\what{G}-\what{F}^2)}h+
			\frac{2\what{E}A-\what{E}_u\what{F}}{2(\what{E}\what{G}-\what{F}^2)}f_v+\what{L}\nu,\\
			h_v&=\frac{\what{E}_v\what{G}-2\what{F}B}{2(\what{E}\what{G}-\what{F}^2)}h+
			\frac{2\what{E}B-\what{E}_v\what{F}}{2(\what{E}\what{G}-\what{F}^2)}f_v+\what{M}\nu,\\
			f_{vv}&=\frac{2\what{G}(\what{F}_v-B)-\what{F}\what{G}_v}{2(\what{E}\what{G}-\what{F}^2)}h+
			\frac{\what{E}\what{G}_v-2\what{F}\what{F}_v+2\what{F}B}{2(\what{E}\what{G}-\what{F}^2)}f_v+\what{N}\nu,
		\end{aligned}
	\end{align}
	where $A=\inner{h_u}{f_v}$ and $B=\inner{h_v}{f_v}$. 
\end{lem}
\begin{proof}
	Since $\{h,f_v,\nu\}$ is a moving frame along the front $f$, 
	there exist $C^\infty$ functions $A_i,B_i,C_i\colon U\to\R$ $(i=1,2,3)$ such that 
	$$h_u=A_1h+A_2f_v+A_3\nu,\quad h_v=B_1h+B_2f_v+B_3\nu,\quad 
	f_{vv}=C_1h+C_2f_v+C_3\nu.$$
	We first consider for $h_u$. 
	Taking inner products $\inner{h_u}{h}(=\what{E}_u/2)$ and $\inner{h_u}{f_v}(=A)$, we have 
	$$\begin{pmatrix} \frac{\what{E}_u}{2} \\ A \end{pmatrix}=
	\begin{pmatrix} \what{E} & \what{F} \\ \what{F} & \what{G} \end{pmatrix} 
	\begin{pmatrix} A_1 \\ A_2 \end{pmatrix}.$$
	Solving this equation for $A_1$ and $A_2$, 
	we have 
	$$A_1=\frac{\what{E}_u\what{G}-2\what{F}A}{2(\what{E}\what{G}-\what{F}^2)},\quad 
	A_2=\frac{2\what{E}A-\what{E}_u\what{F}}{2(\what{E}\what{G}-\what{F}^2)}.$$
	Moreover, since $\inner{h}{\nu}=0$, we have $\inner{h_u}{\nu}+\inner{h}{\nu_u}=\inner{h_u}{\nu}-\what{L}=0$, 
	and hence $A_3=\inner{h_u}{\nu}=\what{L}$. 
	Thus we have the assertion for $h_u$. 
	For $h_v$ and $f_{vv}$, we can show similarly 
	by using $\what{F}_v=\inner{h}{f_v}_v=\inner{h_v}{f_v}+\inner{h}{f_{vv}}=B+\inner{h}{f_{vv}}$.
\end{proof}

On an adapted coordinate system $(U;u,v)$, since $f_u=vh-e(u)f_v$, $f_{uu}$ and $f_{uv}$ can be written as 
$f_{uu}=vh_u-e'(u)f_v-e(u)f_{uv}$ and $f_{uv}=h+vh_v-e(u)f_{vv}$, respectively. 
Thus we can write $f_{uu}$ and $f_{uv}$ as 
\begin{equation}\label{eq:diff_fuv}
	\begin{aligned}
		f_{uu}&=(vA_1-e(u)(1+vB_1-e(u)C_1))h\\
		&\quad\quad+(vA_2-e'(u)-e(u)(vB_2-e(u)C_2))f_v+(v(\what{L}-e(u)\what{M})+e(u)^2\what{N})\nu,\\
		f_{uv}&=(1+vB_1-e(u)C_1)h+(vB_2-e(u)C_2)f_v+(v\what{M}-e(u)\what{N})\nu,
	\end{aligned}
\end{equation}
where $A_i,B_i,C_i$ $(i=1,2)$ correspond to the coefficient functions in \eqref{eq:derivatives}.
By \eqref{eq:diff_fuv}, it follows that $f_{uu}(p)=0$ and $f_{uv}(p)=h(p)$ hold. 

For functions $A$ and $B$, the following relation holds. 
\begin{lem}\label{lem:ab}
	Under the above setting, we have 
	$$A+e(u)B=-\what{E}+\what{F}_u+e(u)\what{F}_v-\frac{v \what{E}_v}{2},$$
	where $A=\inner{h_u}{f_v}$ and $B=\inner{h_v}{f_v}$. 
\end{lem}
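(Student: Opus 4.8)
The plan is to extract the combination $A+e(u)B$ from the first derivatives of the coefficient $\what{F}=\inner{h}{f_v}$, since $A=\inner{h_u}{f_v}$ and $B=\inner{h_v}{f_v}$ are exactly the tangential terms that the product rule produces when $\what{F}$ is differentiated. So the proof will not require the full expansions of Lemma \ref{lem:derivative}; it is really just a bookkeeping identity built from $\eta f = vh$.

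First I would differentiate $\what{F}=\inner{h}{f_v}$ in $u$ and in $v$, obtaining $\what{F}_u=A+\inner{h}{f_{uv}}$ and $\what{F}_v=B+\inner{h}{f_{vv}}$. Taking the linear combination $\what{F}_u+e(u)\what{F}_v$ then isolates $A+e(u)B$ up to the single correction term $\inner{h}{\,f_{uv}+e(u)f_{vv}\,}$, so the whole problem reduces to simplifying this inner product.

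The key step is to recognize $f_{uv}+e(u)f_{vv}$ as a $v$-derivative of the null direction. Because $\eta=\partial_u+e(u)\partial_v$ and $e=e(u)$ is independent of $v$, the operators $\partial_v$ and $\eta$ commute, whence $f_{uv}+e(u)f_{vv}=\partial_v(\eta f)$. Using the defining relation $\eta f=vh$ from the adapted setup, this equals $\partial_v(vh)=h+v\,h_v$. Consequently $\inner{h}{\,f_{uv}+e(u)f_{vv}\,}=\inner{h}{h}+v\inner{h}{h_v}=\what{E}+\tfrac{v}{2}\what{E}_v$, where the last equality uses $\inner{h}{h_v}=\tfrac12\partial_v\inner{h}{h}=\tfrac12\what{E}_v$.

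Substituting back gives $A+e(u)B=\what{F}_u+e(u)\what{F}_v-\what{E}-\tfrac{v}{2}\what{E}_v$, which is precisely the asserted formula. I do not anticipate a genuine obstacle; the only point that needs care is the commutation of $\partial_v$ with the null vector field $\eta$, which hinges on $e$ being a function of $u$ alone, and the rest is the product rule together with the identity $\eta f=vh$.
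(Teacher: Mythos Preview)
Your argument is correct and follows essentially the same idea as the paper: both proofs differentiate $\what{F}=\inner{h}{f_v}$ by the product rule and rely on the identity $f_{uv}+e(u)f_{vv}=\partial_v(\eta f)=h+vh_v$ (which the paper records as $f_{uv}=h+vh_v-e(u)f_{vv}$ just before \eqref{eq:diff_fuv}). The only difference is in packaging: the paper computes $\inner{h}{f_{uv}}$ by invoking the frame expansions \eqref{eq:derivatives} and \eqref{eq:diff_fuv}, whereas you first take the combination $\what{F}_u+e(u)\what{F}_v$ and contract $\inner{h}{h+vh_v}$ directly, avoiding Lemma~\ref{lem:derivative} altogether---a slightly cleaner execution of the same computation.
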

\begin{proof}
	Since $\what{F}=\inner{h}{f_v}$, we have 
	$\what{F}_u=\inner{h_u}{f_v}+\inner{h}{f_{uv}}=A+\inner{h}{f_{uv}}$. 
	By \eqref{eq:derivatives} and \eqref{eq:diff_fuv}, it holds that 
	$$\inner{h}{f_{uv}}=\what{E}+e(u)(B-\what{F}_v)+\frac{v\what{E}_v}{2}.$$
	Thus we have the assertion.
\end{proof}
%Since $e(0)=0$, we have $A=-\what{E}+\what{F}_u$ at $p$ by the above relation. 
%%%%%SECTION 3%%%%%
\section{Principal curvatures and principal vectors}\label{sec:principal}
%\subsection{Principal curvatures and principal vectors}
We recall principal curvatures and principal vectors. 
We focus on a singular point of the second kind in this section. 
For the case of cuspidal edge, we can consider similar properties (see \cite{tera2,tera3}).

Let $f\colon\Sig\to\R^3$ be a front and $p\in\Sig$ a singular point of the second kind. 
Take a strongly adapted coordinate system $(U;u,v)$ around $p$. 
Then we set functions $\kappa_1$ and $\kappa_2$ on the set of regular points of $f$ by 
\begin{equation}\label{eq:princ-second}
\kappa_1=\dfrac{\hat{k}_1+\hat{k}_2}{2v(\what{E}\what{G}-\what{F}^2)},\quad
\kappa_2=\dfrac{\hat{k}_1-\hat{k}_2}{2v(\what{E}\what{G}-\what{F}^2)},
\end{equation}
where 
\begin{align*}
\hat{k}_1&=\what{G}(\what{L}+e(u)\what{M})-2v\what{F}\what{M}+v\what{E}\what{N},\\
\hat{k}_2&=\sqrt{\hat{k}_1^2-4v(\what{E}\what{G}-\what{F}^2)(\what{N}(\what{L}+e(u)\what{M})-v\what{M}^2)}.
\end{align*}
Since $\what{L}+e(u)\what{M}\neq0$ on the $u$-axis, 
$\hat{k}_1$ and $\hat{k}_2$ are $C^\infty$ functions near $p$. 
Moreover, by direct calculations, we have 
$\kappa_1\kappa_2=K$ and $\kappa_1+\kappa_2=2H$ on the set of regular points of $f$, 
where $K$ is the Gaussian curvature and $H$ is the mean curvature of $f$ written as 
\begin{equation}\label{eq:front_GH}
K=\dfrac{\what{N}(\what{L}+e(u)\what{M})-v\what{M}^2}{v(\what{E}\what{G}-\what{F}^2)},\quad 
H=\dfrac{\what{G}(\what{L}+e(u)\what{M})-2v\what{F}\what{M}+v\what{E}\what{N}}{2v(\what{E}\what{G}-\what{F}^2)}
\end{equation}
(see \cite[Pages 268 and 270]{msuy}). 
Thus one can consider $\kappa_1$ and $\kappa_2$ as {\it principal curvatures} of $f$. 
We note that $H$ is unbounded near $p$ (\cite[Corollary 3.5]{suy-front}), 
and hence at least one principal curvature may diverge near $p$. 
Indeed, the following holds. 

\begin{prop}[cf. {\cite{mura-ume,tera3}}]\label{prop:princ}
Let $f$ be a front in $\R^3$ and $p$ a non-degenerate singular point of $f$. 
Then one principal curvature can be extended as a bounded $C^\infty$ function at $p$, 
and another is unbounded near $p$.
\end{prop}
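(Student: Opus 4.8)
The plan is to work on a strongly adapted coordinate system $(U;u,v)$ around the singular point of the second kind $p$, and to analyze the two functions $\kappa_1$ and $\kappa_2$ from \eqref{eq:princ-second} directly. The key structural feature I would exploit is that the principal curvatures are built from $\hat{k}_1$ and $\hat{k}_2$, both of which are $C^\infty$ near $p$: indeed $\hat{k}_1$ is manifestly smooth, and $\hat{k}_2 = \sqrt{\hat{k}_1^2 - 4v(\what{E}\what{G}-\what{F}^2)(\what{N}(\what{L}+e(u)\what{M}) - v\what{M}^2)}$ is smooth because the discriminant under the square root is a positive smooth function near $p$ (the radicand equals $\hat{k}_1^2$ plus a term that vanishes on the $u$-axis, and $\hat{k}_1(p) = \what{G}\what{L}(p) \neq 0$ since $\what{L}(p)\neq 0$ for a front).

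First I would record that at $p$ we have $v=0$, so $\hat{k}_1(p) = \what{G}(\what{L}+e(u)\what{M})(p) = \what{G}\what{L}(p) \neq 0$, and the radicand of $\hat{k}_2$ reduces to $\hat{k}_1(p)^2$, giving $\hat{k}_2(p) = |\hat{k}_1(p)| = \hat{k}_1(p)$ after choosing signs consistently (recall $\lambda_v(p)>0$ fixes the orientation). Therefore $\hat{k}_1 - \hat{k}_2$ vanishes at $p$, while $\hat{k}_1 + \hat{k}_2$ does not. This is the crux: the numerator of $\kappa_2$ vanishes at $p$ to the same order that the common denominator $2v(\what{E}\what{G}-\what{F}^2)$ does. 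Concretely, I would show that $\hat{k}_1 - \hat{k}_2$ is divisible by $v$ as a smooth function; since $\hat{k}_1 - \hat{k}_2 = (\hat{k}_1^2 - \hat{k}_2^2)/(\hat{k}_1 + \hat{k}_2)$ and the difference of squares equals $4v(\what{E}\what{G}-\what{F}^2)(\what{N}(\what{L}+e(u)\what{M}) - v\what{M}^2)$, this difference carries an explicit factor of $v$.

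Carrying this out, I would compute
\[
\kappa_2 = \frac{\hat{k}_1 - \hat{k}_2}{2v(\what{E}\what{G}-\what{F}^2)} = \frac{\hat{k}_1^2 - \hat{k}_2^2}{2v(\what{E}\what{G}-\what{F}^2)(\hat{k}_1 + \hat{k}_2)} = \frac{2(\what{N}(\what{L}+e(u)\what{M}) - v\what{M}^2)}{\hat{k}_1 + \hat{k}_2},
\]
where the cancellation of $v(\what{E}\what{G}-\what{F}^2)$ is exact. The right-hand side is a ratio of smooth functions whose denominator $\hat{k}_1 + \hat{k}_2$ equals $2\hat{k}_1(p) \neq 0$ at $p$, so $\kappa_2$ extends to a bounded $C^\infty$ function in a neighborhood of $p$. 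By contrast, $\kappa_1 = (\hat{k}_1 + \hat{k}_2)/(2v(\what{E}\what{G}-\what{F}^2))$ has a numerator that tends to $2\hat{k}_1(p)\neq 0$ while its denominator vanishes along the singular curve, forcing $|\kappa_1|\to\infty$ as $v\to 0$; hence $\kappa_1$ is unbounded near $p$.

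\textbf{The main obstacle} I anticipate is the smoothness of $\hat{k}_2$, i.e. justifying that the square root does not destroy differentiability. This requires verifying that the radicand stays bounded away from zero near $p$ and is itself smooth, which follows from $\hat{k}_1(p)^2 > 0$ together with the continuity of the correction term; once smoothness of $\hat{k}_2$ is secured, the rest is the algebraic simplification above together with the sign bookkeeping for the square root. The case of a singular point of the first kind (cuspidal edge) is handled analogously using the invariants in \eqref{eq:fundamental-cusp}, or one may simply invoke that the statement there is already treated in \cite{tera2,tera3} as noted at the start of this section.
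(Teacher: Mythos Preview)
The paper does not supply its own proof of this proposition; it is stated with a citation to \cite{mura-ume,tera3} and treated as background. Your argument is correct and is the natural completion of the setup the paper provides immediately before the statement: the paper already records that $\hat{k}_1$ and $\hat{k}_2$ are $C^\infty$ near $p$ (because $\what{L}+e(u)\what{M}\neq0$ along the $u$-axis), and your difference-of-squares rewriting
\[
\kappa_2=\frac{\hat{k}_1-\hat{k}_2}{2v(\what{E}\what{G}-\what{F}^2)}
=\frac{2\bigl(\what{N}(\what{L}+e(u)\what{M})-v\what{M}^2\bigr)}{\hat{k}_1+\hat{k}_2}
\]
is the standard way to exhibit the bounded principal curvature explicitly.

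One small correction: the assertion that $\lambda_v(p)>0$ forces $\hat{k}_2(p)=\hat{k}_1(p)$ rather than $-\hat{k}_1(p)$ is not justified. The normalization $\lambda_v(p)>0$ fixes the orientation of the frame $\{h,f_v,\nu\}$ but does not determine the sign of $\what{L}(p)$, and hence not that of $\hat{k}_1(p)=\what{G}(p)\what{L}(p)$. If $\hat{k}_1(p)<0$ then $\hat{k}_2(p)=|\hat{k}_1(p)|=-\hat{k}_1(p)$, so it is $\hat{k}_1+\hat{k}_2$ that vanishes at $p$ and the roles of $\kappa_1$ and $\kappa_2$ are interchanged. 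This does not affect the conclusion---one of the two is bounded $C^\infty$ and the other unbounded---only which label you attach to which.
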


We note that Medina-Tejeda \cite{tito-2020} shows boundedness of principal curvatures of fronts 
with other singularities.

Let us denote by $\kappa$ and $\tilde{\kappa}$ the bounded principal curvature 
and the unbounded principal curvature, respectively. 
Then $\kappa(p)=\kappa_\nu(p)$ holds at an admissible singular point $p$ (\cite[Theorem 3.1]{tera3}). 
Moreover, setting $\hat{\kappa}=\lambda\tilde{\kappa}$, 
where $\lambda$ is the signed area density function of $f$ in \eqref{eq:lambda}, 
$\hat{\kappa}$ is bounded $C^\infty$ function on $U$, 
in particular, $\hat{\kappa}(p)\neq0$ (\cite[Remark 3.2]{tera3}).  
When $p$ is of the second kind, the relation $\hat{\kappa}(p)=\lambda_v(p)\mu_c$ holds. 
Furthermore, we note that the curvature radius function relative to $\tilde{\kappa}$ can be extended as a $C^\infty$ function on $U$ 
because it can be written as $1/\tilde{\kappa}=\lambda/\hat{\kappa}$. 
We denote by $\hat{\rho}$ the curvature radius function $\lambda/\hat{\kappa}$ of 
the unbounded principal curvature. 

We next consider principal vectors $\bV$ and $\wtil{\bV}$ associated	 to $\kappa$ and $\tilde{\kappa}$, respectively. 
Let $\hat{I}$ and $\what{II}$ be $2\times 2$ matrices given by 
\begin{align*}
\hat{I}&=\begin{pmatrix} 
\inner{f_u}{f_u} & \inner{f_u}{f_v}\\
\inner{f_u}{f_v} & \inner{f_v}{f_v}
\end{pmatrix}
=\begin{pmatrix}
v^2\what{E}-2ve(u)\what{F}+e(u)^2\what{G} & v\what{F}-e(u)\what{G}\\
v\what{F}-e(u)\what{G} & \what{G}
\end{pmatrix},\\
\what{II}&=\begin{pmatrix}
-\inner{f_u}{\nu_u} & -\inner{f_u}{\nu_v}\\
-\inner{f_v}{\nu_u} & -\inner{f_v}{\nu_v}
\end{pmatrix}=
\begin{pmatrix}
v\what{L}-e(u)(v\what{M}-e(u)\what{N}) & v\what{M}-e(u)\what{N}\\
v\what{M}-e(u)\what{N} & \what{N}
\end{pmatrix}.
\end{align*}
When $\bV$ and $\wtil{\bV}$ are principal vectors relative to $\kappa$ and $\tilde{\kappa}$ on $U\setminus\{v=0\}$, 
they satisfy $(\what{II}-\kappa\hat{I}~)\bV=\bm{0}$ and $(\what{II}-\tilde{\kappa}\hat{I}~)\wtil{\bV}=\bm{0}$, respectively. 
Solving the equation $(\what{II}-\kappa\hat{I}~)\bV=\bm{0}$, we have 
\begin{equation}\label{eq:p-vec1}
\bV=(-\what{M}+\kappa\what{F},\what{L}-\kappa(v\what{E}-e(u)\what{F})).
\end{equation}
Since $\kappa$ is bounded on $U$ and $\what{L}(p)\neq0$, $\bV$ can be defined on $U$ (\cite[(3.2)]{tera3}). 

On the other hand, we consider the equation $(\what{II}-\tilde{\kappa}\hat{I}~)\wtil{\bV}=\bm{0}$. 
This equation can be modified as 
\begin{equation}\label{eq:p-vec-mat}
\begin{pmatrix}
\lambda\what{L}-\hat{\kappa}(v\what{E}-e(u)\what{F}) & \lambda\what{M}-\hat{\kappa}\what{F}\\
v(\lambda\what{M}-\hat{\kappa}\what{F})-e(u)(\lambda\what{N}-\hat{\kappa}\what{G}) & \lambda\what{N}-\hat{\kappa}\what{G}
\end{pmatrix}
\begin{pmatrix}
\wtil{V}_1 \\ \wtil{V}_2 
\end{pmatrix}
=\begin{pmatrix} 0 \\ 0 \end{pmatrix}
\end{equation}
by multiplying the equation by $\lambda$ and factoring out $v$ from the first row. 
We note that $\hat{\kappa}$ does not vanish at $p$, 
and hence we may take $\wtil{\bV}$ on $U$ as 
\begin{equation}\label{eq:p-vec2}
\wtil{\bV}=(\lambda\what{N}-\hat{\kappa}\what{G},
-v(\lambda\what{M}-\hat{\kappa}\what{F})+e(u)(\lambda\what{N}-\hat{\kappa}\what{G})). 
\end{equation}
Remark that $\bV$ and $\wtil{\bV}$ as in \eqref{eq:p-vec1} and \eqref{eq:p-vec2} are linearly independent on $U$. 
Indeed, when we identify $\wtil{\bV}=(\wtil{V}_1,\wtil{V}_2)=(\wtil{V}_1,-v\what{V}_2+e(u)\wtil{V}_1)$ 
with $\wtil{\bV}=\wtil{V}_1\partial_u+(-v\what{V}_2+e(u)\wtil{V}_1)\partial_v$, 
where we set $\wtil{V}_1=\lambda\what{N}-\hat{\kappa}\what{G}$ and $\what{V}_2=\lambda\what{M}-\hat{\kappa}\what{F}$, 
it holds that 
\begin{equation}\label{eq:vec2}
\wtil{\bV}=\wtil{V}_1\eta-v\what{V}_2\partial_v.
\end{equation} 
Since $\bV$ is not parallel to $\eta$ along the $u$-axis, 
we have that $\bV$ and $\wtil{\bV}$ are linearly independent. 
Moreover, by $df(\eta)(=\eta{f})=vh$, we have 
$df(\wtil{\bV})=v\{(\lambda\what{N}-\hat{\kappa}\what{G})h-(\lambda\what{M}-\hat{\kappa}\what{F})f_v\}$. 
%In particular, $\wtil{\bV}$ is parallel to the null vector field $\eta$ along the $u$-axis. 

We now define $C^\infty$ maps $\x,\y\colon U\to\R^3$ by 
\begin{equation}\label{eq:vec-xy}
\x=-v(\what{M}-\kappa\what{F})h+((\what{L}+e(u)\what{M})-v\kappa\what{E})f_v,\ \ 
\y=(\lambda\what{N}-\hat{\kappa}\what{G})h-(\lambda\what{M}-\hat{\kappa}\what{F})f_v.
\end{equation}
We note that $\x$ and $\y$ satisfy $\x=df(\bV)$ and $df(\wtil{\bV})=v\y$, respectively.

\begin{lem}\label{lem:perp}
Under the above setting, $\x$ and $\y$ given by \eqref{eq:vec-xy} are perpendicular to each other. 
\end{lem}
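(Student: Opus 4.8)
The plan is to avoid the brute-force expansion of $\inner{\x}{\y}$ in the frame $\{h,f_v\}$ and instead exploit the fact, recorded just before the statement, that $\x$ and $\y$ are (up to the scalar $v$) the images under $df$ of the two principal directions $\bV$ and $\wtil{\bV}$, which belong to \emph{distinct} principal curvatures $\kappa$ and $\tilde{\kappa}$. The orthogonality of $\x$ and $\y$ should then drop out of the classical fact that eigenvectors of a self-adjoint operator associated to distinct eigenvalues are orthogonal.

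First I would work on the regular set $U\setminus\{v=0\}$, where $\hat{I}$ is positive definite and the shape operator $\hat{I}^{-1}\what{II}$ is self-adjoint with respect to $\hat{I}$ (because $\what{II}$ is symmetric). On this set $\bV$ and $\wtil{\bV}$ satisfy $(\what{II}-\kappa\hat{I}~)\bV=\0$ and $(\what{II}-\tilde{\kappa}\hat{I}~)\wtil{\bV}=\0$, so they are eigenvectors of this operator for $\kappa$ and $\tilde{\kappa}$. The self-adjointness then gives $(\kappa-\tilde{\kappa})\,\hat{I}(\bV,\wtil{\bV})=0$, and since $\hat{I}(\bV,\wtil{\bV})=\inner{df(\bV)}{df(\wtil{\bV})}$ is exactly the Euclidean inner product of the two tangent images (a direct consequence of $df(\bV)=\bV_1 f_u+\bV_2 f_v$ and the definition of $\hat{I}$), I would conclude $\inner{df(\bV)}{df(\wtil{\bV})}=0$ at every regular point where $\kappa\neq\tilde{\kappa}$.

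Next I would check that $\kappa\neq\tilde{\kappa}$ holds on a punctured neighborhood of $p$; this is where Proposition \ref{prop:princ} enters, since $\kappa$ extends to a bounded $C^\infty$ function whereas $\tilde{\kappa}$ is unbounded near $p$, so the two cannot coincide on a sufficiently small neighborhood off the singular set. Using $\x=df(\bV)$ and $df(\wtil{\bV})=v\y$, the identity $\inner{df(\bV)}{df(\wtil{\bV})}=0$ becomes $v\inner{\x}{\y}=0$ on $U\setminus\{v=0\}$, whence $\inner{\x}{\y}=0$ there. Finally, as $\x$ and $\y$ are $C^\infty$ maps defined on all of $U$ by \eqref{eq:vec-xy}, the function $\inner{\x}{\y}$ is continuous and vanishes on the dense open set $\{v\neq0\}$, so it vanishes identically on $U$.

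The main obstacle is relatively mild: the only points requiring care are the distinctness $\kappa\neq\tilde{\kappa}$ near $p$ and the passage across $\{v=0\}$ by continuity, while the remainder is the standard self-adjointness computation. As a fallback valid uniformly on $U$ (including the singular set), one could instead expand $\inner{\x}{\y}$ directly from \eqref{eq:vec-xy} using $\inner{h}{h}=\what{E}$, $\inner{h}{f_v}=\what{F}$, $\inner{f_v}{f_v}=\what{G}$, and then substitute $\hat{\kappa}=\lambda\tilde{\kappa}$ together with the two eigenvector relations to verify the cancellation; I expect the conceptual argument above to be considerably cleaner.
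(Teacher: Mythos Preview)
Your argument is correct and takes a genuinely different route from the paper. The paper proves the lemma by a direct expansion of $\inner{\x}{\y}$ in the frame $\{h,f_v\}$, then simplifies using the identities $2\lambda H=\lambda\kappa+\hat{\kappa}$ and $\lambda K=\kappa\hat{\kappa}$; this is exactly the ``fallback'' you sketch at the end. Your main argument instead invokes the classical orthogonality of principal directions on the regular set (self-adjointness of the shape operator with respect to $\hat I$), translates $\hat I(\bV,\wtil{\bV})=0$ into $\inner{df(\bV)}{df(\wtil{\bV})}=v\inner{\x}{\y}=0$, and then extends across $\{v=0\}$ by continuity of the smooth maps $\x,\y$ in \eqref{eq:vec-xy}. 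The one point you flag---distinctness of $\kappa$ and $\tilde{\kappa}$ off the singular set---can be made fully precise without appealing to unboundedness alone: since $\hat{\kappa}(p)\neq0$ and $\lambda(p)=0$, the smooth function $\lambda(\tilde{\kappa}-\kappa)=\hat{\kappa}-\lambda\kappa$ is nonzero at $p$, hence nonzero on a neighborhood, so $\kappa\neq\tilde{\kappa}$ on $U\setminus\{v=0\}$ after shrinking $U$ (equivalently, $\hat{k}_2\neq0$ near $p$). Your approach is more conceptual and makes the geometric reason transparent, at the cost of the extra density--continuity step; the paper's direct computation is uniform on $U$ from the outset but hides the underlying principal-direction orthogonality inside the algebra.
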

\begin{proof}
By a straightforward calculation with relations $2\lambda H=\lambda\kappa+\hat{\kappa}$ and $\lambda K=\kappa\hat{\kappa}$, 
where $K$ and $H$ are the Gaussian and the mean curvature, 
we have $\inner{\x}{\y}=0$.
%\footnote{\textcolor{red}{計算メモ：
%$vV_1\wtil{V}_1\what{E}+((V_2-e(u)V_1)\wtil{V}_1-vV_1\wtil{V}_2)\what{F}-(V_2-e(u)V_1)\wtil{V}_2\what{G}=0$. 
%つまり, $vV_1(\wtil{V}_1\what{E}-\wtil{V}_2\what{F})+(V_2-e(u)V_1)(\wtil{V}_1\what{F}-\wtil{V}_2\what{G})=0$. 
%他にも類似の関係式が成り立たないか調べる必要あり.
%}}
\end{proof}
By this lemma, $\{\x,\y,\nu\}$ gives an orthogonal frame along $f$. 
Moreover, when we set $\e_1=\x/|\x|$ and $\e_2=\y/|\y|$, 
then $\{\e_1,\e_2,\nu\}$ is an orthonormal frame along $f$. 
Particularly, the existence of curvature line coordinate systems for fronts
around non-degenerate singular points is known \cite[Lemma 1.3]{mura-ume}.
We note that a different approach to study singular surfaces using moving frame is known (\cite{ft-framed}). 

\begin{rem}\label{rmk:frame-ce}
If $p$ is a cuspidal edge of a front $f$, 
then principal vectors are given by 
\begin{equation*}\label{eq:ce-pdir}
\bV=(\wtil{N}-v\kappa\wtil{G},-\wtil{M}+\kappa\wtil{F}),\quad 
\wtil{\bV}=(v(\lambda\wtil{M}-\hat{\kappa}\wtil{F}),-\lambda\wtil{L}+\hat{\kappa}\wtil{E})
\end{equation*}
by taking a special adapted coordinate system $(U;u,v)$ 
and using functions as in \eqref{eq:fundamental-cusp} (\cite{tera2,tera3}), 
where $\kappa$ is the bounded principal curvature and $\hat{\kappa}=\lambda\til{\kappa}$. 
In this case, we see that 
$$df(\bV)=(\wtil{N}-v\kappa\wtil{G})f_u+v(-\wtil{M}+\kappa\wtil{F})g,\quad 
df(\wtil{\bV})=v\left((\lambda\wtil{M}-\hat{\kappa}\wtil{F})f_u+(-\lambda\wtil{L}+\hat{\kappa}\wtil{E})g\right).$$  
Then setting $\x,\y\colon U\to\R^3\setminus\{0\}$ by 
$\x= df(\bV)$ and $\y=(\lambda\wtil{M}-\hat{\kappa}\wtil{F})f_u+(-\lambda\wtil{L}+\hat{\kappa}\wtil{E})g$, 
one can see $df(\wtil{\bV})=v\y$, and 
it holds that $\inner{\x}{\y}=0$ by a similar calculation as the proof of Lemma \ref{lem:perp}.
Thus we can take an orthonormal frame $\{\e_1,\e_2,\nu\}$ along $f$ 
by setting $\e_1=\x/|\x|$ and $\e_2=\y/|\y|$.
\end{rem}
\begin{rem}
The pair of principal vectors $\bV$ and $\wtil{\bV}$ is a {\it curvature line frame generator} of a front (cf. \cite{st-behavior}). 
Moreover, the pair $\{\e_1,\e_2\}$ is a {\it curvature line frame corresponding to} $\{\bV,\wtil{\bV}\}$.  
\end{rem}

For the unit normal vector $\nu$ of a front $f$, 
the following properties hold.
\begin{lem}\label{lem:rod1}
	Let $f\colon\Sig\to\R^3$ be a front, $\nu$ its unit normal vector and $p$ a singular point of the second kind. 
	Let $\kappa$ and $\tilde{\kappa}$ be the bounded and unbounded principal curvature on a neighborhood $U$ of $p$, respectively. 
	Let $\bV$ and $\wtil{\bV}$ the corresponding principal vectors. 
	Then we have 
	$$d\nu(\bm{V})=-\kappa df(\bm{V}),\quad df(\wtil{\bV})=-\hat{\rho}d\nu(\wtil{\bV}),$$
	where $\hat{\rho}=\lambda/\hat{\kappa}$.
\end{lem}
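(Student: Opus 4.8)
The plan is to read both identities as the Rodrigues (Weingarten) relations for the two principal directions, reorganized so as to stay valid across the singular curve $\{v=0\}$. The common device will be the following vanishing principle: a smooth $\R^3$-valued map $\bm{w}$ along $f$ that is orthogonal to $f_u$, $f_v$ and $\nu$ at every regular point must vanish there, because $\{f_u,f_v,\nu\}$ is a basis of $\R^3$ away from $v=0$; since in an adapted coordinate system the singular set is exactly the $u$-axis, the regular points $\{v\neq0\}$ are dense in $U$, and such a map therefore vanishes on all of $U$ by continuity. I will use throughout that $\inner{\nu}{d\nu(\bm{W})}=0$ (from $|\nu|^2\equiv1$) and $\inner{\nu}{df(\bm{W})}=0$ (since $\nu$ is normal to $f$) for any tangent field $\bm{W}$, so orthogonality to $\nu$ is automatic and only orthogonality to $f_u$ and $f_v$ must be checked.

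For the first identity, I set $\bm{w}=d\nu(\bV)+\kappa\,df(\bV)$, which is smooth on $U$ because $\kappa$ is the bounded principal curvature and $\bV$ from \eqref{eq:p-vec1} is smooth. Expanding the entries of $\what{II}$ and $\hat{I}$ as the inner products $-\inner{f_u}{\nu_u}$, $\inner{f_u}{f_u}$, and so on, the first row of the eigenvector equation $(\what{II}-\kappa\hat{I})\bV=\bm{0}$ reads exactly $\inner{f_u}{\bm{w}}=0$ and the second row $\inner{f_v}{\bm{w}}=0$. Together with $\inner{\nu}{\bm{w}}=0$ this forces $\bm{w}=0$ at every regular point, hence $\bm{w}\equiv0$ on $U$ by the vanishing principle; this is the relation $d\nu(\bV)=-\kappa\,df(\bV)$.

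The second identity is where the only genuine difficulty lies: the principal curvature $\tilde{\kappa}$ attached to $\wtil{\bV}$ is unbounded along $\{v=0\}$, so $(\what{II}-\tilde{\kappa}\hat{I})\wtil{\bV}=\bm{0}$ cannot be used directly across the singular curve. I will clear this pole exactly as in the derivation of \eqref{eq:p-vec-mat}. Put $\bm{z}=\hat{\kappa}\,df(\wtil{\bV})+\lambda\,d\nu(\wtil{\bV})$, which is smooth on $U$ since $\hat{\kappa}=\lambda\tilde{\kappa}$ and $\lambda$ are smooth. Multiplying the eigenvector equation by $\lambda$ turns $\what{II}-\tilde{\kappa}\hat{I}$ into $\lambda\what{II}-\hat{\kappa}\hat{I}$, whose second row contracts $\wtil{\bV}$ to $\inner{f_v}{\bm{z}}=0$ on all of $U$; the combination $R_1+e(u)R_2$ of its rows computes $\inner{f_u+e(u)f_v}{\bm{z}}=\inner{vh}{\bm{z}}=v\inner{h}{\bm{z}}$, so after dividing the first row by $v$ as in \eqref{eq:p-vec-mat} one gets $\inner{h}{\bm{z}}=0$ on all of $U$. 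Because $\{h,f_v,\nu\}$ is a frame on the whole of $U$ (as $\det(h,f_v,\nu)(p)\neq0$), adjoining $\inner{\nu}{\bm{z}}=0$ yields $\bm{z}\equiv0$ pointwise, with no limiting argument. Dividing by $\hat{\kappa}$, which is nonzero near $p$, and using $\hat{\rho}=\lambda/\hat{\kappa}$ gives $df(\wtil{\bV})=-\hat{\rho}\,d\nu(\wtil{\bV})$, as claimed.

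The main obstacle is thus purely the unboundedness of $\tilde{\kappa}$ at $p$; everything else is the classical eigenvector–Weingarten bookkeeping. The key that removes it is to replace $\tilde{\kappa}$ by the smooth quantities $\hat{\kappa}=\lambda\tilde{\kappa}$ and $\hat{\rho}=1/\tilde{\kappa}=\lambda/\hat{\kappa}$ (legitimate because $\hat{\kappa}(p)\neq0$) and to test against the frame $\{h,f_v,\nu\}$, which remains nondegenerate across $\{v=0\}$; this is precisely what makes $df(\wtil{\bV})=-\hat{\rho}\,d\nu(\wtil{\bV})$ hold on the singular curve itself rather than only as a limit.
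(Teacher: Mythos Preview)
Your argument is correct and takes a genuinely different route from the paper's proof. The paper proceeds by explicit coordinate computation: it expands $d\nu(\bV)=V_1\nu_u+V_2\nu_v$ via the Weingarten formulas of Lemma~\ref{lem:weingarten2}, substitutes the concrete components of $\bV$ from \eqref{eq:p-vec1}, and verifies directly that the result equals $-\kappa\,df(\bV)$; for $\wtil{\bV}$ it does the analogous expansion, obtains $d\nu(\wtil{\bV})$ on $U\setminus\{v=0\}$, multiplies through by $\lambda$, and identifies the outcome with $-\hat\kappa\,df(\wtil{\bV})$. Your approach bypasses these computations entirely by reading the Rodrigues relations as the eigenvector equations $(\what{II}-\kappa\hat I)\bV=\bm0$ and $(\lambda\what{II}-\hat\kappa\hat I)\wtil{\bV}=\bm0$ rewritten as orthogonality of $\bm w$ and $\bm z$ to $f_u,f_v$ (equivalently $h,f_v$), then invoking that $\{h,f_v,\nu\}$ is a frame on all of $U$. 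This is cleaner and makes transparent why the identities extend across $\{v=0\}$. One small quibble: your claim ``with no limiting argument'' is slightly overstated, since the equation $(\lambda\what{II}-\hat\kappa\hat I)\wtil{\bV}=\bm0$ (or equivalently \eqref{eq:p-vec-mat}) is itself obtained from the eigenvector equation on $\{v\neq0\}$ and extended to $U$ by continuity; but this is harmless, and the paper's proof uses the same implicit extension.
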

\begin{proof}
	Let us take an adapted coordinate system $(U;u,v)$ centered at $p$. 
	We first show the relation between $df(\bV)$ and $d\nu(\bV)$. 
	Denote by $\bm{V}=V_1\partial_u+V_2\partial_v$ the principal vector 
	relative to the bounded principal curvature $\kappa$ (see \eqref{eq:p-vec1}). 
	Then $d\nu(\bm{V})$ can be written as 
	$$d\nu(\bm{V})=V_1\nu_u+V_2\nu_v=(V_1X_1+V_2Y_1)h+(V_1X_2+V_2Y_2)f_v$$
	by Lemma \ref{lem:weingarten2}, 
	where $X_i,Y_i$ ($i=1,2$) are functions in \eqref{eq:weingarten}. 
	By direct computations, we have 
	$V_1X_1+V_2Y_1=v\kappa(\what{M}-\kappa\what{F})$ 
	and $V_1X_2+V_2Y_2=-\kappa((\what{L}+e(u)\what{M})-v\kappa\what{E})$. 
	Thus it holds that 
	$$d\nu(\bm{V})=\kappa\left(v(\what{M}-\kappa\what{F})h-((\what{L}+e(u)\what{M})-v\kappa\what{E})f_v\right)
	=-\kappa df(\bV)(=-\kappa\x)$$
	by \eqref{eq:vec-xy}. 
	
	Next we consider the case of $df(\wtil{\bV})$. 
	By \eqref{eq:p-vec2}, we have 
	$$d\nu(\wtil{\bV})=(\lambda\what{N}-\hat{\kappa}\what{G})(\nu_u+e(u)\nu_v)
	-v(\lambda\what{M}-\hat{\kappa}\what{F})\nu_v.$$
	By \eqref{eq:weingarten}, it follows that 
	$$\nu_u+e(u)\nu_v=\frac{v\what{F}\what{M}-\what{G}(\what{L}+e(u)\what{M})}{\what{E}\what{G}-\what{F}^2}h
	-\frac{v\what{E}\what{M}-\what{F}(\what{L}+e(u)\what{M})}{\what{E}\what{G}-\what{F}^2}f_v
	$$
	holds. 
	Thus on the set of regular points $U\setminus\{v=0\}$, we have 
	$$d\nu(\wtil{\bV})=v\hat{\kappa}\left(-(\what{N}-\til{\kappa}\what{G})h+(\what{M}-\til{\kappa}\what{F})f_v\right).$$
	By multiplying $\lambda$, we obtain 
	$$\lambda d\nu(\wtil{\bV})=v\hat{\kappa}\left(-(\what{N}-\hat{\kappa}\what{G})h+(\what{M}-\hat{\kappa}\what{F})f_v\right)
	=-v\hat{\kappa}\y=-\hat{\kappa}df(\wtil{\bV}).$$
	Since $\hat{\kappa}\neq0$ on $U$, we have 
	$$df(\wtil{\bV})=-\hat{\rho}d\nu(\wtil{\bV})$$
	on $U$. 
\end{proof}
This lemma seems to correspond to the Rodrigues' Theorem (cf. \cite[Theorem 10.2]{porteous-book}). 
We note that if $f$ is a front at a singular point of the second kind $p$, 
$d\nu(\wtil{\bV})=\wtil{V}_1d\nu(\eta)\neq0$ holds at $p$. 
\begin{rem}\label{rem:dn}
	If we take an adapted coordinate system $(U;u,v)$ around a singular point of the second kind $p$ of a front $f$, 
	then the signed area density $\lambda$ of $f$ satisfies $\lambda(u,0)=0$. 
	Thus there exists a $C^\infty$ function $\hat{\lambda}$ on $U$ such that $\lambda=v\hat{\lambda}$. 
	By the non-degeneracy, $\lambda_v(p)\neq0$, and hence $\hat{\lambda}(p)\neq0$. 
	Thus we have 
	$$d\nu(\wtil{\bV})=-\frac{\hat{\kappa}}{\hat{\lambda}}\y$$
	on $U$. 
	Moreover, it holds that 
	$$d\nu(\bV)=-\kappa\x=-\kappa|\x|\e_1,\quad 
	d\nu(\wtil{\bV})=-\frac{\hat{\kappa}}{\hat{\lambda}}\y=-\frac{\hat{\kappa}}{\hat{\lambda}}|\y|\e_2,$$
	where $\e_1=\x/|\x|$ and $\e_2=\y/|\y|$. 
\end{rem}
%%%%%SECTION 4%%%%%
\section{Sub-parabolic points}\label{sec:subpara}
For a regular surface, one can define a ridge point and a sub-parabolic point of the surface 
by using principal curvatures and corresponding principal vectors. 
They are related to singularities and geometrical properties of focal surfaces (cf. \cite{porteous-normal,bw-folding,morris}). 
In this section, we investigate a sub-parabolic point of a front. 
First, we give definitions of a ridge point and a sub-parabolic point for fronts 
using the principal curvatures and corresponding principal vectors. 
\begin{dfn}
Let $\wtil{\bV}$ (resp. $\bV$) be a principal vector 
with respect to the unbounded principal curvature $\tilde{\kappa}$ 
(resp. the bounded principal curvature $\kappa$)
of a front $f$ with a non-degenerate singular point $p$. 
Then $p$ is a {\it sub-parabolic point} (resp. a {\it ridge point}) of $f$ with respect to $\kappa$ 
if the directional derivative $\wtil{\bV}\kappa$ (resp. ${\bV}\kappa$) of $\kappa$ in the direction $\wtil{\bV}$ 
(resp. ${\bV}$) vanishes at $p$. 
\end{dfn}
 
By the property of the principal vector relative to the unbounded principal curvature, 
the following characterization for a sub-parabolic point holds.  
\begin{prop}\label{prop:subpara0}
Let $f\colon\Sig\to\R^3$ be a front and $p\in\Sig$ a non-degenerate singular point. 
Then $p$ is also a sub-parabolic point of $f$ if and only if $\eta\kappa=0$ at $p$, 
where $\eta\kappa$ means the directional derivative of the bounded principal curvature $\kappa$ of $f$ 
in the direction a null vector field $\eta$. 
\end{prop}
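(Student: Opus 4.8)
The plan is to reduce the vanishing of the directional derivative $\wtil{\bV}\kappa$ at $p$ to that of $\eta\kappa$ at $p$ by exploiting the explicit decomposition of the principal vector $\wtil{\bV}$ recorded in \eqref{eq:vec2}. Throughout I would fix an adapted coordinate system $(U;u,v)$ centered at $p$, so that the $u$-axis is the singular curve, $\eta=\partial_u+e(u)\partial_v$ is a null vector field, and $\what{G}(p)=|f_v(p)|^2=1$. By Proposition \ref{prop:princ} the bounded principal curvature $\kappa$ extends to a $C^\infty$ function on all of $U$, so its directional derivatives along $\eta$ and $\partial_v$ make sense at every point, including $p$.

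First I would recall from \eqref{eq:vec2} that $\wtil{\bV}=\wtil{V}_1\eta-v\what{V}_2\partial_v$, where $\wtil{V}_1=\lambda\what{N}-\hat{\kappa}\what{G}$ and $\what{V}_2=\lambda\what{M}-\hat{\kappa}\what{F}$. Applying this as a first-order differential operator to $\kappa$ yields the identity
$$\wtil{\bV}\kappa=\wtil{V}_1\,(\eta\kappa)-v\,\what{V}_2\,(\partial_v\kappa).$$
This is the heart of the matter: the second summand carries an explicit factor $v$, and $p$ lies on the $u$-axis, where $v=0$.

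Next I would evaluate at $p$. Since $\lambda$ vanishes identically along the singular curve, $\lambda(p)=0$, so $\wtil{V}_1(p)=-\hat{\kappa}(p)\what{G}(p)=-\hat{\kappa}(p)$, and the $v$-term drops out at $p$. Hence
$$\wtil{\bV}\kappa(p)=-\hat{\kappa}(p)\,\eta\kappa(p).$$
By \cite[Remark 3.2]{tera3} we have $\hat{\kappa}(p)\neq0$, so $\wtil{\bV}\kappa(p)=0$ if and only if $\eta\kappa(p)=0$, which is exactly the asserted equivalence.

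There is no genuine obstacle here; the conclusion is essentially a one-line consequence of \eqref{eq:vec2}, and the only points requiring attention are bookkeeping ones. One must confirm that $\kappa$ is $C^\infty$ up to $p$ so that $\eta\kappa$ and $\partial_v\kappa$ are meaningful there, which Proposition \ref{prop:princ} supplies. One should also note that both conditions $\wtil{\bV}\kappa(p)=0$ and $\eta\kappa(p)=0$ are insensitive to rescaling $\wtil{\bV}$ and $\eta$ by nowhere-vanishing functions, since for $W'=aW$ one has $W'\kappa(p)=a(p)\,W\kappa(p)$; this makes the statement well posed despite the freedom in choosing a null vector field and a principal vector.
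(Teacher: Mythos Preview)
Your argument is the same idea as the paper's: along the singular locus the principal vector $\wtil{\bV}$ is a nonzero multiple of the null direction $\eta$, so $\wtil{\bV}\kappa(p)=0$ iff $\eta\kappa(p)=0$. The computation you give for the second-kind case is correct.

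The gap is coverage, not method. The proposition is stated for an arbitrary non-degenerate singular point, which includes both singular points of the first kind (cuspidal edges) and of the second kind. Your choice of adapted coordinates with $\eta=\partial_u+e(u)\partial_v$, and your appeal to \eqref{eq:vec2}, are valid only for singular points of the second kind; for cuspidal edges the adapted coordinate system has $\eta=\partial_v$ and \eqref{eq:vec2} does not apply. The paper's proof handles both cases by also invoking Remark~\ref{rmk:frame-ce}, where for a cuspidal edge one has $\wtil{\bV}=(v(\lambda\wtil{M}-\hat{\kappa}\wtil{F}),-\lambda\wtil{L}+\hat{\kappa}\wtil{E})$, which at $p$ (where $v=0$ and $\lambda=0$) reduces to $(0,\hat{\kappa}(p)\wtil{E}(p))$, again a nonzero multiple of $\eta=\partial_v$. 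To complete your proof you should either add this case or phrase the argument in a way that covers both types of adapted coordinates at once.
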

\begin{proof}
	Let us take an adapted coordinate system $(U;u,v)$ around $p$. 
	Then the principal vector $\wtil{\bV}$ associated to the unbounded principal curvature is parallel to $\eta$ 
	along the $u$-axis by \eqref{eq:vec2} and Remark \ref{rmk:frame-ce}. 
	Thus we have the assertion by the definition of a sub-parabolic point. 
\end{proof}
For the case of a cuspidal edge $p$ of a front $f$, the condition for $p$ being a sub-parabolic point 
is characterized by the geometric invariants (see \cite[Proposition 2.8]{tera2}). 
On the other hand, for the case of a singular point of the second kind, we have the following 
characterization relating the behavior of the Gaussian curvature of a front. 

\begin{prop}\label{prop:subp-rational}
	Let $f\colon\Sig\to\R^3$ be a front and $p\in\Sig$ a singular point of the second kind. 
	If the Gaussian curvature $K$ of $f$ is rationally continuous, 
	then $p$ is a sub-parabolic point. 
\end{prop}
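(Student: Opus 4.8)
The plan is to reduce the statement, via Proposition \ref{prop:subpara0}, to the single derivative condition $\eta\kappa(p)=0$ for the bounded principal curvature $\kappa$, and then to read this vanishing off from the hypothesis of rational continuity of $K$. Fix a strongly adapted coordinate system $(U;u,v)$ centered at $p$ and take the null field $\eta=\partial_u+e(u)\partial_v$ with $e(0)=0$. Because $e(0)=0$, one has $\eta\kappa(p)=\kappa_u(p)=\tfrac{d}{du}\big|_{u=0}\kappa(u,0)$, so it suffices to control $\kappa$ along the singular curve $\gamma$ (the $u$-axis) to first order at $p$.

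The next step is to make $\kappa$ explicit along $\gamma$. Using the relation $\lambda K=\kappa\hat{\kappa}$ together with $\hat{\kappa}(p)\neq0$ (Section \ref{sec:principal}) and the expression \eqref{eq:front_GH} for $K$, write $\lambda=v\hat{\lambda}$ with $\hat{\lambda}(p)\neq0$ and cancel $v$ to obtain a smooth representative of $\lambda K$. Restricting to $v=0$ and using that $\what{L}+e\what{M}\neq0$ along the $u$-axis (the front condition) gives
\[
\kappa(u,0)=c(u)\,\what{N}(u,0),\qquad
c(u)=\frac{\hat{\lambda}(u,0)\,(\what{L}+e\what{M})(u,0)}{(\what{E}\what{G}-\what{F}^2)(u,0)\,\hat{\kappa}(u,0)},
\]
where $c$ is smooth and $c(u)\neq0$ near $u=0$; this is consistent with $\kappa(p)=\kappa_\nu(p)$ and $\kappa_\nu(p)=\what{N}(p)$ (Lemma \ref{lem:relation}). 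Differentiating at $p$ yields $\eta\kappa(p)=\kappa_u(p)=c'(0)\what{N}(p)+c(0)\,\eta\what{N}(p)$, so it remains to establish $\what{N}(p)=0$ and $\eta\what{N}(p)=0$.

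This is where the hypothesis enters. From \eqref{eq:front_GH} the residual function is $\lim_{v\to0}vK(u,v)=\what{N}(u,0)(\what{L}+e\what{M})(u,0)/(\what{E}\what{G}-\what{F}^2)(u,0)$, so $\what{N}(u,0)$ is, up to a nonvanishing factor, the pole coefficient of $K$ along $\gamma$. By Fact \ref{fact:rational-bounded}, rational boundedness already forces $\kappa_\nu(p)=\what{N}(p)=0$; rational continuity is the continuous refinement of this, and the key point is that it additionally annihilates the first-order part of the pole coefficient along $\gamma$, that is $\eta\what{N}(p)=0$. Granting this, the displayed formula gives $\eta\kappa(p)=0$, and Proposition \ref{prop:subpara0} then shows that $p$ is a sub-parabolic point.

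I expect the main obstacle to be precisely this last translation: converting the analytic definition of rational continuity (\cite{msuy}) into the geometric vanishing $\eta\what{N}(p)=0$ of the pole coefficient of $K$ along the singular curve, rather than merely the pointwise $\what{N}(p)=0$ supplied by rational boundedness. The algebraic reduction above is routine once the relations $\lambda K=\kappa\hat{\kappa}$ and \eqref{eq:front_GH} are in hand; the real content of the proposition lies in extracting first-order information about $\kappa_\nu$ from rational continuity.
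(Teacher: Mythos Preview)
Your approach is essentially the paper's: both reduce via Proposition \ref{prop:subpara0} to the single condition $\eta\kappa(p)=\kappa_u(p)=0$ in a (strongly) adapted coordinate system, and both obtain this vanishing from the known characterization of rational continuity of $K$ at a singular point of the second kind. The only substantive difference is that the paper does not pass through $\what{N}$ at all: it introduces the co-vector $\omega_\nu(p)=\hat{\kappa}_\nu'(p)\,du$, observes $\kappa_u(p)=\hat{\kappa}_\nu'(p)$, and then invokes \cite[Theorem 4.4]{msuy}, which states that $K$ is rationally continuous at $p$ if and only if $\kappa_\nu(p)=0$ and $\omega_\nu(p)=0$. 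This immediately gives $\kappa_u(p)=0$.

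What you identify as ``the main obstacle'' is therefore not a gap in your argument but exactly the input the paper cites from \cite{msuy}. Your translation of it into the language of $\what{N}$ is correct: since $\kappa(u,0)=c(u)\what{N}(u,0)$ with $c(0)\neq0$ and $\kappa_\nu(p)=\what{N}(p)$, the pair $(\kappa_\nu(p)=0,\ \hat{\kappa}_\nu'(p)=0)$ is equivalent to $(\what{N}(p)=0,\ \what{N}_u(p)=0)$. But this detour is unnecessary; once you accept \cite[Theorem 4.4]{msuy} you get $\kappa_u(p)=0$ directly, without ever writing $\kappa(u,0)$ as $c(u)\what{N}(u,0)$. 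So your proof is correct in outline, and the step you flagged is closed by citation rather than by a new computation.
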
	
For a notion of the {\it rational continuity}, see \cite[Definition 3.4]{msuy}. 
\begin{proof}
	Let us take an adapted coordinate system $(U;u,v)$ centered at $p$. 
	Then we define a co-vector $\omega_\nu(p)$ by 
	$\omega_\nu(p)=\hat{\kappa}_\nu'(p)du\in T_p^{\ast}\Sig,$
	where $\hat{\kappa}_\nu'(p)=d\kappa_\nu(u)/du|_{u=0}$. 
	We note that $\hat{\kappa}_\nu'(p)$ depends on the parameter $u$ of the singular curve, 
	but $\omega_\nu$ does not depend on $u$ (see \cite[Page 270]{msuy}). 
	Since $\kappa_u(p)=\hat{\kappa}_\nu'(p)$ holds, 
	the co-vector $\omega_\nu(p)$ may be written as $\omega_\nu(p)=\kappa_u(p)du$. 
	Moreover, by Proposition \ref{prop:subpara0} and $\eta=\partial_u$ at $p$, 
	$\omega_\nu(p)=0$ if and only if $p$ is a sub-parabolic point of $f$. 
	On the other hand, it is known that the Gaussian curvature $K$ of $f$ is rationally continuous at $p$ 
	if and only if $\kappa_\nu(p)=\omega_{\nu}(p)=0$ (\cite[Theorem 4.4]{msuy}). 
	Thus we have the conclusion.
\end{proof}

We next consider the case that the Gaussian curvature $K$ of a front $f$ 
is bounded near a non-degenerate singular point $p$. 
In such a case, we obtain the following.
\begin{prop}\label{prop:subpara}
Let $f\colon\Sig\to\R^3$ be a front and $p\in\Sig$ a non-degenerate singular point. 
Suppose that the Gaussian curvature $K$ of $f$ is bounded 
on a sufficiently small neighborhood $U$ of $p$. 
\begin{enumerate}
	\item When $p$ is a cuspidal edge, $p$ is a sub-parabolic point of $f$ if and only if $K(p)=0$.
	\item When $p$ is of the second kind, then $p$ is a sub-parabolic point of $f$.
\end{enumerate}
\end{prop}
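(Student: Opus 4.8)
The plan is to reduce both parts to Proposition~\ref{prop:subpara0}, by which $p$ is a sub-parabolic point exactly when $\eta\kappa(p)=0$, where $\kappa$ is the bounded principal curvature and $\eta$ a null vector field. The driving observation, common to both cases, is that boundedness of $K$ forces $\kappa$ to vanish identically along the singular curve; the two cases then differ only in whether $\eta$ is tangent or transverse to that curve.

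First I would establish this vanishing. For a singular point of the second kind, take a strongly adapted coordinate system; from the expression $K=(\what{N}(\what{L}+e(u)\what{M})-v\what{M}^2)/(v(\what{E}\what{G}-\what{F}^2))$ in \eqref{eq:front_GH}, boundedness near $p$ forces the numerator to vanish at $v=0$, and since $\what{L}+e(u)\what{M}\neq0$ by the front condition this gives $\what{N}(u,0)\equiv0$. Expanding \eqref{eq:princ-second} shows that the bounded principal curvature limits to $\what{N}/\what{G}$ along $\{v=0\}$, so $\kappa(u,0)\equiv0$. For a cuspidal edge the same vanishing is immediate: every nearby singular point is again a cuspidal edge and hence admissible, so $\kappa(u,0)=\kappa_\nu(u,0)$, and boundedness of $K$ makes $\kappa_\nu$ vanish along the singular curve by \cite[Theorem~3.9]{msuy}.

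For assertion (2) this already concludes the proof: in an adapted coordinate system $\eta=\partial_u+e(u)\partial_v$ with $e(0)=0$, so $\eta|_p=\partial_u$ is tangent to the singular curve, and differentiating $\kappa(u,0)\equiv0$ in $u$ gives $\eta\kappa(p)=\kappa_u(p)=0$. Hence $p$ is sub-parabolic by Proposition~\ref{prop:subpara0}.

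Assertion (1) is the substantive case, since for a cuspidal edge $\eta=\partial_v$ is transverse to the singular curve and $\kappa(u,0)\equiv0$ controls only $\kappa_u(p)$, not $\kappa_v(p)$. Here I would use the factorization $K=\kappa\tilde{\kappa}=\kappa\hat{\kappa}/\lambda$ together with $\lambda=v\hat{\lambda}$, where $\hat{\kappa}(p)\neq0$ and $\hat{\lambda}(p)=\lambda_v(p)=\eta\lambda(p)\neq0$ by the cuspidal edge condition. Because $\kappa(u,0)=0$, a Taylor expansion gives $\kappa=v\kappa_v(u,0)+O(v^2)$, and passing to the limit $v\to0$ in $K=\kappa\hat{\kappa}/(v\hat{\lambda})$ yields $K(p)=\kappa_v(p)\hat{\kappa}(p)/\hat{\lambda}(p)$. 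As $\hat{\kappa}(p)$ and $\hat{\lambda}(p)$ are nonzero, $K(p)=0$ if and only if $\kappa_v(p)=\eta\kappa(p)=0$, which by Proposition~\ref{prop:subpara0} means $p$ is sub-parabolic. The hard part will be this limiting computation for the cuspidal edge: one must check that the $O(v^2)$ term and the $-v\what{M}^2$ contribution do not survive division by $v$ and that $\lambda/v$ stays bounded away from zero, so that the transverse derivative $\kappa_v(p)$ is isolated cleanly; by contrast, once the identical vanishing of $\kappa$ is in hand, the second-kind case is essentially immediate.
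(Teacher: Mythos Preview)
Your argument is correct in both parts. For part~(2), your approach is essentially identical to the paper's: both use that boundedness of $K$ forces $\kappa$ to vanish along the singular curve, then exploit that $\wtil{\bV}$ (equivalently $\eta$) is tangent to $\{v=0\}$ at $p$ to conclude $\wtil{\bV}\kappa(p)=0$. The paper writes $\kappa=vk$ and evaluates $\wtil{\bV}\kappa$ directly from \eqref{eq:vec2}; you instead invoke Proposition~\ref{prop:subpara0} first, but this is the same reduction.

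For part~(1), however, you take a genuinely different route. The paper's proof is a two-line citation: it quotes \cite[Proposition~2.8]{tera2} for the characterization that $p$ is sub-parabolic if and only if $4\kappa_t^2+\kappa_s\kappa_c^2=0$, and Fact~\ref{fact:K-bounded} for the identity $4K(p)=-(4\kappa_t^2+\kappa_s\kappa_c^2)$, and simply matches the two. Your argument avoids these geometric invariants entirely, working instead with the factorization $K=\kappa\hat\kappa/\lambda$ and the smooth factorizations $\lambda=v\hat\lambda$, $\kappa=vk$ to obtain $K(p)=\kappa_v(p)\hat\kappa(p)/\hat\lambda(p)$ directly. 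This is more self-contained and elementary---it does not rely on the external computation of $K$ in terms of $\kappa_t,\kappa_s,\kappa_c$---at the cost of not exhibiting which specific invariants control the phenomenon. One small remark: your worry about ``the $-v\what{M}^2$ contribution'' is misplaced, since that term appears in the second-kind formula \eqref{eq:front_GH}, not in the cuspidal-edge case; your actual argument for~(1) uses only $K=\kappa\tilde\kappa$ and does not touch that expression.
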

\begin{proof}
First we show the case of a cuspidal edge. 
In this case, it is known that $p$ is a sub-parabolic point of $f$ 
if and only if $4\kappa_t^2+\kappa_s\kappa_c^2=0$ holds at $p$ (\cite[Proposition 2.8]{tera2}). 
On the other hand, the Gaussian curvature satisfies $4K=-(4\kappa_t^2+\kappa_s\kappa_c^2)$ at $p$ 
by Fact \ref{fact:K-bounded}. 
This shows the first assertion. 

We next consider the second assertion. 
Let us take an adapted coordinate system $(u,v)$ on $U$. 
Since $K$ is bounded on $U$, the bounded principal curvature $\kappa$ satisfies $\kappa(u,0)=\kappa_\nu(u)=0$. 
Thus there exists a $C^\infty$ function $k\colon U\to\R$ such that $\kappa=vk$. 
Since the principal vector $\wtil{\bm{V}}$ relative to the unbounded principal curvature can be written 
as $\wtil{\bm{V}}=\wtil{V}_1\eta-v\what{V}_2\partial_v$ (see \eqref{eq:vec2}), 
it holds that 
$\wtil{\bm{V}}\kappa=\wtil{V}_1\eta (vk)-v\what{V}_2(k+vk_v)=\wtil{V}_1(vk_u+e(u)(k+vk_v))-v\what{V}_2(k+vk_v)$. 
Therefore we have $\wtil{\bV}\kappa=0$ at $p$ since $e(0)=0$. 
Thus we get the conclusion.
\end{proof}

For the case of a front with a cuspidal edge $p$, 
it is known that the Gaussian curvature of the focal surface 
associated to the unbounded principal curvature vanishes at $p$ 
if and only if $p$ is a sub-parabolic point (\cite[Corollary 3.9]{tera2}). 
For the case of a singular point of the second kind, 
we will show relation between behavior of the Gaussian curvature of the focal surface 
associated to the unbounded principal curvature and a sub-parabolic point 
of the initial front in the next section. 
%%%%%SECTION 5%%%%%
\section{Focal surfaces of fronts}\label{sec:focal}
We consider singularities and geometric properties of focal surfaces of fronts 
associated to unbounded principal curvatures. 
In particular, we focus on the case that the initial front has 
a singular points of the second kind. 
%For the case of cuspidal edges, see \cite[Section 3.4]{tera2}. 

Let $f\colon\Sig\to\R^3$ be a front, $\nu$ its Gauss map 
and $p\in \Sig$ a non-degenerate singular point of $f$. 
Take an adapted coordinate system $(U;u,v)$ centered at $p$. 
Then we consider a map $\F\colon U\times\R\to\R^3$ given by 
$$\F(u,v,w)= f(u,v)+w\nu(u,v).$$
The map $\F$ is called a {\it normal congruence} of $f$ (cf. \cite{ist-line}). 
We consider the singular set and the singular value set of $\F$. 
By direct calculations using the Weingarten formula, 
we have 
$$\det(\F_u,\F_v,\F_w)=(1-w\kappa)(\lambda-\hat{\kappa}w).$$
Thus the set of singular points $S(\F)$ of $\F$ is the union of 
$S_1(\F)=\{(u,v,w)\ |\ 1-\kappa(u,v)w=0\}$ 
and $S_2(\F)=\{(u,v,w)\ |\ \lambda(u,v)-\hat{\kappa}(u,v)w=0\}$. 
Hence the image $\F(S(\F))$ of $S(\F)$ by $\F$ is 
\begin{align*}
\F(S(\F))&=\left\{f(u,v)+\rho(u,v)\nu(u,v)\ |\ (u,v)\in U,\ w=\rho(u,v)\right\}\\
&\quad \cup \left\{f(u,v)+\hat{\rho}(u,v)\nu(u,v)\ |\ (u,v)\in U,\ w=\hat{\rho}(u,v)\right\},
\end{align*}
where $\rho=1/\kappa$ and $\hat{\rho}=\lambda/\hat{\kappa}$. 
We define $C^\infty$ maps $C,\what{C}\colon U\to\R^3$ by 
\begin{equation}\label{eq:focal}
C=f+\rho\nu,\quad 
\what{C}= f+\hat{\rho}\nu.
\end{equation}
Then one can notice immediately that the singular value set of $\F$ coincides with the union of the 
images of $C$ and $\what{C}$: $\F(S(F))=\im{C}\cup\im{\what{C}}$. 
We call $C$ and $\what{C}$ {\it focal surfaces} (or {\it caustics}) of $f$. 
In particular, we call $\what{C}$ the {\it focal surface associated to the unbounded principal curvature} of $f$. 
We remark that $C$ cannot be defined near a singular point 
when the Gaussian curvature of an original front is bounded near the singular point. 
However, we can define and consider $\what{C}$ even if the Gaussian curvature is bounded. 
We note that singularities of $C$ for a front with non-degenerate singular points are studied in \cite{tera2}.
Thus we focus on $\what{C}$ of a front $f$ in the following of this paper. 

\subsection{Singularities of $\what{C}$}
We first investigate singularities of $\what{C}$. 
When a front $f$ has a cuspidal edge $p$, 
then $\what{C}$ is regular at $p$ (\cite[Proposition 3.7]{tera2}). 
Thus we treat the case that a front $f$ has a 
singular point of the second kind $p$ in the following of this subsection. 

\begin{lem}\label{lem:normal}
The map $\y$ as in \eqref{eq:vec-xy} is a normal vector to the focal surface $\what{C}$. 
\end{lem}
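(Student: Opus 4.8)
The plan is to show that $\y$ is orthogonal to the two partial derivatives $\what{C}_u$ and $\what{C}_v$ of the focal surface $\what{C}=f+\hat{\rho}\nu$, where $\hat{\rho}=\lambda/\hat{\kappa}$. Differentiating, we obtain
\begin{equation*}
\what{C}_u=f_u+\hat{\rho}_u\nu+\hat{\rho}\nu_u,\qquad
\what{C}_v=f_v+\hat{\rho}_v\nu+\hat{\rho}\nu_v.
\end{equation*}
Since $\nu$ is perpendicular to $f_u$, $f_v$ and (being a unit vector field) to its own derivatives $\nu_u,\nu_v$, and since $\y$ is a combination of $h$ and $f_v$ by \eqref{eq:vec-xy}, we have $\inner{\y}{\nu}=0$. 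Hence the $\hat{\rho}_u\nu$ and $\hat{\rho}_v\nu$ terms drop out of $\inner{\y}{\what{C}_u}$ and $\inner{\y}{\what{C}_v}$, and it suffices to show
\begin{equation*}
\inner{\y}{f_u+\hat{\rho}\nu_u}=0,\qquad
\inner{\y}{f_v+\hat{\rho}\nu_v}=0.
\end{equation*}

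The key observation is that these two conditions are exactly a differential-form version of Lemma \ref{lem:rod1}. First I would rewrite the combination $\wtil{V}_1(\,\cdot\,)_u$-plus-$\wtil{V}_2(\,\cdot\,)_v$ using the principal vector $\wtil{\bV}=(\wtil{V}_1,\wtil{V}_2)$: contracting $\what{C}_u,\what{C}_v$ against $\wtil{\bV}$ gives $d\what{C}(\wtil{\bV})=df(\wtil{\bV})+\hat{\rho}\,d\nu(\wtil{\bV})+(\wtil{\bV}\hat{\rho})\nu$. By Lemma \ref{lem:rod1} we have $df(\wtil{\bV})=-\hat{\rho}\,d\nu(\wtil{\bV})$, so the tangential part cancels and $d\what{C}(\wtil{\bV})$ is parallel to $\nu$; since $\inner{\y}{\nu}=0$, this already yields $\inner{\y}{d\what{C}(\wtil{\bV})}=0$, i.e. $\y$ is orthogonal to the $\wtil{\bV}$-direction of the image of $d\what{C}$. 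This handles one of the two tangent directions of $\what{C}$ cleanly.

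For the remaining direction I would use the orthogonal frame $\{\x,\y,\nu\}$ from Lemma \ref{lem:perp}. Since $\bV$ and $\wtil{\bV}$ are linearly independent on $U$, the two vectors $d\what{C}(\bV)$ and $d\what{C}(\wtil{\bV})$ span the tangent plane of $\what{C}$ at regular points. It therefore remains to check $\inner{\y}{d\what{C}(\bV)}=0$. Computing $d\what{C}(\bV)=df(\bV)+\hat{\rho}\,d\nu(\bV)+(\bV\hat{\rho})\nu=\x+\hat{\rho}\,d\nu(\bV)+(\bV\hat{\rho})\nu$, and using Lemma \ref{lem:rod1} again in the form $d\nu(\bV)=-\kappa\,df(\bV)=-\kappa\x$, this reduces to $(1-\hat{\rho}\kappa)\x+(\bV\hat{\rho})\nu$, which is a combination of $\x$ and $\nu$ only. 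Because $\inner{\y}{\x}=0$ by Lemma \ref{lem:perp} and $\inner{\y}{\nu}=0$, we conclude $\inner{\y}{d\what{C}(\bV)}=0$. Thus $\y$ is orthogonal to both spanning tangent directions and is a normal to $\what{C}$.

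The main obstacle I anticipate is book-keeping at the singular set $\{v=0\}$ and at points where $\what{C}$ itself is singular: the argument above produces a tangent frame $\{d\what{C}(\bV),d\what{C}(\wtil{\bV})\}$ that may degenerate, and $d\what{C}(\wtil{\bV})$ carries a factor of $v$ through $df(\wtil{\bV})=v\y$. I would handle this by phrasing the whole computation as polynomial identities in the $C^\infty$ functions $\what{E},\what{F},\what{G},\what{L},\what{M},\what{N},\lambda,\hat{\kappa},\kappa,e(u),v$, so that $\inner{\y}{\what{C}_u}$ and $\inner{\y}{\what{C}_v}$ are shown to vanish identically as smooth functions on all of $U$ rather than merely on the open set of regular points; the relations $\lambda K=\kappa\hat{\kappa}$ and $2\lambda H=\lambda\kappa+\hat{\kappa}$ used in Lemma \ref{lem:perp} are what make the cancellations exact, and continuity then extends the normality statement across $\{v=0\}$.
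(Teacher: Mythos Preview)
Your argument is correct and takes a genuinely different route from the paper. The paper proves Lemma~\ref{lem:normal} by brute-force expansion in the adapted $(u,v)$-coordinates: it computes $\inner{\y}{f_u}$, $\inner{\y}{f_v}$, $\inner{\y}{\nu_u}$, $\inner{\y}{\nu_v}$ explicitly in terms of $\what{E},\what{F},\what{G},\what{L},\what{M},\what{N}$ and then cancels using the identity $\hat{\kappa}-2\lambda H+\lambda\hat{\rho}K=0$. You instead work in the principal-direction frame $\{\bV,\wtil{\bV}\}$ and let Lemmas~\ref{lem:rod1} and~\ref{lem:perp} do all the work: $d\what{C}(\wtil{\bV})$ reduces to a multiple of $\nu$ by the Rodrigues relation, while $d\what{C}(\bV)$ reduces to a combination of $\x$ and $\nu$; orthogonality of $\y$ to both follows immediately. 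Your approach is shorter and more conceptual, and it makes transparent that the normality of $\y$ is just a repackaging of the Rodrigues identities; the paper's direct computation, on the other hand, is self-contained and does not forward-reference the role of $\wtil{\bV}$ as null vector field of $\what{C}$ (which the paper only establishes later in Lemma~\ref{lem:corank}).

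One small clean-up: your last paragraph about ``obstacles'' is unnecessary. Since $\bV$ and $\wtil{\bV}$ form a smooth frame of $TU$ everywhere on $U$ (as noted just before \eqref{eq:vec2}), the identities $\inner{\y}{d\what{C}(\bV)}=0$ and $\inner{\y}{d\what{C}(\wtil{\bV})}=0$ immediately give $\inner{\y}{\what{C}_u}=\inner{\y}{\what{C}_v}=0$ as identities of smooth functions on all of $U$, with no need for a separate continuity argument across $\{v=0\}$ or at singular points of $\what{C}$.
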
 
\begin{proof}
Let us take a strongly adapted coordinate system $(U;u,v)$ around $p$.  
By direct calculations, we have 
$$
\what{C}_u=f_u+\hat{\rho}\nu_u+\hat{\rho}_u\nu,\quad 
\what{C}_v=f_v+\hat{\rho}\nu_v+\hat{\rho}_v\nu.
$$
On the other hand, we get 
\begin{align*}
\inner{\y}{f_u}&=(\lambda\what{N}-\hat{\kappa}\what{G})(v\what{E}-e(u)\what{F})
-(\lambda\what{M}-\hat{\kappa}\what{F})(v\what{F}-e(u)\what{G}),\\
\inner{\y}{f_v}&=(\lambda\what{N}-\hat{\kappa}\what{G})\what{F}
-(\lambda\what{M}-\hat{\kappa}\what{F})\what{G},\\
\inner{\y}{\nu_u}&=-(\lambda\what{N}-\hat{\kappa}\what{G})\what{L}+
(\lambda\what{M}-\hat{\kappa}\what{F})(v\what{M}-e(u)\what{N}),\\
\inner{\y}{\nu_v}&=-(\lambda\what{N}-\hat{\kappa}\what{G})\what{M}
+(\lambda\what{M}-\hat{\kappa}\what{F})\what{N}.
\end{align*}
Therefore by $\hat{\rho}\hat{\kappa}=\lambda$ and $\inner{\y}{\nu}=0$, it follows that 
\begin{align*}
\inner{\what{C}_u}{\y}&=
-v(\what{E}\what{G}-\what{F}^2)(\hat{\kappa}-2\lambda H+\lambda\hat{\rho}K)
=-v(\what{E}\what{G}-\what{F}^2)(\hat{\kappa}-(\lambda\kappa+\hat{\kappa})+\lambda\kappa)=0,\\
\inner{\what{C}_v}{\y}&=\lambda(\what{N}\what{F}-\what{M}\what{G})
+\hat{\rho}\hat{\kappa}(\what{M}\what{G}-\what{N}\what{F})=0
\end{align*} 
hold. 
Thus we have the assertion.
\end{proof}

Since $\y$ does not vanish near $p$, 
$\e_2=\y/|\y|$ is a unit normal vector field of $\what{C}$ by Lemma \ref{lem:normal}. 
Thus $\what{C}$ is a frontal near $p$. 
We remark that for the case of cuspidal edge, we have similar properties 
using the map $\y$ defined in Remark \ref{rmk:frame-ce}. 

\begin{lem}\label{lem:sing-C}
Let $f$ be a front in $\R^3$ and $p$ a singular point of the second kind of $f$. 
Then the set of singular points $S(\what{C})$ of $\what{C}$ in \eqref{eq:focal} 
coincides with the zero set of $\wtil{\bV}\hat{\rho}$. 
In particular, $p$ is also a singular point of $\what{C}$. 
\end{lem}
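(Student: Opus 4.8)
The plan is to avoid computing $\what{C}_u,\what{C}_v$ in coordinates and instead use the curvature line frame $\{\bV,\wtil{\bV}\}$ together with the Rodrigues-type relations of Lemma \ref{lem:rod1}. By Lemma \ref{lem:normal} the map $\y$ is normal to $\what{C}$, so $\what{C}$ is a frontal with unit normal $\e_2=\y/|\y|$, and its singular set is the zero set of the signed area density $\delta:=\det(\what{C}_u,\what{C}_v,\y)$. Since $\bV$ and $\wtil{\bV}$ are linearly independent at every point of $U$, the change of frame from $\{\partial_u,\partial_v\}$ to $\{\bV,\wtil{\bV}\}$ has nowhere-vanishing Jacobian; hence the zero set of $\delta$ coincides with the zero set of $\det(d\what{C}(\bV),d\what{C}(\wtil{\bV}),\y)$, and I would work with the latter.

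Differentiating $\what{C}=f+\hat{\rho}\nu$ and substituting the two identities of Lemma \ref{lem:rod1}, namely $d\nu(\bV)=-\kappa\,df(\bV)$ and $df(\wtil{\bV})=-\hat{\rho}\,d\nu(\wtil{\bV})$, I obtain
\[
d\what{C}(\bV)=(1-\hat{\rho}\kappa)\x+(\bV\hat{\rho})\nu,\qquad d\what{C}(\wtil{\bV})=(\wtil{\bV}\hat{\rho})\nu,
\]
where $\x=df(\bV)$ as in \eqref{eq:vec-xy}. The essential point—and the step I expect to be the crux—is the second identity: in $d\what{C}(\wtil{\bV})=df(\wtil{\bV})+(\wtil{\bV}\hat{\rho})\nu+\hat{\rho}\,d\nu(\wtil{\bV})$ the two tangential terms $df(\wtil{\bV})$ and $\hat{\rho}\,d\nu(\wtil{\bV})$ cancel exactly by Rodrigues' relation, so that $d\what{C}(\wtil{\bV})$ is a scalar multiple of $\nu$ with coefficient precisely $\wtil{\bV}\hat{\rho}$.

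Feeding these expressions into the determinant and using that $\{\x,\y,\nu\}$ is an orthogonal frame (Lemma \ref{lem:perp}), multilinearity of $\det$ gives
\[
\det(d\what{C}(\bV),d\what{C}(\wtil{\bV}),\y)=(1-\hat{\rho}\kappa)(\wtil{\bV}\hat{\rho})\det(\x,\nu,\y),
\]
since the summand involving two copies of $\nu$ drops out. Here $\det(\x,\nu,\y)\neq0$ because $\{\x,\y,\nu\}$ is a frame. Moreover $\hat{\rho}(p)=\lambda(p)/\hat{\kappa}(p)=0$, as $\lambda(p)=0$ and $\hat{\kappa}(p)\neq0$, so $1-\hat{\rho}\kappa=1$ at $p$ and hence $1-\hat{\rho}\kappa\neq0$ on a sufficiently small neighborhood of $p$. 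Consequently $\delta$ vanishes exactly where $\wtil{\bV}\hat{\rho}$ vanishes, which proves $S(\what{C})=(\wtil{\bV}\hat{\rho})^{-1}(0)$.

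Finally, to see that $p$ itself is a singular point I would evaluate $\wtil{\bV}\hat{\rho}$ at $p$. By \eqref{eq:vec2} we have $\wtil{\bV}=\wtil{V}_1\eta-v\what{V}_2\partial_v$, so along $v=0$, using $\eta=\partial_u+e(u)\partial_v$ with $e(0)=0$, it follows that $\wtil{\bV}=\wtil{V}_1(p)\,\partial_u$ at $p$. On an adapted coordinate system the $u$-axis is the singular curve, so $\lambda(u,0)\equiv0$ and therefore $\lambda_u(p)=0$; since $\hat{\rho}=\lambda/\hat{\kappa}$ and $\lambda(p)=0$, this gives $\hat{\rho}_u(p)=\lambda_u(p)/\hat{\kappa}(p)=0$. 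Hence $\wtil{\bV}\hat{\rho}(p)=\wtil{V}_1(p)\,\hat{\rho}_u(p)=0$, so $p\in S(\what{C})$.
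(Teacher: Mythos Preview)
Your proof is correct and takes a genuinely different route from the paper's. The paper works in a strongly adapted coordinate system and computes $\what{C}_u\times\what{C}_v\pmod\nu$ directly, expanding all cross products $f_u\times\nu$, $f_v\times\nu$, $\nu_u\times\nu$, $\nu_v\times\nu$ via Lemma~\ref{lem:weingarten2} and the vector triple product; after a page of simplification it arrives at the explicit formula
\[
\lambda^{\what{C}}=\dfrac{(\wtil{\bV}\hat{\rho})\,|\y|}{\wtil{V}_1\,\hat{\kappa}\,|h\times f_v|},
\]
from which the claim follows. You instead change frame to $\{\bV,\wtil{\bV}\}$ and let the Rodrigues relations of Lemma~\ref{lem:rod1} do the work: the cancellation $df(\wtil{\bV})+\hat{\rho}\,d\nu(\wtil{\bV})=0$ immediately gives $d\what{C}(\wtil{\bV})=(\wtil{\bV}\hat{\rho})\nu$, and the analogous identity for $\bV$ reduces the singular set to the vanishing of $(1-\hat{\rho}\kappa)(\wtil{\bV}\hat{\rho})$, with the first factor nonzero near $p$ because $\hat{\rho}(p)=0$. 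This is considerably shorter and coordinate-free; moreover, your identity $d\what{C}(\wtil{\bV})=(\wtil{\bV}\hat{\rho})\nu$ is precisely what the paper derives separately in the proof of Lemma~\ref{lem:corank} to show that $\wtil{\bV}$ is a null vector field of $\what{C}$, so your argument yields both lemmas at once. The trade-off is that the paper's computation produces the explicit density formula \eqref{eq:lambda2}, though in the sequel it is only used to say that $\lambda^{\what{C}}$ is proportional to $\wtil{\bV}\hat{\rho}$, which your argument establishes as well.
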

\begin{proof}
Let us take a strongly adapted coordinate system $(U;u,v)$ around $p$. 
We denote by $\lambda^{\what{C}}$ the signed area density function of $\what{C}$ 
given by $\lambda^{\what{C}}=\det(\what{C}_u,\what{C}_v,\e_2)=\inner{\what{C}_u\times\what{C}_v}{\e_2}$. 
We calculate it explicitly. 
Since $\nu$ can be taken as 
$\nu=(h\times f_v)/|h\times f_v|$ 
and $\e_2$ is perpendicular to $\nu$, we need 
$$
\what{C}_u\times \what{C}_v\equiv
\hat{\rho}_vf_u\times\nu+\hat{\rho}\hat{\rho}_v\nu_u\times\nu
+\hat{\rho}_u\nu\times f_v+\hat{\rho}\hat{\rho}_u\nu\times \nu_v \mod \nu.
$$
Here for two $C^\infty$ maps $\alpha,\beta\colon U\to\R^3$, 
$\alpha\equiv\beta\mod \nu$ implies that there exists a $C^\infty$ function $\delta\colon U\to\R$ 
such that $\alpha-\beta=\delta\nu$ holds. 
By the vector triple product and Lemma \ref{lem:weingarten2}, we have 
\begin{align*}
f_u\times \nu&=\dfrac{(v\what{F}-e(u)\what{G})h+(e(u)\what{F}-v\what{E})f_v}{|h\times f_v|},\quad 
f_v\times\nu=\dfrac{\what{G}h-\what{F}f_v}{|h\times f_v|},\\
\nu_u\times\nu&=\dfrac{(e(u)\what{N}-v\what{M})h+\what{L}f_v}{|h\times f_v|},\quad 
\nu_v\times\nu=\dfrac{-\what{N}h+\what{M}f_v}{|h\times f_v|}.
\end{align*}
Therefore, one can see that 
\begin{multline*}
\what{C}_u\times \what{C}_v\equiv
\dfrac{\hat{\rho}_u}{\hat{\kappa}|h\times f_v|}
((\lambda\what{N}-\hat{\kappa}\what{G})h-(\lambda\what{M}-\hat{\kappa}\what{F})f_v)\\
+\dfrac{\hat{\rho}_v}{\hat{\kappa}|h\times f_v|}
((-v(\lambda\what{M}-\hat{\kappa}\what{F})+e(u)(\lambda\what{N}-\hat{\kappa}\what{G}))h\\
\quad+(\lambda\what{L}-\hat{\kappa}(v\what{E}-e(u)\what{F}))f_v)
\mod \nu.
\end{multline*}

On the other hand, for $\wtil{\bV}=(\wtil{V}_1,\wtil{V}_2)$ as in \eqref{eq:p-vec2}, we note that  
$$
(\lambda\what{L}-\hat{\kappa}(v\what{E}-e(u)\what{F}))\wtil{V}_1
+(\lambda\what{M}-\hat{\kappa}\what{F})\wtil{V}_2=0
$$
holds by \eqref{eq:p-vec-mat}. 
Since $\wtil{V}_1\neq0$ near $p$, we have 
$$
(\lambda\what{L}-\hat{\kappa}(v\what{E}-e(u)\what{F}))=
-(\lambda\what{M}-\hat{\kappa}\what{F})\dfrac{\wtil{V}_2}{\wtil{V}_1}.
$$
Hence it follows that 
\[\begin{aligned}
\what{C}_u\times\what{C}_v&\equiv
\dfrac{\hat{\rho}_u\y}{\hat{\kappa}|h\times f_v|}
+\dfrac{\hat{\rho}_v}{\hat{\kappa}|h\times f_v|}
\left(\wtil{V}_2h-(\lambda\what{M}-\hat{\kappa}\what{F})\dfrac{\wtil{V}_2}{\wtil{V}_1}f_v\right)\\
&\equiv \dfrac{\hat{\rho}_u\y}{\hat{\kappa}|h\times f_v|}
+\dfrac{\wtil{V}_2\hat{\rho}_v}{\wtil{V}_1\hat{\kappa}|h\times f_v|}(\wtil{V}_1h-(\lambda\wtil{M}-\hat{\kappa}\wtil{F})f_v)\\
&\equiv \dfrac{(\wtil{\bV}\hat{\rho})\y}{\wtil{V}_1\hat{\kappa}|h\times f_v|} \mod \nu.
\end{aligned}\]
Summing up, we obtain 
\begin{equation}\label{eq:lambda2}
\lambda^{\what{C}}=\dfrac{(\wtil{\bV}\hat{\rho})|\y|}{\wtil{V}_1\hat{\kappa}|h\times f_v|}.
\end{equation}
Thus $S(\what{C})=(\wtil{\bV}\hat{\rho})^{-1}(0)$ holds. 
Further, since $\hat{\rho}_u(p)=\wtil{V}_2(p)=0$, 
$$\wtil{\bV}\hat{\rho}=\wtil{V}_1\hat{\rho}_u+\wtil{V}_2\hat{\rho}_v=0$$
holds at $p$. 
Hence $p$ is also a singular point of $\what{C}$
\end{proof}

\begin{lem}\label{lem:corank}
Let $p$ be a singular point of the second kind of a front $f$ in $\R^3$. 
Then the focal surface $\what{C}$ satisfies $\rank d\what{C}_p=1$. 
Moreover, $\wtil{\bV}$ can be taken as a null vector field $\eta^{\what{C}}$ of $\what{C}$ near $p$. 
\end{lem}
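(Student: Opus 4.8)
The plan is to show two things for the focal surface $\what{C}$ at a singular point of the second kind $p$: first that the rank of $d\what{C}$ at $p$ equals one, and second that the principal vector $\wtil{\bV}$ spans the kernel of $d\what{C}_p$, so that it can be chosen as a null vector field $\eta^{\what{C}}$ near $p$. Both facts should follow from the computation of $\what{C}_u$ and $\what{C}_v$ already prepared in the proof of Lemma \ref{lem:sing-C}, together with the Rodrigues-type relation in Lemma \ref{lem:rod1}.

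First I would compute $d\what{C}(\wtil{\bV})$ directly. Since $\what{C}=f+\hat{\rho}\nu$ with $\hat{\rho}=\lambda/\hat{\kappa}$, differentiating gives $d\what{C}(\wtil{\bV})=df(\wtil{\bV})+\hat{\rho}\,d\nu(\wtil{\bV})+(\wtil{\bV}\hat{\rho})\nu$. By Lemma \ref{lem:rod1} we have $df(\wtil{\bV})=-\hat{\rho}\,d\nu(\wtil{\bV})$, so the first two terms cancel and we are left with $d\what{C}(\wtil{\bV})=(\wtil{\bV}\hat{\rho})\nu$. By Lemma \ref{lem:sing-C}, $\wtil{\bV}\hat{\rho}$ vanishes exactly on $S(\what{C})$, and in particular $\wtil{\bV}\hat{\rho}(p)=0$. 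Hence $d\what{C}_p(\wtil{\bV})=0$, which shows that $\wtil{\bV}$ lies in the kernel of $d\what{C}_p$.

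To finish, I would show that the kernel is exactly one-dimensional, i.e. that $d\what{C}_p$ does not vanish identically. For this I would evaluate $d\what{C}(\bV)$ on the other principal direction. Again $d\what{C}(\bV)=df(\bV)+\hat{\rho}\,d\nu(\bV)+(\bV\hat{\rho})\nu$, and using $d\nu(\bV)=-\kappa\,df(\bV)$ from Lemma \ref{lem:rod1} this becomes $d\what{C}(\bV)=(1-\hat{\rho}\kappa)\,df(\bV)+(\bV\hat{\rho})\nu=(1-\hat{\rho}\kappa)\x+(\bV\hat{\rho})\nu$. Since $\hat{\rho}(p)=\lambda(p)/\hat{\kappa}(p)=0$ because $\lambda(p)=0$ and $\hat{\kappa}(p)\neq0$, the coefficient $1-\hat{\rho}\kappa$ equals $1$ at $p$, so $d\what{C}_p(\bV)=\x(p)+(\bV\hat{\rho}(p))\nu(p)$. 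The vector $\x(p)$ is nonzero and, being tangent to $f$, is not parallel to $\nu(p)$, so $d\what{C}_p(\bV)\neq0$. Because $\{\bV,\wtil{\bV}\}$ is a basis of the tangent plane, this proves $\rank d\what{C}_p=1$.

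The main obstacle is the bookkeeping in verifying the two Rodrigues-type cancellations at the singular point, where $\hat{\rho}$ vanishes but is not identically zero; one must be careful that $d\what{C}(\wtil{\bV})$ is computed as a genuine derivative (the $\hat{\rho}\,d\nu(\wtil{\bV})$ term is finite because $\hat{\rho}$ is $C^\infty$ and $d\nu(\wtil{\bV})$ is bounded), rather than passing through the unbounded quantities $\til{\kappa}$ and $\rho$. Once the expressions $d\what{C}(\wtil{\bV})=(\wtil{\bV}\hat{\rho})\nu$ and $d\what{C}(\bV)=(1-\hat{\rho}\kappa)\x+(\bV\hat{\rho})\nu$ are established, the rank statement and the identification of $\wtil{\bV}$ as a null vector field are immediate, since $\wtil{\bV}\hat{\rho}$ vanishes precisely along $S(\what{C})$ by Lemma \ref{lem:sing-C} and $d\what{C}(\bV)$ stays transverse to $\nu$ near $p$.
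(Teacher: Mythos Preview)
Your argument is correct. The second part, establishing $d\what{C}(\wtil{\bV})=(\wtil{\bV}\hat{\rho})\nu$ via Lemma~\ref{lem:rod1} and hence that $\wtil{\bV}$ is a null vector field of $\what{C}$, is exactly what the paper does.

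For the rank statement the paper takes a slightly more direct route: in a strongly adapted coordinate system it simply evaluates the coordinate derivatives at $p$, obtaining $\what{C}_u(p)=0$ (because $f_u(p)=0$, $\hat{\rho}(p)=0$, and $\hat{\rho}_u(p)=\lambda_u(p)/\hat{\kappa}(p)=0$) and $\what{C}_v(p)=f_v(p)+(\lambda_v/\hat{\kappa})(p)\,\nu(p)\neq0$. Your variant instead applies the other Rodrigues relation $d\nu(\bV)=-\kappa\,df(\bV)$ to compute $d\what{C}(\bV)=(1-\hat{\rho}\kappa)\x+(\bV\hat{\rho})\nu$, which at $p$ reduces to $\x(p)+(\bV\hat{\rho})(p)\nu(p)$; since $\x(p)=\what{L}(p)f_v(p)\neq0$ by \eqref{eq:vec-xy} and $\what{L}(p)\neq0$, this is nonzero. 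The paper's computation is shorter, while your approach has the virtue of treating both directions $\bV$ and $\wtil{\bV}$ symmetrically through the Rodrigues-type identities and avoids any explicit coordinate evaluation beyond $\hat{\rho}(p)=0$.
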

\begin{proof}
Take a strongly adapted coordinate system $(U;u,v)$ centered at $p$. 
By a direct calculation, we see that 
$$\what{C}_u=0,\quad 
\what{C}_v=f_v+\hat{\rho}_v\nu=f_v+\dfrac{\lambda_v}{\hat{\kappa}}\nu
\left(=f_v+\dfrac{1}{\mu_c}\nu\right)\neq0$$
hold at $p$. 
Thus we have the first assertion. 

We next show the second assertion. 
By Lemma \ref{lem:rod1}, we have 
$$d\what{C}(\wtil{\bV})=df(\wtil{\bV})+\hat{\rho}d\nu(\wtil{\bV})+(\wtil{\bV}\hat{\rho})\nu=(\wtil{\bV}\hat{\rho})\nu.$$
Since $S(\what{C})=(\wtil{\bV}\hat{\rho})^{-1}(0)$, $d\what{C}(\wtil{\bV})$ vanishes on $S(\what{C})$. 
This implies that $\wtil{\bV}$ is a null vector field of $\what{C}$. 
\end{proof}

\begin{lem}\label{lem:swallow}
Let $p$ be a singular point of the second kind of a front $f\colon\Sig\to\R^3$. 
Then we have the following:
\begin{enumerate}
\item When $p$ is a swallowtail of $f$, 
then $p$ is a non-degenerate singular point of $\what{C}$.
\item When $p$ is not a swallowtail of $f$, then 
$p$ is a non-degenerate singular point of $\what{C}$ 
if and only if $\bV(\wtil{\bV}\hat{\rho})\neq0$ at $p$, 
where $\bV$ is a principal vector associated to the bounded principal curvature $\kappa$. 
%This condition is equivalent to that $p$ is not a sub-parabolic point of $f$.
\end{enumerate}
\end{lem}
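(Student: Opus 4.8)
The plan is to read the non-degeneracy of $\what{C}$ at $p$ directly off the signed area density $\lambda^{\what{C}}$ computed in Lemma \ref{lem:sing-C}, and then to split the two candidate directional derivatives according to whether the null direction $\wtil{\bV}$ detects the swallowtail condition for $f$. First I would exploit the factorization \eqref{eq:lambda2}: writing $\mu=\wtil{\bV}\hat\rho$ and $\psi=|\y|/(\wtil{V}_1\hat\kappa|h\times f_v|)$, we have $\lambda^{\what{C}}=\mu\,\psi$ with $\psi$ a nowhere-vanishing $C^\infty$ function near $p$ (since $\y\neq0$, $\wtil{V}_1\neq0$, $\hat\kappa\neq0$ and $|h\times f_v|\neq0$ there). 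Because $\mu(p)=0$ by Lemma \ref{lem:sing-C}, the Leibniz rule gives $d\lambda^{\what{C}}(p)=\psi(p)\,d\mu(p)$, so $p$ is a non-degenerate singular point of $\what{C}$ if and only if $d\mu(p)\neq0$. As $\{\bV,\wtil{\bV}\}$ is a basis of $T_p\Sig$ (they are linearly independent at $p$), this is in turn equivalent to $\bV\mu(p)\neq0$ or $\wtil{\bV}\mu(p)\neq0$.

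The heart of the argument is to compute the null-direction second derivative $\wtil{\bV}\mu(p)=\wtil{\bV}\wtil{\bV}\hat\rho(p)$ and to identify it with the swallowtail quantity of $f$. Using $\hat\rho=\lambda/\hat\kappa$ together with the two facts $\lambda(p)=0$ and $\wtil{\bV}\lambda(p)=\wtil{V}_1(p)\,\eta\lambda(p)=0$ (the latter because $\eta(p)$ is tangent to the singular curve at a singular point of the second kind, so $\eta\lambda(p)=0$), every term carrying a factor of $\lambda$ or $\wtil{\bV}\lambda$ drops out at $p$, leaving $\wtil{\bV}\wtil{\bV}\hat\rho(p)=\wtil{\bV}\wtil{\bV}\lambda(p)/\hat\kappa(p)$. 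Then, expanding $\wtil{\bV}=\wtil{V}_1\eta-v\what{V}_2\partial_v$ from \eqref{eq:vec2} and repeatedly using $v=0$, $e(0)=0$ and $\eta\lambda(p)=0$ on the $u$-axis, the computation collapses to $\wtil{\bV}\wtil{\bV}\lambda(p)=\wtil{V}_1(p)^2\,\eta\eta\lambda(p)$. Hence
\[
\wtil{\bV}\mu(p)=\frac{\wtil{V}_1(p)^2}{\hat\kappa(p)}\,\eta\eta\lambda(p),
\]
and since $\wtil{V}_1(p)\neq0$ and $\hat\kappa(p)\neq0$, this is nonzero precisely when $\eta\eta\lambda(p)\neq0$.

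Finally I would assemble the two cases via Fact \ref{fact:crit}. At a non-degenerate singular point of the second kind one has $\eta\lambda(p)=0$ and $d\lambda(p)\neq0$, so $p$ is a swallowtail of $f$ exactly when $\eta\eta\lambda(p)\neq0$. In case (1) the swallowtail hypothesis forces $\wtil{\bV}\mu(p)\neq0$, whence $d\mu(p)\neq0$ and $p$ is a non-degenerate singular point of $\what{C}$. In case (2), $p$ not being a swallowtail gives $\eta\eta\lambda(p)=0$, so $\wtil{\bV}\mu(p)=0$ and the criterion $d\mu(p)\neq0$ reduces to the single remaining condition $\bV\mu(p)=\bV(\wtil{\bV}\hat\rho)(p)\neq0$.

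The hard part will be the bookkeeping in the second-derivative computation of $\wtil{\bV}\wtil{\bV}\hat\rho(p)$, where one must carefully track which terms survive at $p$ under the vanishing of $v$, $e$, $\lambda$, $\wtil{\bV}\lambda$ and $\eta\lambda$; once the clean identity above is in hand, both assertions follow immediately. A minor point to verify along the way is the smoothness and non-vanishing of $\psi$ near $p$, which is what legitimizes passing from $d\lambda^{\what{C}}(p)$ to $d\mu(p)$.
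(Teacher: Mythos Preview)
Your proposal is correct and follows essentially the same strategy as the paper: reduce non-degeneracy of $\what{C}$ at $p$ to $d(\wtil{\bV}\hat\rho)(p)\neq0$ via \eqref{eq:lambda2}, and then separate the swallowtail and non-swallowtail cases by computing one directional derivative.

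The only organizational difference is in which basis the gradient is tested. The paper works in the coordinate basis, computing $(\wtil{\bV}\hat\rho)_u(p)=-e'(0)\lambda_v(p)$ and $(\wtil{\bV}\hat\rho)_v(p)=-\hat\kappa(p)\hat\rho_{uv}(p)$ directly from \eqref{eq:p-vec2}, and then evaluates $\bV(\wtil{\bV}\hat\rho)(p)$ as a linear combination. You instead test against $\{\bV,\wtil{\bV}\}$ from the outset, and for the $\wtil{\bV}$-direction you use the quotient $\hat\rho=\lambda/\hat\kappa$ to reduce $\wtil{\bV}\wtil{\bV}\hat\rho(p)$ to $\wtil{V}_1(p)^2\eta\eta\lambda(p)/\hat\kappa(p)$, invoking the swallowtail criterion of Fact~\ref{fact:crit} in its intrinsic form $\eta\eta\lambda(p)\neq0$ rather than via $e'(0)\neq0$. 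The two computations are equivalent (indeed $\eta\eta\lambda(p)=e'(0)\lambda_v(p)$ and $\wtil{V}_1(p)=-\hat\kappa(p)$, so your formula recovers the paper's $(\wtil{\bV}\hat\rho)_u(p)$), and your reduction to $\lambda$-derivatives is a clean way to see why the swallowtail condition appears.
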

\begin{proof}
Taking a strongly adapted coordinate system $(U;u,v)$ centered at $p$ with $\lambda_v(p)>0$, 
we have 
\begin{align*}
(\wtil{\bV}\hat{\rho})_u&=
(\wtil{V}_1)_u\hat{\rho}_u+\wtil{V}_1\hat{\rho}_{uu}+(\wtil{V}_2)_u\hat{\rho}_v+\wtil{V}_2\hat{\rho}_{uv}
=(\wtil{V}_2)_u\hat{\rho}_v=-e'(0)\hat{\kappa}\dfrac{\lambda_v}{\hat{\kappa}}
=-e'(0)\lambda_v,\\%\neq0
(\wtil{\bV}\hat{\rho})_v&=
(\wtil{V}_1)_v\hat{\rho}_u+\wtil{V}_1\hat{\rho}_{uv}+(\wtil{V}_2)_v\hat{\rho}_v+\wtil{V}_2\hat{\rho}_{vv}
=\wtil{V}_1\hat{\rho}_{uv}=-\hat{\kappa}\hat{\rho}_{uv}
\end{align*}
at $p$ by \eqref{eq:p-vec2}. 
Thus by the definition, $p$ is a non-degenerate singular point of $\what{C}$ if and only if 
$(e'(0),\hat{\rho}_{uv})\neq(0,0)$ at $p$ because $\lambda_v(p)\neq0$ and $\hat{\kappa}(p)\neq0$. 
In particular, the condition $e'(0)\neq0$ holds when $p$ is a swallowtail of the initial front $f$ by Fact \ref{fact:crit}. 
Thus the first assertion holds.

On the other hand, $\bV(\wtil{\bV}\hat{\rho})$ is calculated as 
$$\bV(\wtil{\bV}\hat{\rho})=V_1(\wtil{\bV}\hat{\rho})_u+V_2(\wtil{\bV}\hat{\rho})_v
=e'(0)\lambda_v\what{M}-\hat{\kappa}\hat{\rho}_{uv}\what{L}$$
at $p$ by \eqref{eq:p-vec1}. 
Thus $\bV(\wtil{\bV}\hat{\rho})\neq0$ if and only if $\hat{\rho}_{uv}\neq0$ at $p$ 
when $p$ is not a swallowtail, 
that is, $e'(0)=0$. 
Hence we obtain the second assertion.
\end{proof}
When the initial front $f$ has a cuspidal butterfly at $p$, 
the focal surface $\what{C}$ may have a degenerate singularity at $p$ by this lemma. 

\begin{prop}\label{prop:frontness}
Let $f\colon\Sig\to\R^3$ be a front with a singular point of the second kind $p$. 
Then the focal surface $\what{C}$ as in \eqref{eq:focal} is a front at $p$. 
%Moreover, if $f$ at $p$ is a swallowtail, then $\what{C}$ at $p$ is a cuspidal edge.
\end{prop}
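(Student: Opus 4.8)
The plan is to verify the defining immersion condition for a front directly. By Lemma \ref{lem:normal} the map $\e_2=\y/|\y|$ is a unit normal field of $\what{C}$, so $\what{C}$ is a frontal; moreover $\y(p)\neq0$ (see below), so $\e_2$ is a well-defined smooth map into $\bS^2$ near $p$. By Lemma \ref{lem:corank} we have $\rank d\what{C}_p=1$ with null vector field $\wtil{\bV}$, and at $p$ one checks $\what{C}_v(p)\neq0$, so $\what{C}$ is already immersive transverse to $\wtil{\bV}$. Hence to prove that $(\what{C},\e_2)\colon U\to\R^3\times\bS^2$ is an immersion at $p$ it suffices to establish the single scalar condition
\[
d\e_2(\wtil{\bV})(p)\neq0,
\]
since $d\what{C}(\wtil{\bV})(p)=0$ and the first factor is already immersive in the complementary direction.

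The key step is to extract the $\nu$-component of $d\e_2(\wtil{\bV})$, which is exactly where the earlier structure pays off. Because $\{\e_1,\e_2,\nu\}$ is an orthonormal frame along $f$, we may differentiate the identity $\inner{\e_2}{\nu}=0$ in the direction $\wtil{\bV}$ to obtain
\[
\inner{d\e_2(\wtil{\bV})}{\nu}=-\inner{\e_2}{d\nu(\wtil{\bV})}.
\]
Substituting the formula $d\nu(\wtil{\bV})=-(\hat{\kappa}/\hat{\lambda})|\y|\,\e_2$ from Remark \ref{rem:dn} and using $\inner{\e_2}{\e_2}=1$ yields
\[
\inner{d\e_2(\wtil{\bV})}{\nu}=\frac{\hat{\kappa}}{\hat{\lambda}}\,|\y|.
\]

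It then remains to evaluate this expression at $p$. I would record that $\hat{\kappa}(p)\neq0$ (as noted after Proposition \ref{prop:princ}) and $\hat{\lambda}(p)\neq0$ (by non-degeneracy, as in Remark \ref{rem:dn}), and that $\y(p)\neq0$: since $\lambda(p)=0$, equation \eqref{eq:vec-xy} gives $\y(p)=\hat{\kappa}(p)\bigl(\what{F}(p)f_v(p)-\what{G}(p)h(p)\bigr)$, whose $h$-coefficient $-\hat{\kappa}(p)\what{G}(p)$ is nonzero because $\what{G}(p)=|f_v(p)|^2=1$ and $\{h,f_v\}$ is part of a frame. Therefore $\inner{d\e_2(\wtil{\bV})}{\nu}(p)\neq0$, so in particular $d\e_2(\wtil{\bV})(p)\neq0$, and $(\what{C},\e_2)$ is an immersion at $p$; thus $\what{C}$ is a front. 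The only delicate point is the reduction to this single condition: one must use that $\wtil{\bV}$, rather than $\partial_u$, is the null direction of $\what{C}$ (supplied by Lemma \ref{lem:corank}) and keep the orthonormal frame $\{\e_1,\e_2,\nu\}$ straight; beyond this bookkeeping I expect no substantive obstacle, as the whole argument rests on a one-line orthogonality computation fed by Remark \ref{rem:dn}.
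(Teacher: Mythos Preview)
Your proof is correct and takes a genuinely different, more conceptual route than the paper's. Both arguments reduce to showing $d\e_2(\wtil{\bV})(p)\neq0$ (equivalently $(\e_2)_u(p)\neq0$ in strongly adapted coordinates). The paper computes $(\e_2)_u$ by brute force: it expands $\y_u$ at $p$ via the structural formulas for $h_u$ in Lemma~\ref{lem:derivative} and Lemma~\ref{lem:ab}, and obtains the explicit numerator $\y_u|\y|^2-\y\inner{\y_u}{\y}=\hat{\kappa}^3\what{E}^2(f_v-\mu_c\nu)$ at $p$. You instead differentiate the orthogonality $\inner{\e_2}{\nu}=0$ along $\wtil{\bV}$ and feed in the Rodrigues-type identity $d\nu(\wtil{\bV})=-(\hat{\kappa}/\hat{\lambda})|\y|\,\e_2$ from Remark~\ref{rem:dn}, which immediately isolates the $\nu$-component of $d\e_2(\wtil{\bV})$ as $(\hat{\kappa}/\hat{\lambda})|\y|$, nonzero at $p$. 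Your approach is shorter and avoids second-order differentiation of $\y$ entirely; the paper's approach has the minor byproduct of an explicit expression for $(\e_2)_u(p)$ in the frame $\{h,f_v,\nu\}$, though this formula is not used again. The two agree numerically: at $p$ one has $\hat{\kappa}/\hat{\lambda}=\mu_c$ and $|\y|=|\hat{\kappa}|\sqrt{\what{E}}$, matching the $\nu$-component extracted from the paper's formula after accounting for the factor $\wtil{V}_1(p)=-\hat{\kappa}(p)$.
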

\begin{proof}
We take a strongly adapted coordinate system $(U;u,v)$ centered at $p$. 
Since $\what{C}_u=0$ at $p$ and $p$ is a corank one singular point of $\what{C}$, 
$\what{C}$ is a front at $p$ if and only if $(\e_2)_u(p)\neq0$, 
where $\e_2=\y/|\y|$ and $\y$ is a map given by \eqref{eq:vec-xy}. 
We note that this condition is equivalent to $d\e_2(\wtil{\bV})\neq0$ at $p$ 
because $\wtil{\bV}$ is parallel to $\partial_u$ at $p$ (see \eqref{eq:p-vec2}). 
%Assume that $\hat{\kappa}>0$ near $p$. 
By a direct computation, it holds that 
$$
(\e_2)_u=\dfrac{\y_u|\y|^2-\y\inner{\y_u}{\y}}{|\y|^3}.
%=\dfrac{\hat{\kappa}}{|\y|^2}\left(-\dfrac{\what{E}_u}{2}h\right)
$$
We consider the expression of $\y_u$ at $p$. 
Since $\lambda(p)=\what{F}(p)=\what{G}_u(p)=0$ and $\what{G}(p)=1$, 
we get 
$$\y_u=-\hat{\kappa}_uh-\hat{\kappa}h_u+\hat{\kappa}\what{F}_uf_v$$
at $p$, 
where $h_u$ can be written as in \eqref{eq:derivatives}. 
By Lemma \ref{lem:ab}, $A=\inner{h_u}{f_v}=\what{F}_u-\what{E}$ holds at $p$. 
Thus $\y_u$ satisfies 
$$\y_u=-\left(\hat{\kappa}_u+\dfrac{\hat{\kappa}\what{E}_u}{2\what{E}}\right)h
+\hat{\kappa}\what{E}(f_v-\mu_c\nu)$$
at $p$ since $\mu_c=(\what{L}/\what{E})(p)$ holds by Lemma \ref{lem:relation}. 
Since $\y=-\hat{\kappa}h$ at $p$, we have 
$$\y_u|\y|^2-\y\inner{\y}{\y_u}=\hat{\kappa}^3\what{E}(\what{E}f_v-\what{L}\nu)
=\hat{\kappa}^3\what{E}^2\left(f_v-\mu_c\nu\right)\neq0$$
at $p$. 
Therefore $(\e_2)_u\neq0$ at $p$, and hence $\what{C}$ is a front at $p$.
\end{proof}
If a front $f$ is a cuspidal edge at $p$, 
then $\what{C}$ is regular at $p$, and hence we can also consider $\what{C}$ as a front near $p$. 

Summarizing above results and using the criteria for singularities given by Fact \ref{fact:crit}, 
we have the following characterizations.
\begin{thm}\label{thm:singularity-C}
Let $f\colon\Sig\to\R^3$ be a front and $p\in\Sig$ a singular point of the second kind. 
Then we have the following.
\begin{enumerate}
\item The focal surface $\what{C}$ given by \eqref{eq:focal} is a cuspidal edge at $p$ 
if and only if $\bV(\wtil{\bV}\hat{\rho})\neq0$ and $\wtil{\bV}(\wtil{\bV}\hat{\rho})\neq0$ at $p$. 
In particular, $p$ is a swallowtail of $f$.
\item The focal surface $\what{C}$ is a swallowtail at $p$ 
if and only if $\bV(\wtil{\bV}\hat{\rho})\neq0$, $\wtil{\bV}(\wtil{\bV}\hat{\rho})=0$ and 
$\wtil{\bV}\wtil{\bV}(\wtil{\bV}\hat{\rho})\neq0$ at $p$. 
In particular, $p$ is a cuspidal butterfly of $f$.
\end{enumerate}
\end{thm}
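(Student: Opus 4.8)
The plan is to apply the criteria of Fact \ref{fact:crit} directly to the focal surface $\what{C}$. By Proposition \ref{prop:frontness} the map $\what{C}$ is a front at $p$, by Lemma \ref{lem:corank} it has a corank one singular point there with null vector field $\eta^{\what{C}}=\wtil{\bV}$, and by \eqref{eq:lambda2} its signed area density factors as $\lambda^{\what{C}}=\Theta\cdot(\wtil{\bV}\hat{\rho})$, where $\Theta=|\y|/(\wtil{V}_1\hat{\kappa}|h\times f_v|)$ is a nowhere-vanishing $C^\infty$ factor near $p$ (indeed $\y(p)=-\hat{\kappa}h(p)\neq0$, $\wtil{V}_1(p)=-\hat{\kappa}(p)\neq0$, and $h\times f_v$ never vanishes). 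Thus every hypothesis of Fact \ref{fact:crit} is in place, and the criteria, stated in terms of $\lambda^{\what{C}}$ and $\eta^{\what{C}}$, become conditions on the single function $g:=\wtil{\bV}\hat{\rho}$, whose zero set is $S(\what{C})$ by Lemma \ref{lem:sing-C}.

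The key reduction is that, since $\Theta$ is a nonvanishing prefactor and $g(p)=0$, the iterated null derivatives of $\lambda^{\what{C}}$ at $p$ collapse onto those of $g$. Concretely, I would expand $\wtil{\bV}\lambda^{\what{C}}=(\wtil{\bV}\Theta)\,g+\Theta\,\wtil{\bV}g$ and evaluate at $p$ to get $\wtil{\bV}\lambda^{\what{C}}(p)=\Theta(p)\,\wtil{\bV}(\wtil{\bV}\hat{\rho})(p)$; on the locus where this vanishes, one more differentiation gives $\wtil{\bV}\wtil{\bV}\lambda^{\what{C}}(p)=\Theta(p)\,\wtil{\bV}\wtil{\bV}(\wtil{\bV}\hat{\rho})(p)$. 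Hence Fact \ref{fact:crit}(1) for $\what{C}$ becomes $\wtil{\bV}(\wtil{\bV}\hat{\rho})(p)\neq0$, and the first and last conditions of Fact \ref{fact:crit}(2) become $\wtil{\bV}(\wtil{\bV}\hat{\rho})(p)=0$ together with $\wtil{\bV}\wtil{\bV}(\wtil{\bV}\hat{\rho})(p)\neq0$.

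The remaining ingredient is the non-degeneracy requirement $d\lambda^{\what{C}}(p)\neq0$ of Fact \ref{fact:crit}(2). Since $\{\bV,\wtil{\bV}\}$ is a frame, I would read off the components of $d\lambda^{\what{C}}(p)=\Theta(p)\,dg(p)$ in the dual coframe; they are $\Theta(p)\,\bV(\wtil{\bV}\hat{\rho})(p)$ and $\Theta(p)\,\wtil{\bV}(\wtil{\bV}\hat{\rho})(p)$. This is exactly where $\bV(\wtil{\bV}\hat{\rho})$ enters, and its value is already computed in Lemma \ref{lem:swallow}, namely $\bV(\wtil{\bV}\hat{\rho})(p)=e'(0)\lambda_v\what{M}-\hat{\kappa}\hat{\rho}_{uv}\what{L}$ and $\wtil{\bV}(\wtil{\bV}\hat{\rho})(p)=\hat{\kappa}e'(0)\lambda_v$. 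For the swallowtail case of $\what{C}$ the first null derivative vanishes, so the non-degeneracy $d\lambda^{\what{C}}(p)\neq0$ is carried entirely by $\bV(\wtil{\bV}\hat{\rho})(p)\neq0$; for the cuspidal-edge case the condition $\wtil{\bV}(\wtil{\bV}\hat{\rho})(p)\neq0$ yields a non-degenerate first-kind singular point, and it is paired with $\bV(\wtil{\bV}\hat{\rho})(p)\neq0$ exactly as in the statement.

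Finally, for the \emph{in particular} clauses I would translate these null derivatives into the order of vanishing of the null coefficient $e(u)$: from $\wtil{\bV}(\wtil{\bV}\hat{\rho})(p)=\hat{\kappa}e'(0)\lambda_v$ one sees it is nonzero iff $e'(0)\neq0$, which by Fact \ref{fact:crit}(2) is a swallowtail of $f$, while the degenerate case $e'(0)=0$ with $\wtil{\bV}\wtil{\bV}(\wtil{\bV}\hat{\rho})(p)\neq0$ should force $e''(0)\neq0$, i.e.\ a cuspidal butterfly of $f$ by Fact \ref{fact:crit}(3). The main obstacle I anticipate is precisely this last identification: cleanly extracting $e''(0)$ from the third-order null derivative $\wtil{\bV}\wtil{\bV}(\wtil{\bV}\hat{\rho})(p)$. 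Because $\wtil{\bV}=\wtil{V}_1\partial_u+\wtil{V}_2\partial_v$ has nonconstant coefficients, one must show that all lower-order contributions drop out at $p$, using $\wtil{V}_1(p)=-\hat{\kappa}$, $\wtil{V}_2(p)=0$, $\hat{\rho}_u(p)=0$ and the first-order data of Lemma \ref{lem:swallow}; that bookkeeping of the iterated derivatives is where the real work lies, whereas the front structure and the passage through Fact \ref{fact:crit} are essentially formal.
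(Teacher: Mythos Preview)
Your proposal is correct and follows essentially the same approach as the paper: both reduce Fact \ref{fact:crit} for $\what{C}$ to conditions on $g=\wtil{\bV}\hat{\rho}$ via the factorization $\lambda^{\what{C}}=\Theta\cdot g$ from \eqref{eq:lambda2}, use $\wtil{\bV}$ as the null vector field (Lemma \ref{lem:corank}), invoke Proposition \ref{prop:frontness} for frontness, and appeal to the explicit values from Lemma \ref{lem:swallow} to identify $e'(0)$ and $e''(0)$. The paper's proof simply records the outcome of the third-order computation you flagged as the remaining bookkeeping, namely $\wtil{\bV}\wtil{\bV}(\wtil{\bV}\hat{\rho})(p)=-3e'(0)\hat{\kappa}\wtil{V}_1((\wtil{V}_1)_u\hat{\rho}_v+\wtil{V}_1\hat{\rho}_{uv})-e''(0)\hat{\kappa}\wtil{V}_1^2\hat{\rho}_v$ at $p$, from which the cuspidal-butterfly identification is immediate once $e'(0)=0$.
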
 
\begin{proof}
Let us take a strongly adapted coordinate system $(U;u,v)$ centered at $p$. 
By Lemma \ref{lem:corank}, $\wtil{\bV}$ as in \eqref{eq:p-vec2} can be taken as a null vector field of $\what{C}$. 
Moreover, the signed area density function $\lambda^{\what{C}}$ is proportional to 
$\wtil{\bV}\hat{\rho}$ by \eqref{eq:lambda2}. 
Thus $\wtil{\bV}\lambda^{\what{C}}\neq0$ 
(resp. $\wtil{\bV}\lambda^{\what{C}}=0$ and $\wtil{\bV}\wtil{\bV}\lambda^{\what{C}}\neq0$) 
at $p$ is equivalent to $\wtil{\bV}(\wtil{\bV}\hat{\rho})\neq0$ 
(resp. $\wtil{\bV}(\wtil{\bV}\hat{\rho})=0$ and 
$\wtil{\bV}\wtil{\bV}(\wtil{\bV}\hat{\rho})\neq0$) at $p$. 
By direct calculations, we see that $\wtil{\bV}(\wtil{\bV}\hat{\rho})(p)=e'(0)\hat{\kappa}(p)\lambda_v(p)=e'(0)\mu_c$ and 
\begin{equation*}
\wtil{\bV}\wtil{\bV}(\wtil{\bV}\hat{\rho})(p)=
-3e'(0)\hat{\kappa}(p)\wtil{V}_1(p)((\wtil{V}_1)_u(p)\hat{\rho}_v(p)+\wtil{V}_1(p)\hat{\rho}_{uv}(p))
-e''(0)\hat{\kappa}(p)\wtil{V}_1(p)^2\hat{\rho}_v(p)
\end{equation*}
hold. 
Thus we have the assertions by Fact \ref{fact:crit}, Lemma \ref{lem:swallow} and Proposition \ref{prop:frontness}.
\end{proof}

\begin{ex}\label{ex:sw-ce}
Let $f\colon\R^2\to\R^3$ be a map given by 
$$f(u,v)=\left(\dfrac{u^2}{2}-v,-\dfrac{u^3}{3}+u v,-\dfrac{u^4}{8}+\dfrac{u^2 v}{2}\right).$$
Then the set of singular points of $f$ is the $u$-axis, 
and $f$ has a swallowtail at the origin (see Figure \ref{fig:example}, left). 
Moreover, the null vector field $\eta$ of $f$ can be written as 
$\eta=\partial_u-u\partial_v$. 
The image of focal surface $\what{C}$ is shown in the center of Figure \ref{fig:example}. 
One can verify that $\what{C}$ has a cuspidal edge at the origin. 
%The null vector field $\eta$ for $f$ can be taken as 
%$\eta=\partial_u-u\partial_v$. 
\begin{figure}[htbp]
  \begin{center}
    \begin{tabular}{c}

      % 1
      \begin{minipage}{0.3\hsize}
        \begin{center}
          \includegraphics[clip, width=3cm]{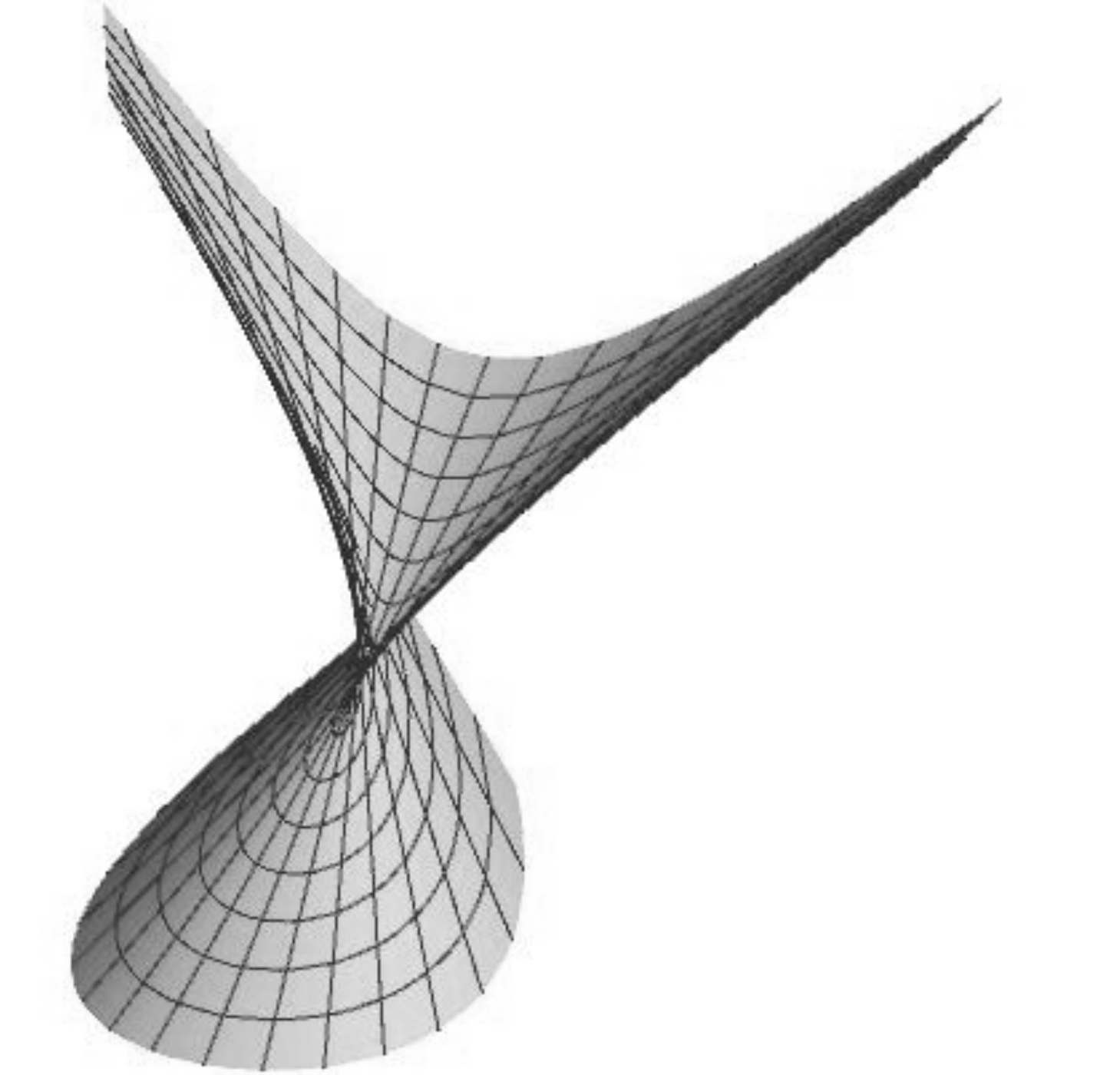}
        \end{center}
      \end{minipage}

\begin{minipage}{0.3\hsize}
        \begin{center}
          \includegraphics[clip, width=3cm]{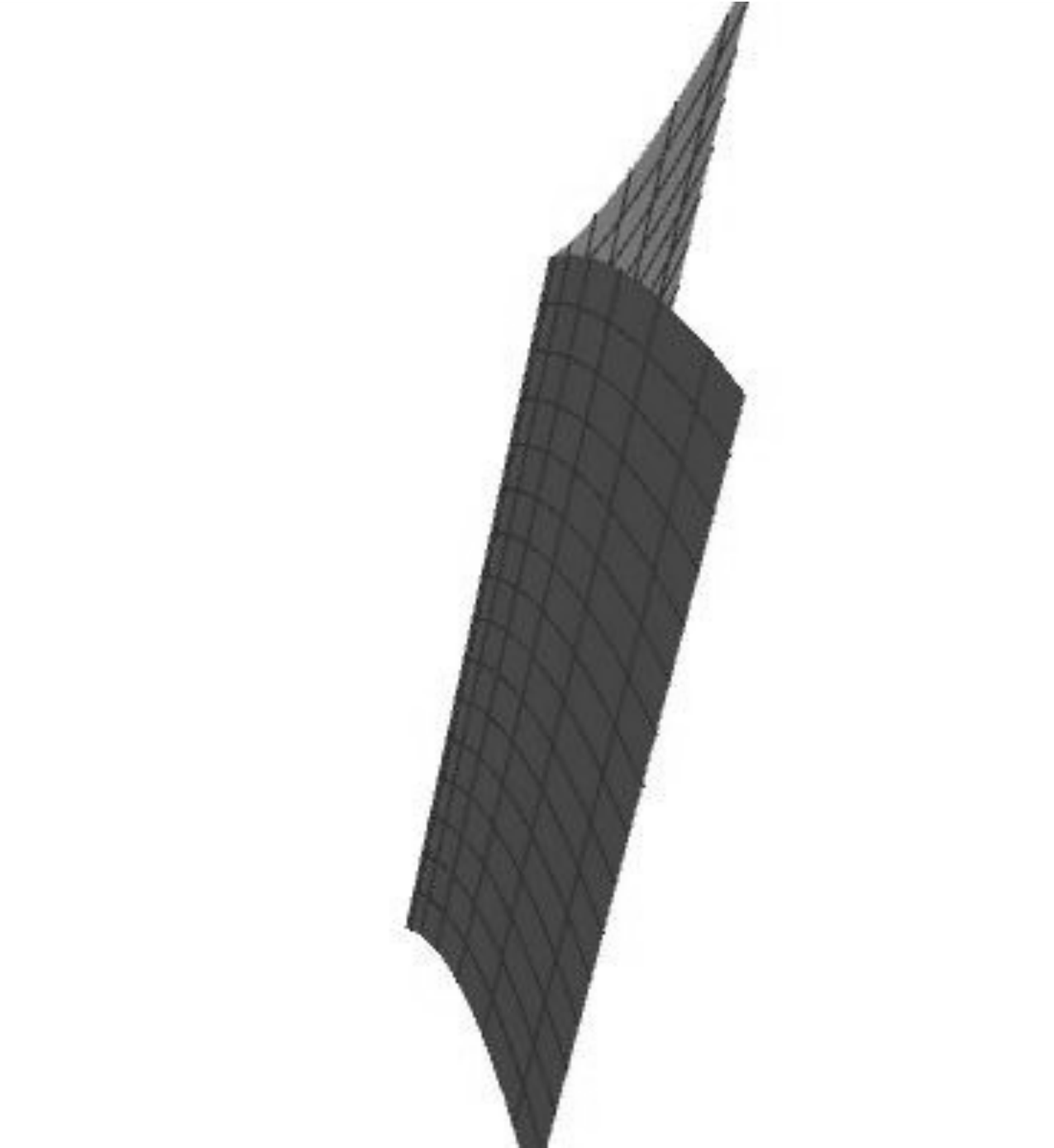}
        \end{center}
      \end{minipage}
      
      \begin{minipage}{0.3\hsize}
        \begin{center}
          \includegraphics[clip, width=3cm]{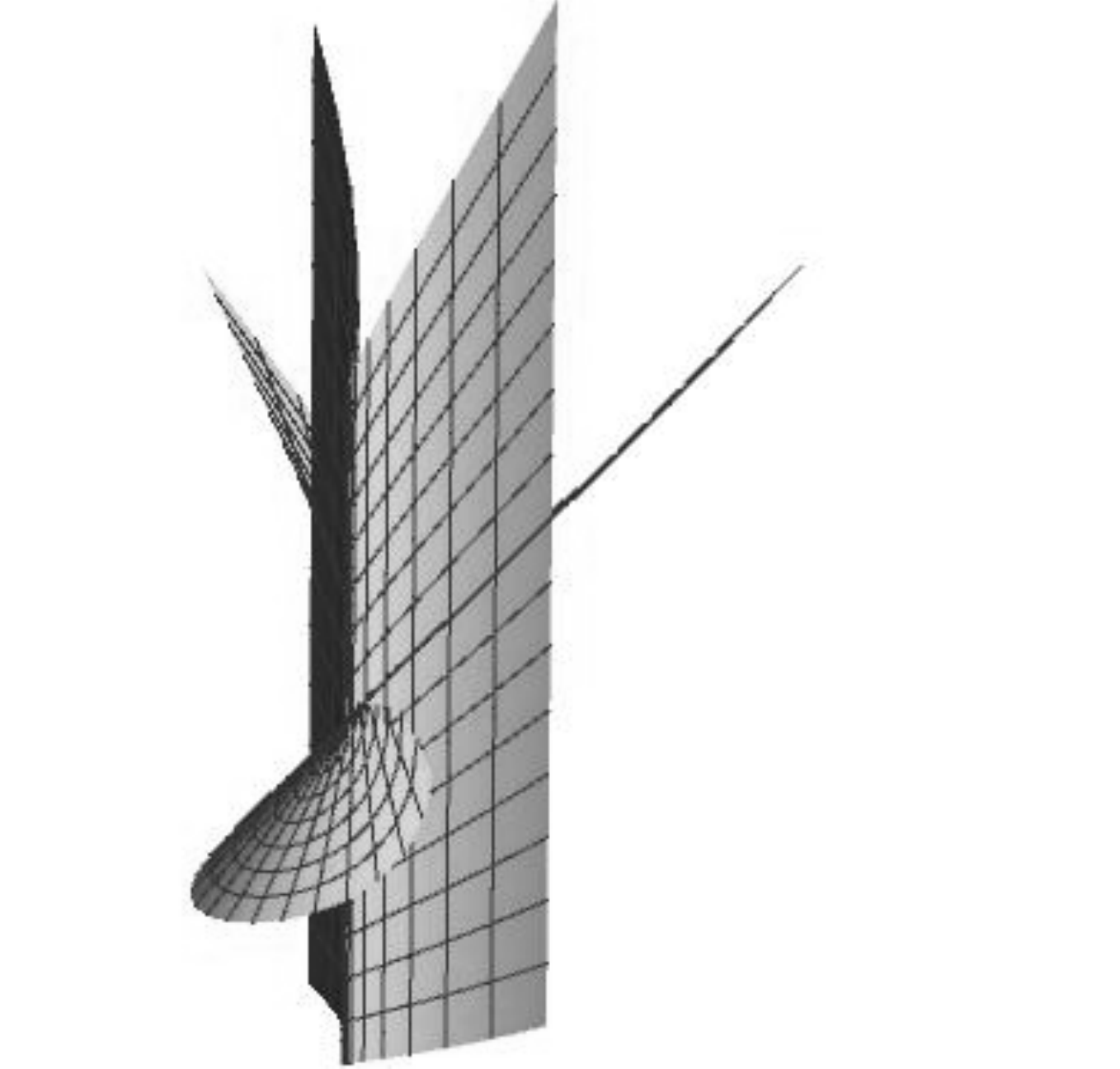}
        \end{center}
      \end{minipage}

    \end{tabular}
    \caption{The images of $f$ (left), $\what{C}$ (center) and both (right)
    of Example \ref{ex:sw-ce}.}
    \label{fig:example}
  \end{center}
\end{figure}
\end{ex}

\begin{ex}\label{ex:cbf-sw}
Let $f\colon\R^2\to\R^3$ be a $C^\infty$ map defined by 
\begin{align*}
f(u,v)=&\left(\frac{1}{6} \left(u^3-6 v\right),-\frac{u^4}{8}-\frac{u^3}{6}+u v+v,\right.\\
&\quad\quad\quad\left.\frac{1}{360} \left(-5 u^6-18 u^5+60 u^3 v+180 u^2 v-180 v^2\right)\right).
\end{align*}
The origin is a cuspidal butterfly of $f$ and $S(f)=\{v=0\}$. 
By a direct calculation, we have $(\wtil{\bV}\hat{\rho})_u=0$ and $(\wtil{\bV}\hat{\rho})_v=2(\neq0)$ at the origin. 
This implies that $\bV(\wtil{\bV}\hat{\rho})\neq0$ holds at the origin, 
and hence the origin is a non-degenerate singular point of the focal surface $\what{C}$ of $f$. 
Moreover, one can see that $\wtil{\bV}(\wtil{\bV}\hat{\rho})=0$ and $\wtil{\bV}\wtil{\bV}(\wtil{\bV}\hat{\rho})=-4\neq0$ hold at the origin. 
Thus $\what{C}$ has a swallowtail at the origin (see Figure \ref{fig:cbf_sw}). 
\begin{figure}[htbp]
  \begin{center}
    \begin{tabular}{c}

      % 1
      \begin{minipage}{0.3\hsize}
        \begin{center}
          \includegraphics[clip, width=3cm]{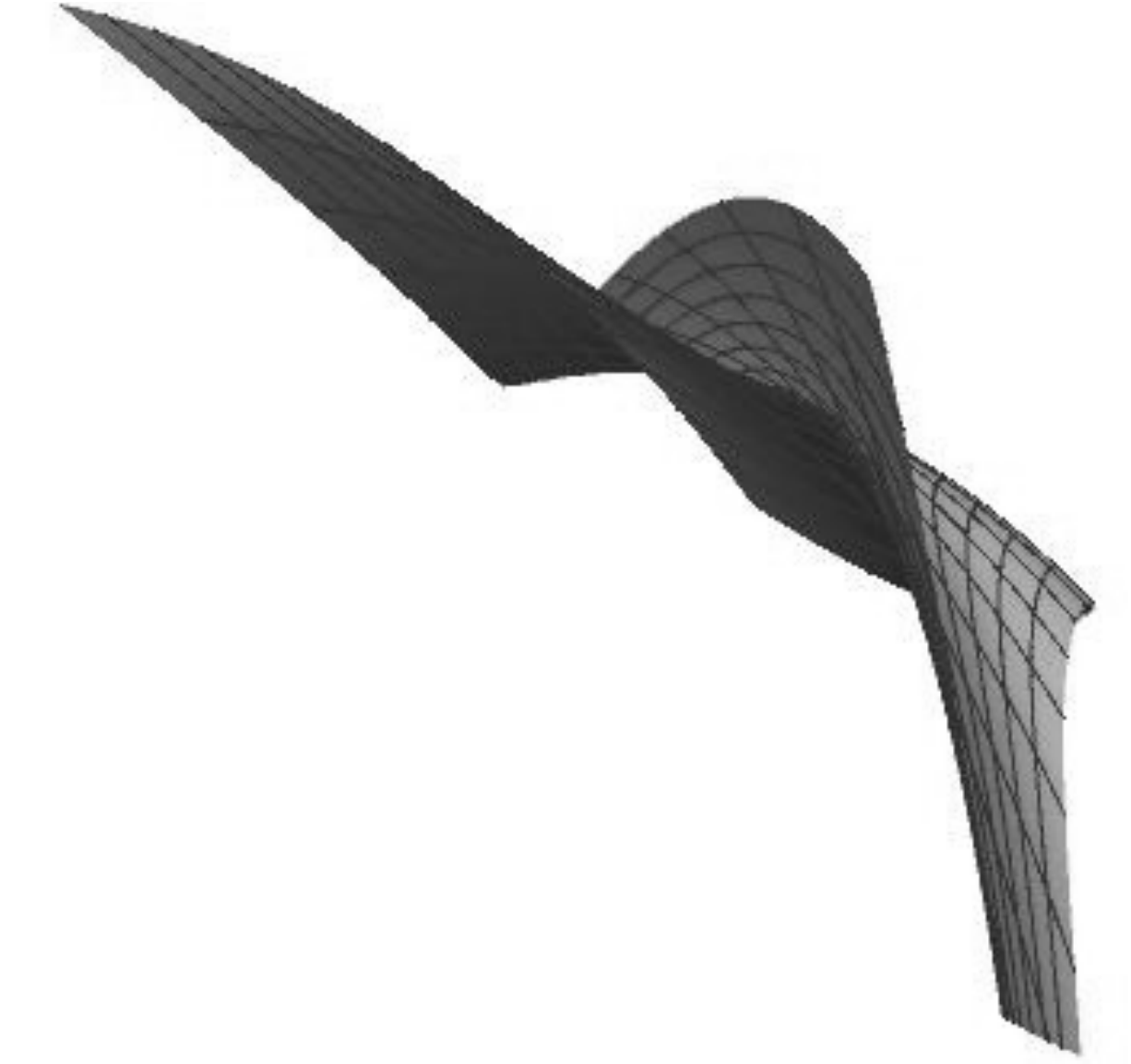}
        \end{center}
      \end{minipage}

      \begin{minipage}{0.3\hsize}
        \begin{center}
          \includegraphics[clip, width=3cm]{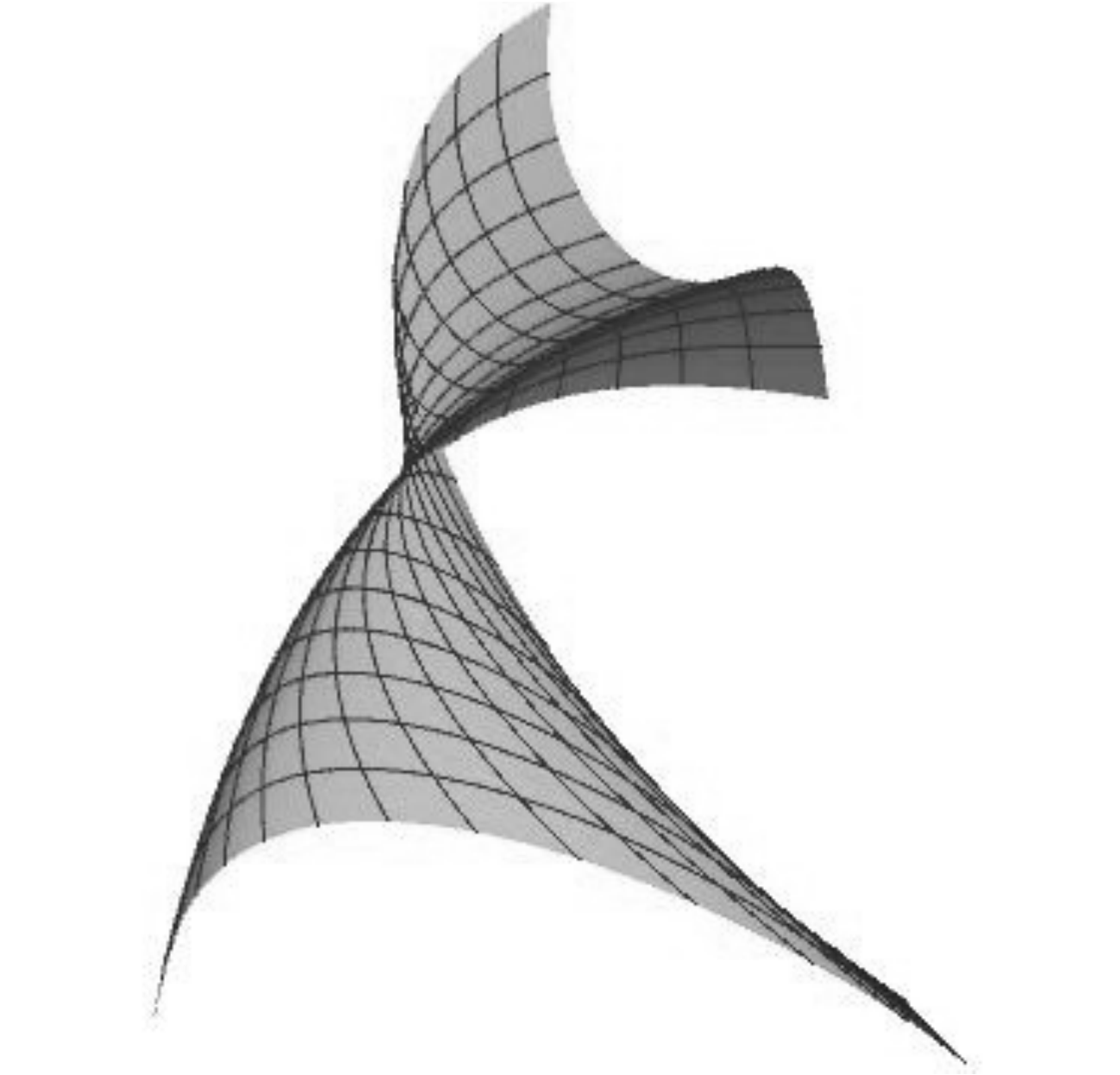}
        \end{center}
      \end{minipage}

\begin{minipage}{0.3\hsize}
        \begin{center}
          \includegraphics[clip, width=3cm]{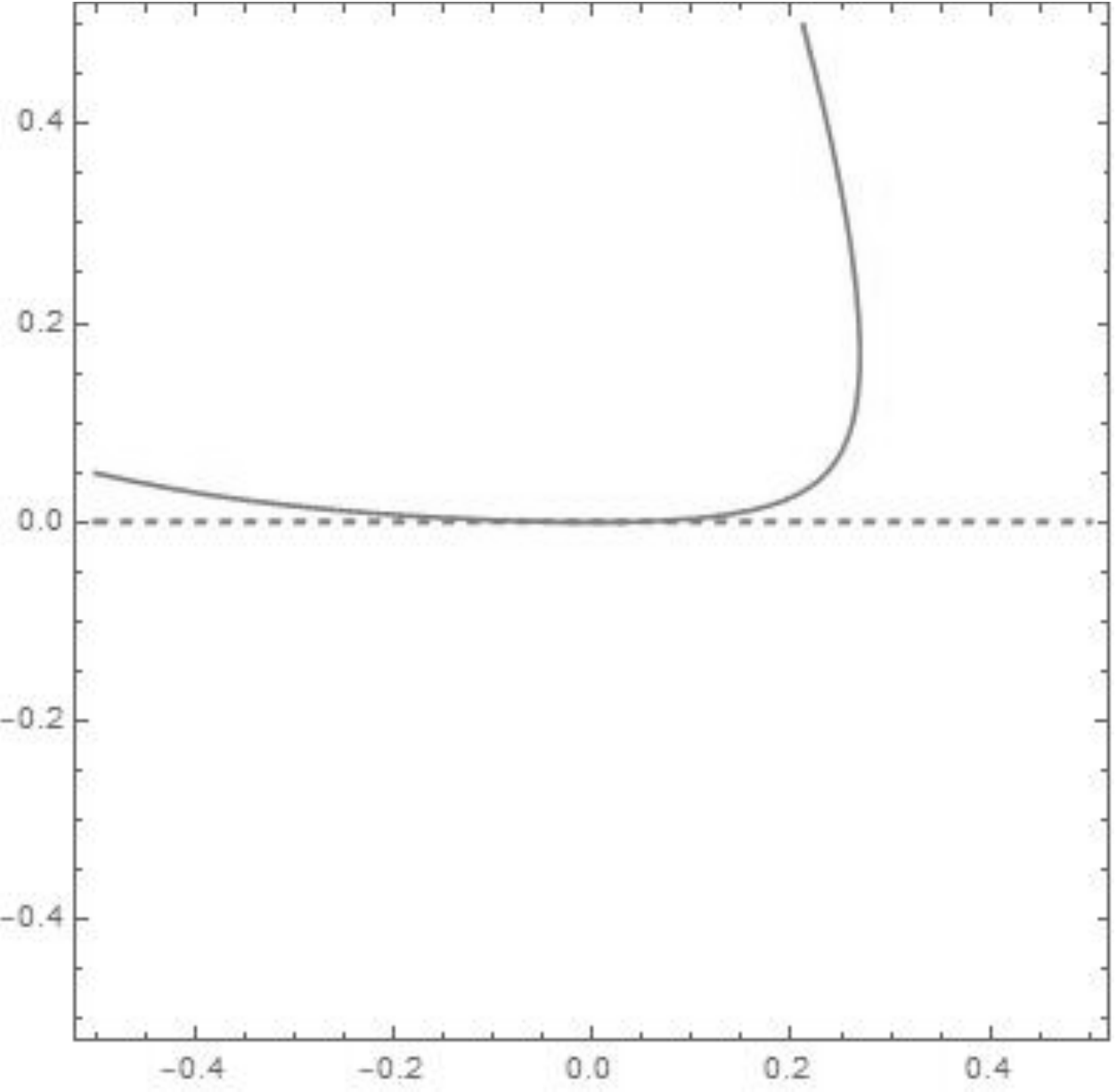}
        \end{center}
      \end{minipage}

    \end{tabular}
    \caption{Images of $f$ (left) and its focal surface $\what{C}$ (center) of Example \ref{ex:cbf-sw}.  
    The figure in the right-hand side shows the singular sets of both $f$ (dashed) and $\what{C}$ around the origin.}
    \label{fig:cbf_sw}
  \end{center}
\end{figure}

\end{ex}

As mentioned above, the focal surface $\what{C}$ given by \eqref{eq:focal} of a front $f$ 
might have a degenerate singularity at $p$ when $p$ is neither a cuspidal edge nor a swallowtail. 
By Theorem \ref{thm:singularity-C}, 
when $p$ is a cuspidal butterfly of $f$, 
then $\what{C}$ may have a cuspidal lips/beaks at $p$, 
which are degenerate singular points of fronts. 
%Here a {\it cuspidal lips} (resp. {\it cuspidal beaks}) is a germ 
%$\mathcal{A}$-equivalent to $(u,v)\mapsto(u,2v^3+u^2v,3v^4+u^2v^2)$ 
%(resp. $(u,v)\mapsto(u,-2v^3+u^2v,3v^4-u^2v^2)$) at the origin. 
%Criteria for these singular points are given in \cite[Theorem A.1]{ist-horoflat}.
Applying Fact \ref{fact:crit} for degenerate singularities, we have the following. 
\begin{prop}\label{prop:sing-C-deg}
Let $f\colon\Sig\to\R^3$ be a front and $p$ a cuspidal butterfly of $f$. 
Suppose that $\bV(\wtil{\bV}\hat{\rho})=0$ holds at $p$. 
Then the focal surface $\what{C}$ as in \eqref{eq:focal} 
is a cuspidal lips $($resp. cuspidal beaks$)$ at $p$ 
if and only if $\wtil{\bV}\hat{\rho}$ has a Morse type singularity of index zero or two 
$($resp. a Morse type singularity of index one$)$ at $p$. 
\end{prop}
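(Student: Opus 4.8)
The plan is to reduce the statement about the focal surface $\what{C}$ to a statement about the signed area density function $\lambda^{\what{C}}$ and then apply the criteria for cuspidal lips and cuspidal beaks recorded in Fact \ref{fact:crit}(4)--(5). The key enabling facts are already in place: by Lemma \ref{lem:corank} the principal vector $\wtil{\bV}$ serves as a null vector field $\eta^{\what{C}}$ of $\what{C}$ near $p$, by Proposition \ref{prop:frontness} the map $\what{C}$ is a front at $p$ (so that Fact \ref{fact:crit} genuinely applies), and by \eqref{eq:lambda2} the signed area density $\lambda^{\what{C}}$ is a nonvanishing-factor multiple of $\wtil{\bV}\hat{\rho}$, namely $\lambda^{\what{C}}=(\wtil{\bV}\hat{\rho})|\y|/(\wtil{V}_1\hat{\kappa}|h\times f_v|)$.

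First I would translate the hypotheses of Fact \ref{fact:crit}(4)--(5) into conditions on $\wtil{\bV}\hat{\rho}$. The criteria require $d\lambda^{\what{C}}(p)=0$ together with a sign condition on $\det(\cH_{\lambda^{\what{C}}})(p)$ and, for the cuspidal beaks case, the extra condition $\eta^{\what{C}}\eta^{\what{C}}\lambda^{\what{C}}(p)\neq0$. Since $\lambda^{\what{C}}=\Theta\cdot(\wtil{\bV}\hat{\rho})$ with $\Theta=|\y|/(\wtil{V}_1\hat{\kappa}|h\times f_v|)$ a nonvanishing $C^\infty$ function near $p$, and since $\wtil{\bV}\hat{\rho}$ vanishes at $p$ by Lemma \ref{lem:sing-C}, the product rule gives $d\lambda^{\what{C}}(p)=\Theta(p)\,d(\wtil{\bV}\hat{\rho})(p)$. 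Thus $d\lambda^{\what{C}}(p)=0$ is equivalent to $d(\wtil{\bV}\hat{\rho})(p)=0$, i.e. to the vanishing of both first partials of $\wtil{\bV}\hat{\rho}$; under the standing hypothesis $\bV(\wtil{\bV}\hat{\rho})(p)=0$ one still has to account for the other independent direction, but from the computation in Lemma \ref{lem:swallow} the two partials of $\wtil{\bV}\hat{\rho}$ at $p$ are controlled by $e'(0)$ and $\hat{\rho}_{uv}(p)$, and for a cuspidal butterfly $e'(0)=0$ (by Fact \ref{fact:crit}(3), since $\eta\lambda(p)=\eta\eta\lambda(p)=0$), so $\bV(\wtil{\bV}\hat{\rho})(p)=0$ forces $\hat{\rho}_{uv}(p)=0$ as well, giving $d(\wtil{\bV}\hat{\rho})(p)=0$ and hence $d\lambda^{\what{C}}(p)=0$ automatically.

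The central computation is then the comparison of Hessians at the critical point $p$. Because $\wtil{\bV}\hat{\rho}$ vanishes at $p$ and $\Theta$ is nonvanishing, the standard fact that the Hessian of a product $\Theta\cdot g$ at a common critical point of the product (where $g(p)=0$ and $dg(p)=0$) equals $\Theta(p)\,\hess(g)(p)$ shows $\cH_{\lambda^{\what{C}}}(p)=\Theta(p)\,\cH_{\wtil{\bV}\hat{\rho}}(p)$. Consequently $\det(\cH_{\lambda^{\what{C}}})(p)=\Theta(p)^2\det(\cH_{\wtil{\bV}\hat{\rho}})(p)$, so the sign of the determinant and the Morse index are preserved (the positive factor $\Theta(p)^2$ does not affect the index). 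This identifies: $\det(\cH_{\lambda^{\what{C}}})(p)>0$ iff $\wtil{\bV}\hat{\rho}$ has a Morse singularity of index $0$ or $2$, matching the cuspidal lips criterion; and $\det(\cH_{\lambda^{\what{C}}})(p)<0$ iff $\wtil{\bV}\hat{\rho}$ has a Morse singularity of index $1$, matching the cuspidal beaks criterion. For the beaks case I would additionally verify $\eta^{\what{C}}\eta^{\what{C}}\lambda^{\what{C}}(p)=\wtil{\bV}\wtil{\bV}\lambda^{\what{C}}(p)\neq0$; by the same product-rule argument this equals $\Theta(p)\,\wtil{\bV}\wtil{\bV}(\wtil{\bV}\hat{\rho})(p)$, and a nondegenerate (index one) Morse singularity of $\wtil{\bV}\hat{\rho}$ whose Hessian is genuinely indefinite guarantees this second directional derivative along the null direction is nonzero.

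The main obstacle I anticipate is bookkeeping rather than conceptual: one must be careful that the factor $\Theta$ is truly nonvanishing and smooth near $p$ (which follows since $\wtil{V}_1=\lambda\what{N}-\hat{\kappa}\what{G}\neq0$ near $p$, $\hat{\kappa}(p)\neq0$, and $\y,h\times f_v$ are nonvanishing), and that the Morse-index statement for $\wtil{\bV}\hat{\rho}$ is invariant under multiplication by the positive function $\Theta^2$ and under the coordinate change implicit in passing between the null frame $\{\bV,\wtil{\bV}\}$ and the adapted coordinates $(u,v)$. The delicate point is confirming the vanishing of the off-critical directional derivative for the beaks criterion, i.e. that $\eta^{\what{C}}\eta^{\what{C}}\lambda^{\what{C}}(p)\neq0$, which I would extract directly from the index-one nondegeneracy of $\cH_{\wtil{\bV}\hat{\rho}}(p)$ together with the explicit expression for $\wtil{\bV}\wtil{\bV}(\wtil{\bV}\hat{\rho})(p)$ already computed in the proof of Theorem \ref{thm:singularity-C}.
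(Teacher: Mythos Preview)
Your approach is essentially the paper's: both reduce to Fact~\ref{fact:crit} via the proportionality $\lambda^{\what{C}}=\Theta\cdot(\wtil{\bV}\hat{\rho})$ from \eqref{eq:lambda2}, use Lemma~\ref{lem:swallow} to get $d\lambda^{\what{C}}(p)=0$, and invoke the computation in the proof of Theorem~\ref{thm:singularity-C} for the $\eta^{\what{C}}\eta^{\what{C}}\lambda^{\what{C}}(p)\neq0$ condition. Your Hessian comparison $\cH_{\lambda^{\what{C}}}(p)=\Theta(p)\,\cH_{\wtil{\bV}\hat{\rho}}(p)$ is the step the paper leaves implicit.

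There is one slip worth flagging. You write that ``a nondegenerate (index one) Morse singularity of $\wtil{\bV}\hat{\rho}$ whose Hessian is genuinely indefinite guarantees this second directional derivative along the null direction is nonzero.'' That implication is false in general: an indefinite $2\times2$ quadratic form has isotropic directions, and nothing a priori rules out $\wtil{\bV}(p)$ lying on that null cone. The correct argument---which you also allude to but conflate with the Morse hypothesis---is that $\wtil{\bV}\wtil{\bV}(\wtil{\bV}\hat{\rho})(p)\neq0$ follows \emph{solely} from the cuspidal-butterfly assumption on $f$: one has $e'(0)=0$ and $e''(0)\neq0$, and substituting into the explicit formula from the proof of Theorem~\ref{thm:singularity-C} gives
\[
\wtil{\bV}\wtil{\bV}(\wtil{\bV}\hat{\rho})(p)=-e''(0)\,\hat{\kappa}(p)\,\wtil{V}_1(p)^2\,\hat{\rho}_v(p)\neq0,
\]
independently of any Morse condition on $\wtil{\bV}\hat{\rho}$. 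So the $\eta^{\what{C}}\eta^{\what{C}}\lambda^{\what{C}}(p)\neq0$ hypothesis in the beaks criterion is automatic here, and the proposition reduces cleanly to the sign of $\det(\cH_{\wtil{\bV}\hat{\rho}})(p)$. This is exactly how the paper argues, in two lines.
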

\begin{proof}
By Lemma \ref{lem:swallow}, $p$ is a degenerate singular point of $\what{C}$. 
Moreover, by Theorem \ref{thm:singularity-C}, the condition $\wtil{\bV}\wtil{\bV}(\wtil{\bV}\hat{\rho})\neq0$ holds at $p$ 
when $p$ is a cuspidal butterfly of the initial front $f$. 
Thus we have the assertion by Fact \ref{fact:crit}. 
\end{proof}

\begin{rem}
Let $p$ be a singular point of the second kind of a front $f$ neither a swallowtail nor a cuspidal butterfly. 
Assume that the focal surface $\what{C}$ has a degenerate singularity at $p$. 
Then $\what{C}$ cannot have a cuspidal lips/beaks at $p$ 
because $\wtil{\bV}\wtil{\bV}(\wtil{\bV}\hat{\rho})=0$ holds at $p$ automatically. 
\end{rem}

\begin{ex}\label{ex:cbf-cbk}
Let $f\colon\R^2\to\R^3$ be a $C^\infty$ map given by 
\begin{equation*}
f(u,v)=\left(\frac{1}{6} \left(u^3-6 v\right),
\frac{1}{72} \left(u^6-9 u^4-12 u^3 v+72 u v+36 v^2\right),
-\frac{1}{20} u^2 \left(u^3-10 v\right)\right).
\end{equation*}
Then the set of singular points of $f$ is $S(f)=\{v=0\}$. 
Moreover, $f$ has a cuspidal butterfly at the origin. 
By direct calculations, we see that $(\wtil{\bV}\hat{\rho})_u=(\wtil{\bV}\hat{\rho})_v=0$ hold at the origin. 
Further, it follows that $(\wtil{\bV}\hat{\rho})_{uu}=-1,(\wtil{\bV}\hat{\rho})_{uv}=0$ and $(\wtil{\bV}\hat{\rho})_{vv}=6$ at the origin. 
This implies that $\wtil{\bV}\hat{\rho}$ has a Morse type singularity of index one at the origin. 
Thus the focal surface $\what{C}$ of $f$ has a cuspidal beaks at the origin (see Figure \ref{fig:cbf_beaks}). 
\begin{figure}[htbp]
  \begin{center}
    \begin{tabular}{c}

      % 1
      \begin{minipage}{0.3\hsize}
        \begin{center}
          \includegraphics[clip, width=3cm]{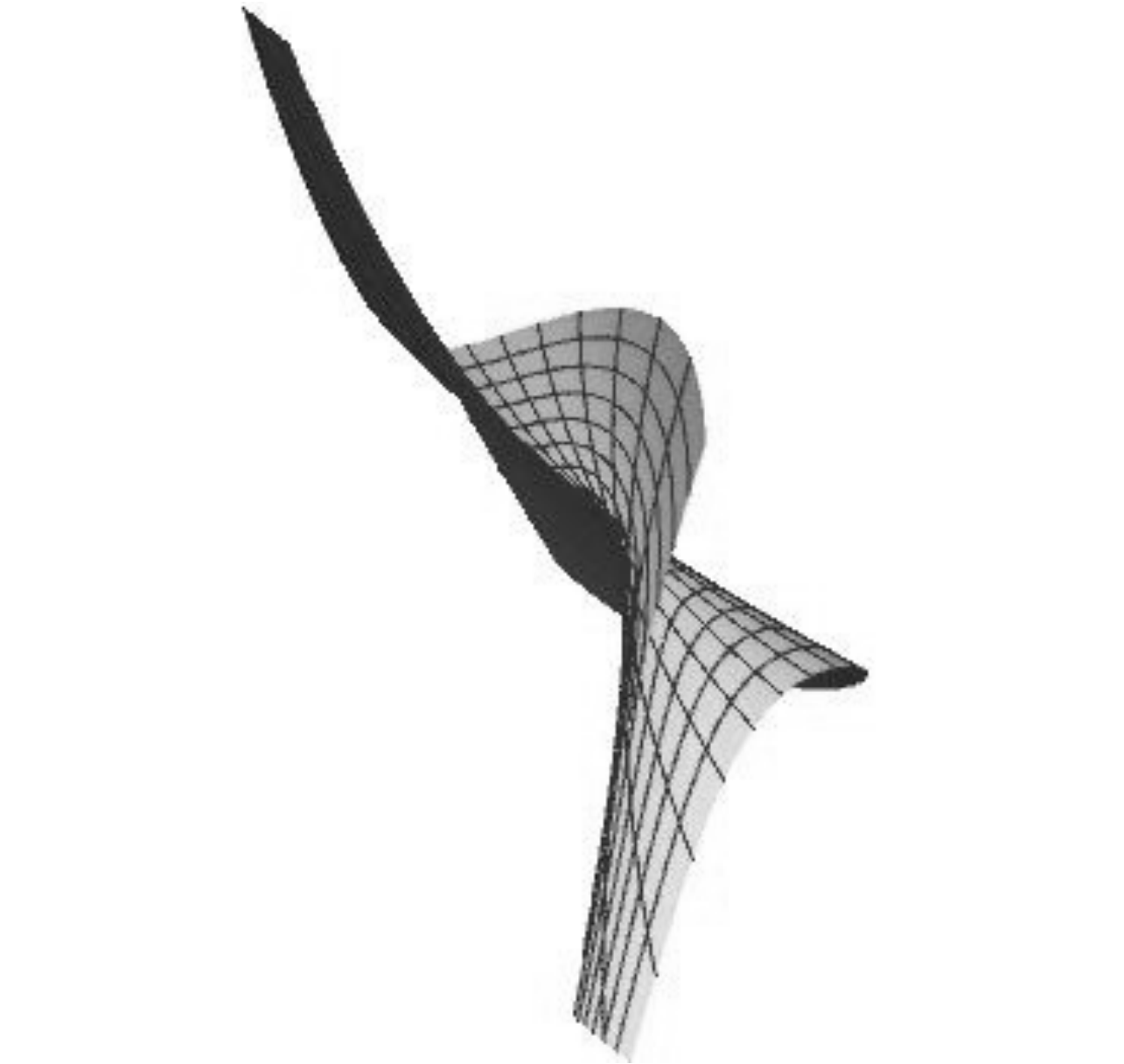}
        \end{center}
      \end{minipage}

      \begin{minipage}{0.3\hsize}
        \begin{center}
          \includegraphics[clip, width=3cm]{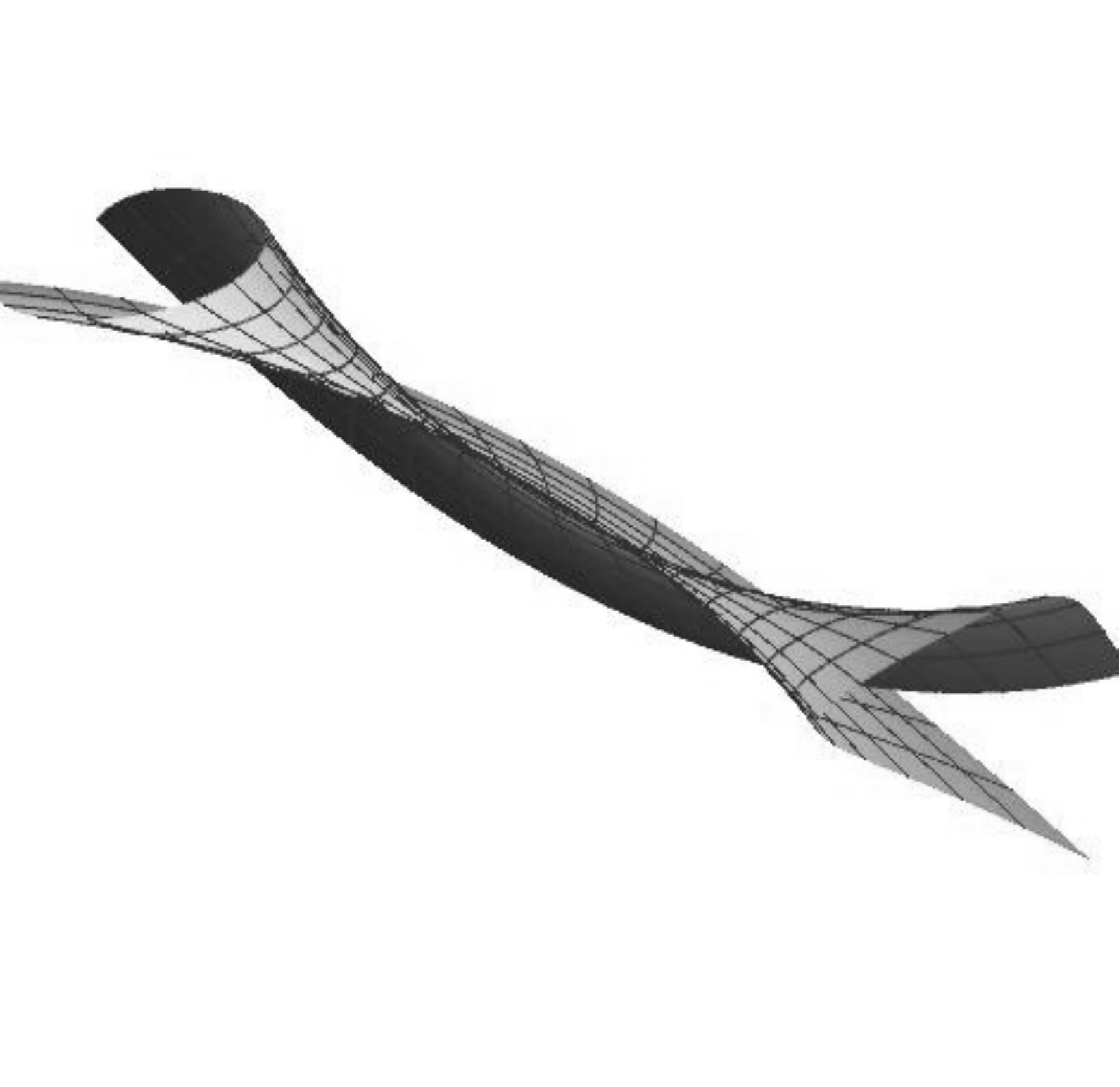}
        \end{center}
      \end{minipage}

\begin{minipage}{0.3\hsize}
        \begin{center}
          \includegraphics[clip, width=2.5cm]{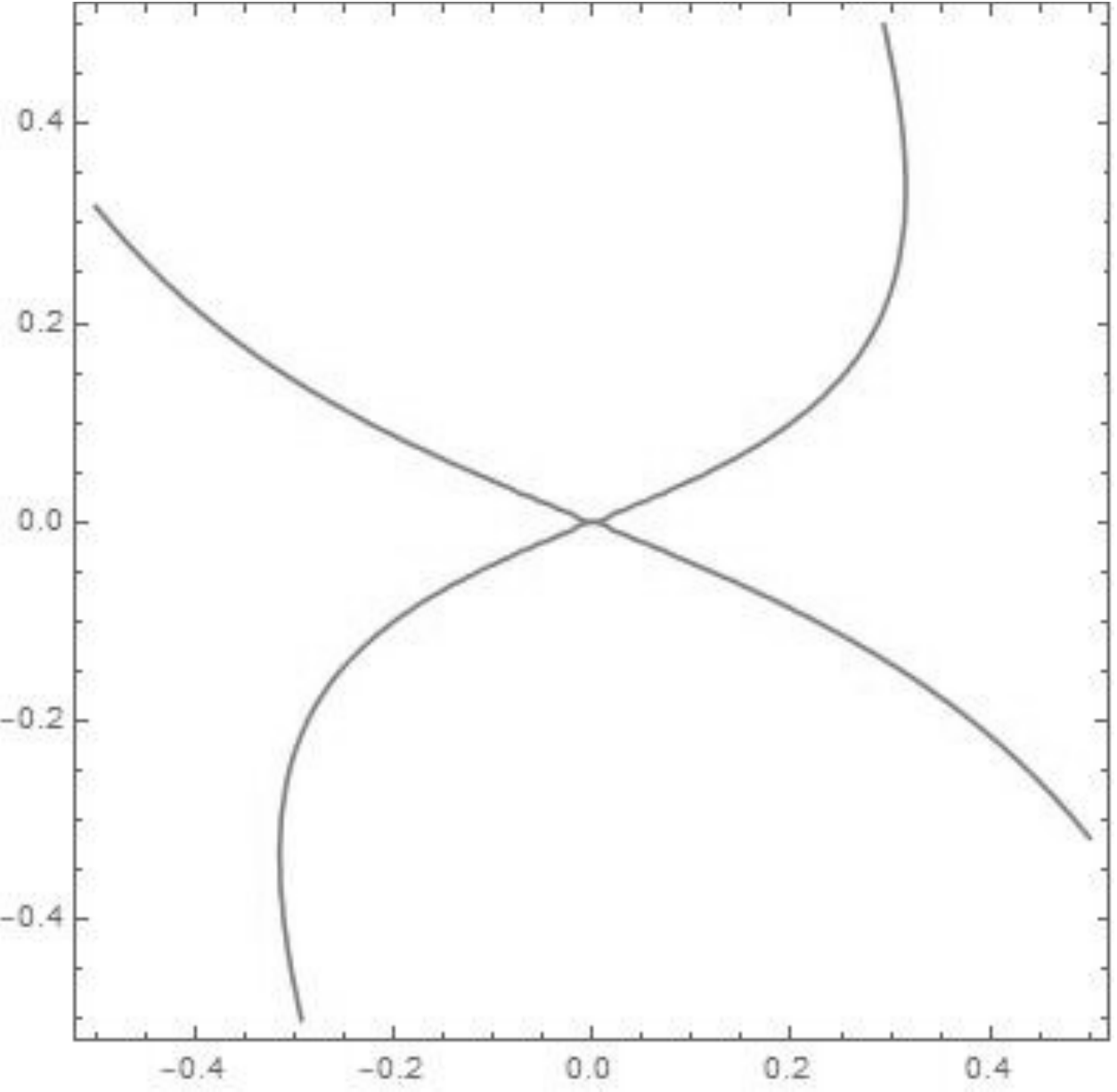}
        \end{center}
      \end{minipage}

    \end{tabular}
    \caption{Images of $f$ (left) and its focal surface $\what{C}$ (center) of Example \ref{ex:cbf-cbk}.  
    The figure in the right-hand side shows the singular set of $\what{C}$ around the origin.}
    \label{fig:cbf_beaks}
  \end{center}
\end{figure}
\end{ex}

\begin{ex}\label{ex:cbf-clp}
Let $f\colon\R^2\to\R^3$ be a $C^\infty$ map given by 
\begin{equation*}
f(u,v)=\left(\frac{1}{6} \left(u^3-6 v\right),
\frac{1}{72} \left(-u^6-9 u^4+12 u^3 v+72 u v-36 v^2\right),
-\frac{1}{20} u^2 \left(u^3-10 v\right)\right).
\end{equation*}
This map $f$ is a front and the origin is a cuspidal butterfly. 
By direct computations, it holds that $(\wtil{\bV}\hat{\rho})_u=(\wtil{\bV}\hat{\rho})_v=0$, 
$(\wtil{\bV}\hat{\rho})_{uu}=-1,(\wtil{\bV}\hat{\rho})_{uv}=0$ and $(\wtil{\bV}\hat{\rho})_{vv}=-6$ at the origin. 
This means that $\wtil{\bV}\hat{\rho}$ has a Morse type singularity of index two at the origin. 
Thus the focal surface $\what{C}$ has a cuspidal lips at the origin (see Figure \ref{fig:cbf_lips}). 
\begin{figure}[htbp]
  \begin{center}
    \begin{tabular}{c}

      % 1
      \begin{minipage}{0.33\hsize}
        \begin{center}
          \includegraphics[clip, width=3cm]{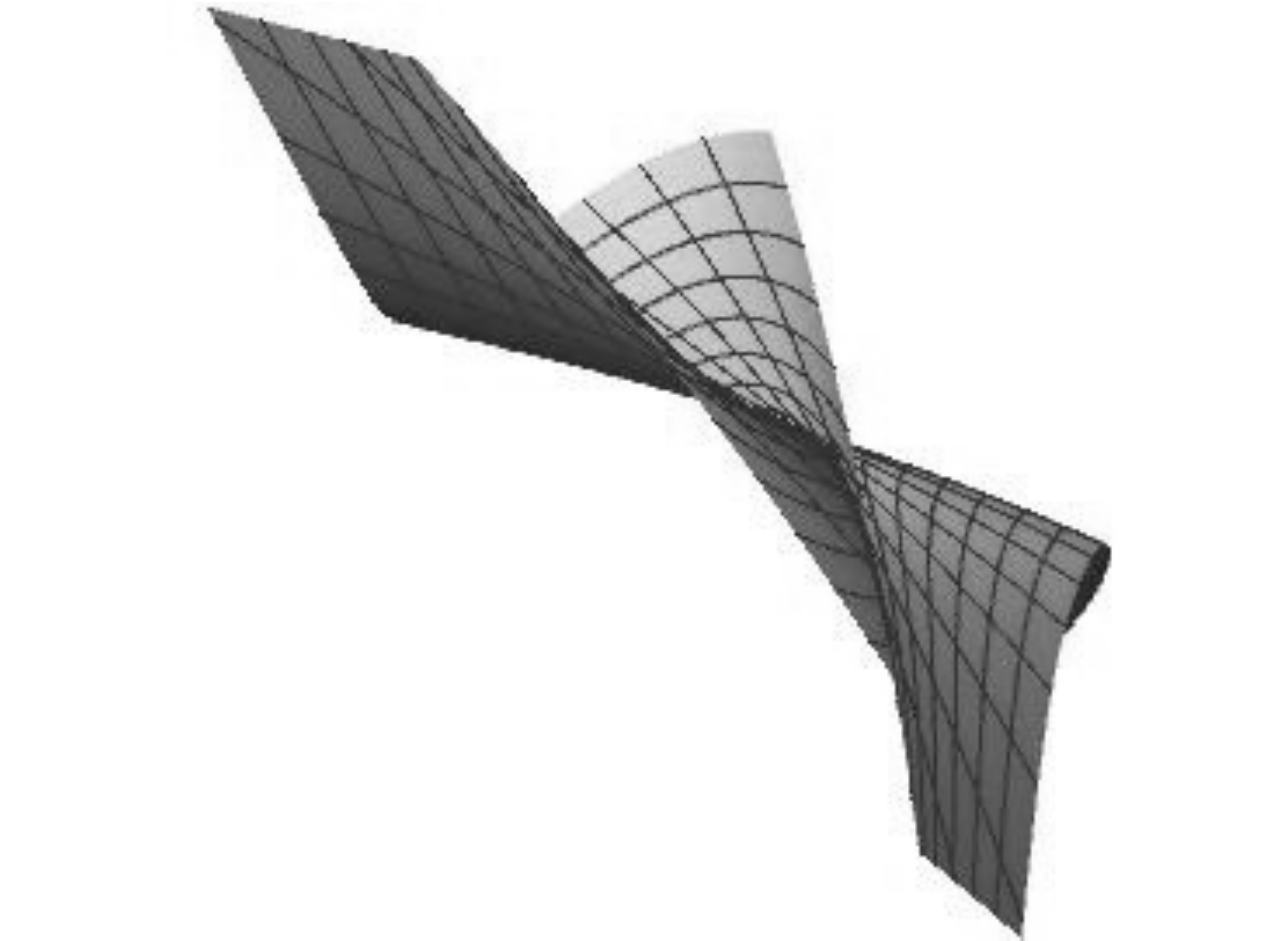}
        \end{center}
      \end{minipage}

      \begin{minipage}{0.33\hsize}
        \begin{center}
          \includegraphics[clip, width=3.5cm]{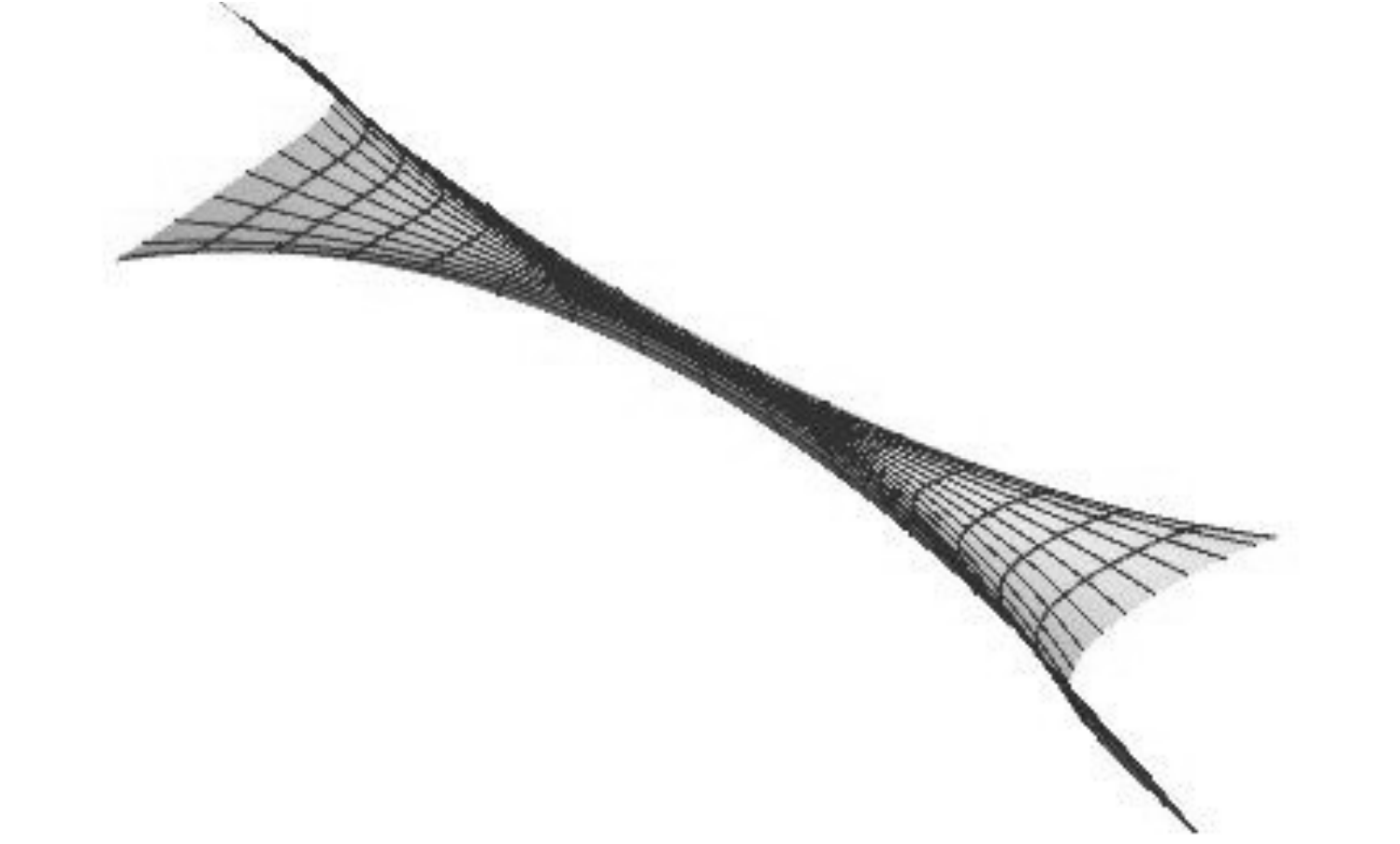}
        \end{center}
      \end{minipage}

    \end{tabular}
    \caption{The initial front $f$ (left) and its focal surface $\what{C}$ (right) of Example \ref{ex:cbf-clp}.}
    \label{fig:cbf_lips}
  \end{center}
\end{figure}

\end{ex}

\subsection{Contact between singular curves}
We now consider contact properties of singular curves of the initial front and its focal surface. 
First we start with the definition of contact between regular curves on the plane (cf. \cite[Page 74]{ifrt}). 
\begin{dfn}
Let $\alpha:I\ni t\mapsto\alpha(t)\in\R^2$ be a regular plane curve. 
Let $\beta$ be an another plane curve defined by the zero set of a smooth function $F:\R^2\to\R$. 
We say that {\it $\alpha$ has $(k+1)$-point contact at $t_0\in I$ with $\beta$} 
if the composite function $c(t)=F(\alpha(t))$ satisfies 
$$c(t_0)=c'(t_0)=\cdots=c^{(k)}(t_0)=0,\quad c^{(k+1)}(t_0)\neq0,$$
where $c^{(i)}=d^ic/dt^i$ $(1\leq i\leq k+1)$. 
Moreover, $\alpha$ has {\it at least $(k+1)$-point contact at $t_0$ with $\beta$} if 
the function $c(t)=F(\alpha(t))$ satisfies 
$$c(t_0)=c'(t_0)=\cdots=c^{(k)}(t_0)=0.$$ 
In this case, we call the integer $k$ the {\it order} of contact.
\end{dfn}

\begin{prop}\label{prop:contact}
Let $f\colon\Sig\to\R^3$ be a front and $p$ a singular point of the second kind. 
Assume that the curve given by $(\wtil{\bV}\hat{\rho})^{-1}(0)$ is regular at $p$. 
Then the singular curve $\gamma$ for $f$ through $\gamma(0)=p$ has 
$1$-point contact $($resp. $2$-point contact$)$ at $p$ with $(\wtil{\bV}\hat{\rho})^{-1}(0)$ if and only if 
$p$ is a swallowtail $($resp. cuspidal butterfly$)$ of $f$. 
Moreover, $\what{C}$ has a cuspidal edge $($resp. swallowtail$)$ at $p$. 
\end{prop}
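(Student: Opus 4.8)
The plan is to work in a strongly adapted coordinate system $(U;u,v)$ centered at $p$, so that the singular curve of $f$ is the $u$-axis, parametrized by the regular plane curve $\gamma(u)=(u,0)$, and $e(0)=0$ in the null vector field $\eta=\partial_u+e(u)\partial_v$. By Lemma \ref{lem:sing-C} the curve $(\wtil{\bV}\hat{\rho})^{-1}(0)$ is exactly the singular set $S(\what{C})$, so I take $F=\wtil{\bV}\hat{\rho}$ as its defining function. The whole statement then reduces to determining the order of vanishing at $u=0$ of the contact function $c(u)=F(\gamma(u))=(\wtil{\bV}\hat{\rho})(u,0)$ and matching it against the criteria of Fact \ref{fact:crit}.

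First I would carry out the central computation: evaluating $\wtil{\bV}\hat{\rho}=\wtil{V}_1\hat{\rho}_u+\wtil{V}_2\hat{\rho}_v$ along the $u$-axis. Since $\hat{\rho}=\lambda/\hat{\kappa}$ vanishes identically on $\{v=0\}$, its tangential derivative gives $\hat{\rho}_u(u,0)=0$; and from \eqref{eq:p-vec2} the second component satisfies $\wtil{V}_2(u,0)=e(u)\wtil{V}_1(u,0)$. Hence
\[
c(u)=e(u)\,\wtil{V}_1(u,0)\,\hat{\rho}_v(u,0)=-e(u)\,\what{G}(u,0)\,\hat{\lambda}(u,0),
\]
where $\lambda=v\hat{\lambda}$ with $\hat{\lambda}(p)\neq0$. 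The coefficient $-\what{G}(u,0)\hat{\lambda}(u,0)$ is nonvanishing near $p$, so $c$ and $e$ have the same order of vanishing at $u=0$: one reads off $c'(0)\neq0\iff e'(0)\neq0$, and, when $e'(0)=0$, $c''(0)\neq0\iff e''(0)\neq0$. Thus $\gamma$ has $1$-point (resp. $2$-point) contact with $(\wtil{\bV}\hat{\rho})^{-1}(0)$ if and only if $e$ vanishes to order exactly one (resp. two) at $0$.

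Next I would translate the vanishing order of $e$ into the singularity type of $f$ through Fact \ref{fact:crit}. Because $\lambda(u,0)\equiv0$, every pure $u$-derivative of $\lambda$ vanishes on the axis, so with $e(0)=0$ a direct expansion of $\eta=\partial_u+e(u)\partial_v$ yields $\eta\lambda(p)=0$, $\eta\eta\lambda(p)=e'(0)\lambda_v(p)$, and, when $e'(0)=0$, $\eta\eta\eta\lambda(p)=e''(0)\lambda_v(p)$. Since $d\lambda(p)\neq0$ by non-degeneracy and $\lambda_v(p)\neq0$, Fact \ref{fact:crit}(2),(3) give that $p$ is a swallowtail of $f$ precisely when $e'(0)\neq0$ and a cuspidal butterfly precisely when $e'(0)=0$, $e''(0)\neq0$. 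Combining this with the previous paragraph establishes the two equivalences of the proposition.

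Finally, for the ``moreover'' clause I would invoke Theorem \ref{thm:singularity-C} together with the identifications that $\wtil{\bV}$ is a null vector field of $\what{C}$ (Lemma \ref{lem:corank}), that $\lambda^{\what{C}}$ is proportional to $\wtil{\bV}\hat{\rho}$ by \eqref{eq:lambda2}, and that $\what{C}$ is a front at $p$ (Proposition \ref{prop:frontness}). In the swallowtail case $\wtil{\bV}(\wtil{\bV}\hat{\rho})(p)$ is a nonzero multiple of $e'(0)$, which is the cuspidal edge criterion for $\what{C}$; in the cuspidal butterfly case $\wtil{\bV}(\wtil{\bV}\hat{\rho})(p)=0$ while $\wtil{\bV}\wtil{\bV}(\wtil{\bV}\hat{\rho})(p)$ is a nonzero multiple of $e''(0)$, and the hypothesis that $(\wtil{\bV}\hat{\rho})^{-1}(0)$ is regular forces $\bV(\wtil{\bV}\hat{\rho})(p)\neq0$, so Theorem \ref{thm:singularity-C}(2) identifies $\what{C}$ as a swallowtail. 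The main obstacle is the central factorization $c(u)=-e(u)\what{G}(u,0)\hat{\lambda}(u,0)$: once the two reductions $\hat{\rho}_u(u,0)=0$ and $\wtil{V}_2(u,0)=e(u)\wtil{V}_1(u,0)$ are in place, everything else is bookkeeping against Fact \ref{fact:crit}, so the crux is organizing these axis computations cleanly.
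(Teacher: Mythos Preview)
Your proposal is correct and follows essentially the same route as the paper: both arguments reduce the contact function to $c(u)=e(u)\psi(u)$ with $\psi(0)\neq0$ (your $\psi=-\what{G}\hat{\lambda}$ is just the paper's $\psi=\wtil{V}_1\hat{\rho}_v$ made explicit), read off the vanishing order of $c$ from that of $e$, and finish via Fact~\ref{fact:crit} and Theorem~\ref{thm:singularity-C}. You supply more detail than the paper—explicitly computing $\eta^k\lambda(p)$ in terms of $e^{(k-1)}(0)\lambda_v(p)$ and noting that the regularity hypothesis forces $\bV(\wtil{\bV}\hat{\rho})(p)\neq0$ in the cuspidal butterfly case—but the core idea is identical.
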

\begin{proof}
Let us take a strongly adapted coordinate system $(U;u,v)$ centered at $p$. 
Define the function $c= \wtil{\bV}\hat{\rho}\circ\gamma\colon (-\eps,\eps)\to\R$. 
Since $\wtil{\bV}=\wtil{V}_1(\partial_u+e(u)\partial_v)$ and $\hat{\rho}=0$ on the $u$-axis, 
the function $c$ can be written as 
$$c(u)=e(u)\wtil{V}_1(u,0)\hat{\rho}_v(u,0)=e(u)\psi(u)\quad (\psi(u)=\wtil{V}_1(u,0)\hat{\rho}_v(u,0)).$$
By the Leibniz rule, one can see that 
\begin{equation}\label{eq:diff-g}
c^{(n)}(u)=\sum_{k=0}^n\binom{n}{k}~ e^{(k)}(u)\psi^{(n-k)}(u),
\end{equation}
where $\binom{n}{k}$ are binomial coefficients. 
Thus we have the assertions from Theorem \ref{thm:singularity-C}.
\end{proof}

The right-hand side of Figure \ref{fig:cbf_sw} shows both the sets of singular points of 
$f$ and $\what{C}$ of Example \ref{ex:cbf-sw}. 
We notice that these curves have $2$-point contact at the origin. 
This point actually corresponds to the swallowtail of $\what{C}$.

\subsection{Behavior of the Gaussian curvature of $\what{C}$}
Let $p$ be a singular point of the second kind of a front $f$. 
Then the focal surface $\what{C}$ given by \eqref{eq:focal} has a front-type singularity at $p$ by Theorem \ref{thm:singularity-C}. 
Thus the Gaussian curvature $K^{\what{C}}$ of $\what{C}$ may diverge at $p$ (\cite{suy-front,msuy}). 
However, $K^{\what{C}}$ might be rationally bounded at $p$. 
Hence we consider the condition for $K^{\what{C}}$ to be rationally bounded.

\begin{thm}\label{thm:r-bounded-focal}
Let $f\colon\Sig\to\R^3$ be a front and $p$ a singular point of the second kind, 
where $\Sig$ is a domain in $\R^2$. 
Let $\what{C}$ be the focal surface of $f$ associated to the unbounded principal curvature 
given by \eqref{eq:focal}. 
Suppose that $\what{C}$ has a non-degenerate singularity at $p$. 
Then the Gaussian curvature $K^{\what{C}}$ of $\what{C}$ is rationally bounded at $p$ 
if and only if $p$ is a sub-parabolic point of $f$. 
Moreover, the limiting normal curvature of $\what{C}$ vanishes at $p$. 
\end{thm}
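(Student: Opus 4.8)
The plan is to apply Fact~\ref{fact:rational-bounded} to the focal surface $\what{C}$ itself and then translate the resulting condition on $\what{C}$ into the sub-parabolic condition on $f$. By Proposition~\ref{prop:frontness}, $\what{C}$ is a front, and by hypothesis $p$ is a non-degenerate singular point of $\what{C}$, so Fact~\ref{fact:rational-bounded} applies: $K^{\what{C}}$ is rationally bounded at $p$ if and only if the limiting normal curvature of $\what{C}$ vanishes at $p$, equivalently if and only if the Gauss map $\e_2=\y/|\y|$ of $\what{C}$ (Lemma~\ref{lem:normal}) is singular at $p$. It therefore suffices to prove that $\e_2$ is singular at $p$ precisely when $\eta\kappa(p)=0$, because by Proposition~\ref{prop:subpara0} this last condition is exactly the sub-parabolicity of $p$; the ``moreover'' statement is then the equivalent condition~(2) of Fact~\ref{fact:rational-bounded} holding in this case.

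First I would fix a strongly adapted coordinate system $(U;u,v)$ centered at $p$. By Lemma~\ref{lem:corank} we have $\what{C}_u(p)=0$ and $\rank d\what{C}_p=1$, with image direction spanned by $\what{C}_v(p)=f_v+(1/\mu_c)\nu$. From the computation in Proposition~\ref{prop:frontness}, $(\e_2)_u(p)$ is a nonzero multiple of $f_v-\mu_c\nu$; using $\what{G}(p)=1$ one checks that $(\e_2)_u(p)$ and $\what{C}_v(p)$ are nonzero and mutually orthogonal, and that both are orthogonal to $\e_2(p)$ (which is parallel to $h(p)$). Hence $(\e_2)_u(p)$ and $\what{C}_v(p)$ form an orthogonal basis of $T_{\e_2(p)}\bS^2$. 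Since $d(\e_2)_p$ has rank at least one, it is degenerate exactly when $(\e_2)_v(p)$ lies in the span of $(\e_2)_u(p)$, which---because $(\e_2)_u(p)\perp\what{C}_v(p)$---is equivalent to the single scalar equation $\inner{(\e_2)_v(p)}{\what{C}_v(p)}=0$.

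Next I would compute this scalar. Because $\inner{\y(p)}{\what{C}_v(p)}=-\hat\kappa\what{F}(p)=0$, one has $\inner{(\e_2)_v(p)}{\what{C}_v(p)}=\inner{\y_v(p)}{\what{C}_v(p)}/|\y(p)|$. I would expand $\y_v$ from the definition of $\y$ in \eqref{eq:vec-xy} using the derivative formulas of Lemma~\ref{lem:derivative} for $h_v$ and $f_{vv}$, together with $\what{F}_v-B=\inner{h}{f_{vv}}$ and Lemma~\ref{lem:ab}. After substituting $\lambda(p)=0$, $\what{F}(p)=0$, $\what{G}(p)=1$ and $1/\mu_c=\lambda_v/\hat\kappa$, the $f_{vv}$-term and the entire $h$-component drop out (the latter since $\inner{h}{\what{C}_v}(p)=0$), and the scalar reduces to a combination of $\hat\kappa$, $\lambda_v$, $\what{M}(p)$ and $\inner{h}{f_{vv}}(p)$---that is, to the fundamental quantities of \eqref{eq:fundamentals} and their first derivatives at $p$.

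The main obstacle is the concluding identification: I must show that this combination is a nonzero constant multiple of $\kappa_u(p)=\eta\kappa(p)$. This is where the geometric content lies, and it is delicate because the coordinates are strongly adapted only at $p$, so the bounded principal curvature must be tracked along the whole singular curve. Concretely, I would use $\kappa(u,0)=\kappa_\nu(u)$ together with $\kappa_\nu(p)=\what{N}(p)$ from Lemma~\ref{lem:relation}, and differentiate the relations $\lambda\kappa+\hat\kappa=2\lambda H$ and $\kappa\hat\kappa=\lambda K$ together with the curvature expressions \eqref{eq:front_GH} along the $u$-axis, so as to rewrite $\hat\kappa\,\inner{h}{f_{vv}}(p)$ and $\lambda_v\what{M}(p)$ through $\kappa_u(p)$ and obtain $\inner{(\e_2)_v(p)}{\what{C}_v(p)}=c\,\kappa_u(p)$ with $c\neq0$. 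Once this proportionality is in hand, $\e_2$ is singular at $p$ if and only if $\eta\kappa(p)=0$; combining with Fact~\ref{fact:rational-bounded} and Proposition~\ref{prop:subpara0} gives the desired equivalence and, in that case, the vanishing of the limiting normal curvature of $\what{C}$.
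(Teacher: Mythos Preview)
Your approach is correct and follows essentially the same strategy as the paper---reduce, via Fact~\ref{fact:rational-bounded}, to showing that the Gauss map $\e_2$ of $\what{C}$ is singular at $p$ precisely when $\eta\kappa(p)=0$---but your reduction to a single scalar is a genuine simplification over the paper's route.

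The paper computes the full determinant $\det(\y_u,\y_v,\y)(p)$ and, after expanding with Lemmas~\ref{lem:derivative} and~\ref{lem:ab}, obtains $\det(\y_u,\y_v,\y)(p)=-\hat{\kappa}^3\what{E}\lambda_v\what{N}_u(p)$. Your observation that $(\e_2)_u(p)$ is a nonzero multiple of $f_v-\mu_c\nu$ (from Proposition~\ref{prop:frontness}) and is therefore orthogonal to $\what{C}_v(p)=f_v+\mu_c^{-1}\nu$ lets you replace this $3\times3$ determinant by the single inner product $\inner{\y_v}{\what{C}_v}(p)$. Carrying out your computation one finds $\inner{\y_v}{\what{C}_v}(p)=\hat{\kappa}\inner{h}{f_{vv}}(p)-2\lambda_v\what{M}(p)$, exactly the combination you anticipate. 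The identification with $\kappa_u(p)$ then proceeds just as in the paper: from $f_u=vh-e(u)f_v$ one gets $f_{uvv}(p)=2h_v(p)$, hence $\what{N}_u(p)=2\what{M}(p)-\mu_c\inner{h}{f_{vv}}(p)$, so the scalar equals $-\lambda_v\what{N}_u(p)$; and from $\kappa\hat{\kappa}=\lambda K$ restricted to the $u$-axis one has $\kappa(u,0)=\what{N}/\what{G}$, whence $\kappa_u(p)=\what{N}_u(p)$ because $\what{G}_u(p)=2\inner{f_{uv}}{f_v}(p)=0$ in strongly adapted coordinates. Thus your scalar is $-\lambda_v\,\eta\kappa(p)$ with $\lambda_v(p)\neq0$, and the equivalence follows. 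Your orthogonality trick buys a cleaner calculation; the paper's determinant is more mechanical but requires no preliminary geometric observation. Either way the crux is the identity $\what{N}_u(p)=2\what{M}(p)-\mu_c\inner{h}{f_{vv}}(p)$, which you should state explicitly rather than leave inside the phrase ``rewrite \ldots\ through $\kappa_u(p)$''.
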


\begin{proof}
Let us take a strongly adapted coordinate system $(U;u,v)$ centered at $p$. 
Then we consider the condition that the Gauss map of $\what{C}$ has a singularity at $p$. 
The Gauss map of $\what{C}$ is given by $\e_2=\y/|\y|$, 
where $\y$ is defined by \eqref{eq:vec-xy}. 
Define a function $\Lambda^{\what{C}}\colon U\to\R$ by 
$$\Lambda^{\what{C}}(u,v)=\det((\e_2)_u,(\e_2)_v,\e_2)(u,v).$$
Then zeros of $\Lambda^{\what{C}}$ correspond to singular points of $\e_2$. 
By a direct calculation, $\Lambda^{\what{C}}(p)=0$ is equivalent to 
$\det(\y_u,\y_v,\y)(p)=0$. 
Thus we consider this condition. 
By a direct calculation, we have 
$$\y=-\hat{\kappa}h,\quad 
\y_u=\ast_1 h-\hat{\kappa}h_u+\hat{\kappa}\what{F}_u f_v,\quad 
\y_v=\ast_2 h-\hat{\kappa}h_v-(\lambda_v\what{M}-\hat{\kappa}\what{F}_v)f_v$$
at $p$, where $\ast_i$ $(i=1,2)$ are some constants. 
Therefore one can see that 
\begin{equation*}
\det(\y_u,\y_v,\y)=-\hat{\kappa}\left(\hat{\kappa}^2\det(h_u,h_v,h)
+\hat{\kappa}(\lambda_v\what{M}-\hat{\kappa}\what{F}_v)\det(h_u,f_v,h)
-\hat{\kappa}^2\what{F}_u\det(f_v,h_v,h)\right)
\end{equation*}
holds at $p$. 
By Lemmas \ref{lem:derivative} and \ref{lem:ab}, 
we see that 
$$h_u=\dfrac{\what{E}_u}{2\what{E}}h+(\what{F}_u-\what{E})f_v+\what{L}\nu,\quad 
h_v=\dfrac{\what{E}_v}{2\what{E}}h+Bf_v+\what{M}\nu
$$
holds at $p$, where $B=\inner{h_v}{f_v}$. 
Since $\what{F}=\inner{f_v}{h}$, we have 
$\what{F}_v=\inner{f_{vv}}{h}+B$. 
On the other hand, since $\what{N}=\inner{f_{vv}}{\nu}$, 
it follows that $\what{N}_u=\inner{f_{uvv}}{\nu}+\inner{f_{vv}}{\nu_u}$. 
By $f_u=vh-e(u)f_v$, one can see that $f_{uvv}=2h_v$ holds at $p$. 
Moreover, $\nu_u=-\mu_c h$ holds at $p$ by Lemmas \ref{lem:weingarten2} and \ref{lem:relation}. 
Thus we obtain $\what{N}_u=2\what{M}-\mu_c\inner{f_{vv}}{h}$ at $p$, 
where we used the relation $\inner{h_v}{\nu}=\what{M}$. 
Hence we get 
$$B=\inner{f_{v}}{h_v}=\what{F}_v-\dfrac{2\what{M}-\what{N}_u}{\mu_c}$$
at $p$. 
Therefore, by $\mu_c=(\what{L}/\what{E})(p)$, it follows that 
\begin{align*}
\det(h_u,h_v,h)=\{(\what{F}_u\what{M}-\what{F}_v\what{L})+\what{E}(\what{M}-\what{N})\}\lambda_v,\quad
\det(h_u,f_v,h)=-\what{L}\lambda_v,\quad \det(f_v,h_v,h)=\what{M}\lambda_v
\end{align*}
hold at $p$. 
By the above calculations and $\hat{\kappa}=\mu_c\lambda_v$ at $p$, we have 
$$\det(\y_u,\y_v,\y)=-\hat{\kappa}^3\what{E}\lambda_v\what{N}_u$$
at $p$.

On the other hand, the directional derivative $\wtil{\bV}\kappa$ 
of the bounded principal curvature $\kappa$ in the direction $\wtil{\bV}$ is calculated as 
$$\wtil{\bV}\kappa=-\hat{\kappa}\what{N}_u=-\lambda_v\mu_c\what{N}_u$$
at $p$. 
Thus $\lambda_v\what{N}_u=-\wtil{\bV}\kappa/\mu_c$ holds at $p$, and hence we have 
$$\det(\y_u,\y_v,\y)=\lambda_v^5\mu_c^2(\wtil{\bV}\kappa)$$
at $p$ by $\lambda_v(p)^2=\what{E}(p)$. 
Therefore the Gauss map $\e_2$ of $\what{C}$ has a singularity at $p$ 
if and only if $p$ is a sub-parabolic point of $f$. 
Thus by Fact \ref{fact:rational-bounded}, we have the conclusion.
\end{proof}

Theorem \ref{thm:r-bounded-focal} 
gives geometrical meanings of a sub-parabolic point of a front with a singular point of the second kind. 
As a corollary of this theorem, we have the following relation 
between behavior of the Gaussian curvature of the initial front and of the focal surface. 

\begin{cor}\label{cor:K-rational}
Let $f\colon\Sig\to\R^3$ be a front and $p$ a singular point of the second kind of $f$. 
Suppose that the Gaussian curvature $K$ is either bounded on a sufficiently small neighborhood of $p$ 
or rationally continuous at $p$. 
Then the Gaussian curvature $K^{\what{C}}$ of $\what{C}$ is rationally bounded at $p$. 
\end{cor}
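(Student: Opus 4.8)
The plan is to reduce the statement entirely to Theorem \ref{thm:r-bounded-focal}: once $p$ is known to be a sub-parabolic point of $f$ under either hypothesis, the rational boundedness of $K^{\what{C}}$ is immediate. So the whole argument amounts to checking sub-parabolicity of $p$ in the two cases, after which the conclusion is a formal consequence of the results already established.

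First I would handle the case in which $K$ is rationally continuous at $p$. This is exactly the content of Proposition \ref{prop:subp-rational}, which gives directly that $p$ is a sub-parabolic point of $f$. Next, in the case where $K$ is bounded on a small neighborhood of $p$, I would use the standing assumption that $p$ is a singular point of the second kind and invoke the second assertion of Proposition \ref{prop:subpara}, which again yields that $p$ is sub-parabolic. Hence in both cases the defining condition $\wtil{\bV}\kappa(p)=0$ holds.

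With sub-parabolicity in hand, Theorem \ref{thm:r-bounded-focal} gives at once that $K^{\what{C}}$ is rationally bounded at $p$. Equivalently, through the identity $\det(\y_u,\y_v,\y)(p)=\lambda_v^5\mu_c^2(\wtil{\bV}\kappa)(p)$ established in its proof, the Gauss map $\e_2=\y/|\y|$ of $\what{C}$ is singular at $p$, and Fact \ref{fact:rational-bounded} translates this into the rational boundedness of $K^{\what{C}}$.

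The only point I expect to require care is the non-degeneracy hypothesis underlying Theorem \ref{thm:r-bounded-focal} and Fact \ref{fact:rational-bounded}, both of which are stated for non-degenerate singular points of $\what{C}$. This is the main obstacle, and I would address it either by restricting to that situation---which is automatic, for instance when $p$ is a swallowtail of $f$ by Lemma \ref{lem:swallow}---or, in the degenerate case, by deducing rational boundedness directly from the vanishing of $\det(\y_u,\y_v,\y)(p)$ together with the structure of $\e_2$. No computation beyond those already appearing in the cited results is needed, so the analytic content is entirely inherited from the earlier propositions and theorem.
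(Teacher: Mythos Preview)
Your proposal is correct and follows essentially the same route as the paper: reduce to sub-parabolicity via Proposition~\ref{prop:subp-rational} (rationally continuous case) and Proposition~\ref{prop:subpara}(2) (bounded case), then invoke Theorem~\ref{thm:r-bounded-focal}. The paper's proof does exactly this, only with the two cases in the opposite order, and it applies Theorem~\ref{thm:r-bounded-focal} directly without addressing the non-degeneracy hypothesis you flag---so your extra caution there goes beyond what the paper does, but the core argument is identical.
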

\begin{proof}
First we assume that $K$ is bounded near $p$. 
By Proposition \ref{prop:subpara}, $p$ is a sub-parabolic point of $f$. 
Thus by Theorem \ref{thm:r-bounded-focal}, $K^{\what{C}}$ is rationally bounded at $p$.

Next we suppose that $K$ is rationally continuous at $p$. 
By Proposition \ref{prop:subp-rational}, $p$ is a sub-parabolic point of $f$. 
Thus it holds that the Gaussian curvature $K^{\what{C}}$ of $\what{C}$ 
is rationally bounded at $p$ by Theorem \ref{thm:r-bounded-focal} again. 
Therefore we have the assertion.
\end{proof}

%%%%%APPENDIX%%%%% If needed
%\appendix

%\bibliographystyle{amsplain}
%\bibliographystyle{amsalpha}
%\nocite{*}
%\bibliography{reference}

\end{document}